\theoremstyle{plain}
\newtheorem{theorem}{Theorem}[section]
\newtheorem{lemma}[theorem]{Lemma}
\newtheorem{corollary}[theorem]{Corollary}
\newtheorem{proposition}[theorem]{Proposition}
\theoremstyle{definition}
\newtheorem{definition}[theorem]{Definition}
\theoremstyle{remark}
\newtheorem{remark}[theorem]{Remark}
\newtheorem{example}[theorem]{Example}
\newtheorem*{proofoftheorem}{Proof of Theorem}
\newtheorem*{ak}{Acknowledgements}
\DeclareSymbolFont{AMSb}{U}{msb}{m}{n}
\DeclareMathSymbol{\N}{\mathalpha}{AMSb}{"4E}
\DeclareMathSymbol{\R}{\mathalpha}{AMSb}{"52}
\DeclareMathSymbol{\Z}{\mathalpha}{AMSb}{"5A}
\DeclareMathSymbol{\D}{\mathalpha}{AMSb}{"44}
\DeclareMathSymbol{\s}{\mathalpha}{AMSb}{"53}
\newcommand{\sX}{\scriptscriptstyle{X}}
\newcommand{\sM}{\scriptscriptstyle{M}}
\newcommand{\sN}{\scriptscriptstyle{N}}
\newcommand{\sk}{\scriptscriptstyle{\VK}}
\newcommand{\VK}{k}
\newcommand{\sZ}{\scriptscriptstyle{Z}}
\newcommand{\sY}{\scriptscriptstyle{Y}}
\newcommand{\sscr}{\scriptscriptstyle}
\newcommand{\X}{X}
\newcommand{\dx}{\de_{\sX}}
\newcommand{\sK}{\scriptscriptstyle{K}}
\DeclareMathOperator{\frs}{\mathfrak{s}}
\DeclareMathOperator{\frc}{\mathfrak{c}}
\DeclareMathOperator{\vol}{vol}
\DeclareMathOperator{\supp}{supp}
\DeclareMathOperator{\de}{d}
\DeclareMathOperator{\m}{m}
\DeclareMathOperator{\ric}{ric}
\DeclareMathOperator{\diam}{diam}
\DeclareMathOperator{\Ent}{Ent}
\DeclareMathOperator{\DyCo}{DyCpl}
\title{On the geometry of metric measure spaces with variable curvature bounds.}
\author{Christian Ketterer}
\email{christian.ketterer@math.uni-freiburg.de}
\begin{document}

\maketitle
\begin{abstract}
Motivated by a classical comparison result of J. C. F. Sturm we introduce a curvature-dimension condition $CD(k,N)$ for general metric measure spaces 
and variable lower curvature bound $\VK$. 
In the case of non-zero constant lower curvature our approach coincides  with the celebrated condition that was proposed by
K.-T. Sturm in \cite{stugeo2}.
We prove several geometric properties as sharp Bishop-Gromov volume growth comparison or a sharp generalized Bonnet-Myers theorem (Schneider's Theorem). Additionally, our curvature-dimension condition is stable with respect to measured Gromov-Hausdorff convergence, and it is stable with respect to tensorization of finitely many metric measure spaces provided a non-branching condition is assumed.
\end{abstract}
\noindent
\tableofcontents

\section{Introduction}
Metric measure spaces with generalized lower Ricci curvature bounds have become objects of interest in various fields of mathematics.
Since Lott, Sturm and Villani introduced the so-called curvature-dimension condition $CD(K,N)$ for $K\in\mathbb{R}$ 
and $N\in [1,\infty]$ via displacement convexity of the Shanon and Reny entropy on the 
$L^2$-Wasserstein space 
\cite{lottvillani, stugeo1, stugeo2}
a rather complete picture of the geometric and analytic properties of these spaces has been developed (e.g. \cite{rajala2, giglisplittingshort, agsriemannian, agsbakryemery, erbarkuwadasturm}). Their approach is based on and inspired by
recent fundamental breakthroughs in the theory of optimal transport (e.g. \cite{brenier, mccann, CMS, otto}).

However, the condition of lower bounded Ricci curvature is also very retrictive. 
Neither non-compact smooth Riemannian manifolds do admit a global lower curvature bound in general, nor does Hamilton's Ricci flow in general. 
Moreover, one cannot exceed the information that is encoded by the constant curvature bound. Therefore the regime of results is limited.
However, in the context of smooth Riemannian manifolds variable lower Ricci curvature play an importan role. For instance, one can deduce 
refined statements for the geometry of the space, e.g. \cite{veysseire, aubry, pengfeiwang, petersenwei, petersensprouse, schneider}.
Therefore, it seems natural to ask for a suitable extension of the theory of Lott, Sturm and Villani. 
For dimension independent situations a definition is proposed by Sturm in \cite{sturmvariable}.
But to deduce finer geometric results one also must bring a dimension bound into play.

In this article we will focus on the finite dimensional case and introduce a curvature-dimension condition $CD(\VK,N)$ for metric measure spaces $(X,\de_{\sX},\m_{\sX})$ 
where the lower curvature bound $\VK:X\rightarrow\mathbb{R}$ is a lower semi-continuous function. Before we describe our approach,
let us remind that Lott, Sturm and Villani define the curvature-dimension condition $CD(0,N)$ of an arbitrary metric measure space $(X,\de_{\sX},\m_{\sX})$ via displacement convexity for the $N$-R\'eny entropy functional
\begin{align*}
S_N(\varrho\m_{\sX})=-\int_X\varrho^{1-\frac{1}{N}}d\m_{\sX}.
\end{align*}
(The definitions in \cite{lottvillani} and in \cite{stugeo2} slightly differ.)
In \cite{stugeo2} Sturm gave a definition of $CD(K,N)$ for general $K\in\mathbb{R}$ via so-called
\textit{distorted displacement convexity} (see also \cite{viltot}). 
This approach involves the concept of modified volume distortion coefficients $\tau_{\sk,\sN}^{\sscr{(t)}}(\theta)$ that 
do not come from a linear ODE but are motivated by the geometry of Riemannian manifolds. They
capture the geometric fact that Ricci curvature of a tangent vector $v$ is the mean value of sectional curvatures of planes intersecting in $v$. 
Roughly speaking, non-zero curvature only happens perpendicular to $v$.
Our idea is to introduce generalized volume distortion coefficients as follows. We define
\begin{align*}
\tau_{\sk_{\gamma},\sN}^{\sscr{(t)}}(\theta)=t^{\frac{1}{N}}\left[\sigma_{\sk_{\gamma},\sN-1}^{\sscr{(t)}}(\theta)\right]^{\frac{N-1}{N}}
\end{align*}
where $\VK_{\gamma}(t\theta)=\VK\circ\gamma(t)$, $\gamma:[0,1]\rightarrow X$ is a constant speed geodesic and $\sigma_{\sk_{\gamma},\sN}^{\sscr{(t)}}(\theta)$ is the solution of 
\begin{eqnarray}\label{jaffa}
&u''(t)+\frac{\VK(\gamma(t))}{N}\theta^2u=0&
\end{eqnarray}
with $u(0)=0\mbox{ and }u(1)=1$ where $\theta=|\dot{\gamma}|$.
We remark, that in the case of constant curvature $\VK=K$ this yields 
\begin{align*}
\sigma_{\sK,\sN}^{\sscr{(t)}}(|\dot{\gamma}|)=\frac{\sin_{\sK/\sN}(t|\dot{\gamma}|)}{\sin_{\sK/\sN}(|\dot{\gamma}|)}
\end{align*}
that is precisely the definition of Sturm in \cite{stugeo2}. 

A key property of the distortion coefficients is their monotonicity w.r.t. $\VK$ which is a particular consequence of a classical comparison result of J. C. F. Sturm for 1-dimensional Sturm-Liouville type operators. 
\begin{theorem}[J. C. F. Sturm's comparison theorem]
Let $\kappa,\kappa':[a,b]\rightarrow \mathbb{R}$ be continuous function such that ${\kappa}'\geq {\kappa}\mbox{ on } [a,b]$ and $\frs_{\kappa'}>0$ on $(a,b]$. Then $\mathfrak{s}_{\kappa}\geq \mathfrak{s}_{{\kappa}'}$ on $[a,b]$.
\end{theorem}
\noindent
$\frs_{\kappa}$ is a solution of (\ref{jaffa}) with $\VK/N=\kappa$ and $\gamma(t)=t$, an initial condition $u(0)=0$ and $u'(0)=1$.
The theorem is well-known in the context of Riemannian manifolds and smooth Jacobi field calculus. Its geometric counterpart is the celebrated Rauch comparison theorem.

In particular, from generalized distortion coefficients we also obtain a new characterization of the differential inequality 
$
u''\leq -\VK u
$
(see Proposition \ref{central}) that appears naturally in connection with lower curvature bounds on smooth Riemannian manifolds.  

Then our curvature-dimension condition takes the following form. Let $(X,\de_{\sX},\m_{\sX})$ be a metric measure space as in Definition \ref{assmms} and assume for simplicity that
for $\m_{\sX}^2$-a.e. pair $(x,y)$ there exists a unique geodesic. Then $(X,\de_{\sX},\m_{\sX})$ satisfies the condition $CD(\VK,N)$ for
$N\geq 1$ and a lower semi-continuous function $\VK:X\rightarrow \mathbb{R}$ if for any pair of absolutely continuous probability measures $\mu_0$ and $\mu_1$ on $X$ there exists a dynamical optimal coupling
$\Pi\in\mathcal{P}(\mathcal{G}(X))$ such that
\begin{align*}
\varrho_t(\gamma_{t})^{-\frac{1}{N}}\geq \tau_{\sk^-_{\gamma},\sN'}^{\sscr{(1-t)}}(|\dot{\gamma}|)\varrho_0(\gamma_{0})^{-\frac{1}{N}}+\tau_{\sk^+_{\gamma},\sN}^{\sscr{(t)}}(|\dot{\gamma}|)\varrho_1(\gamma_{0})^{-\frac{1}{N}}.
\end{align*}
for all $t\in[0,1]$ and $\Pi$-a.e. geodesic $\gamma$. Here $\VK^+_{\gamma}=\VK_{\gamma}$ and $\VK^-_{\gamma}=\VK_{\gamma^-}$ where $\gamma^-$ is the time reverse reparametrization of $\gamma$. 
$\varrho_t$ is the density of the push-forward of $\Pi$ under the map $\gamma\mapsto \gamma_{t}$. If we replace $\tau_{\sk,\sN}$ by $\sigma_{\sk/\sN}$ 
we say $X$ satisfies the reduced curvature-dimension condition $CD^*(k,N)$.
Let us emphasize that we do not assume any non-branching assumption for the metric measure space in general, and we also do not assume a quadratic Cheeger as in \cite{agsriemannian} 
or an a priori lower curvature bound as in \cite{sturmvariable}.

This is the first part of two articles where we investigate the geometric and and analytic consequences of our curvature-dimension condition. The main results in this article are
\begin{itemize}
 \item The condition $CD(\VK,N)$ for $N\in [1,\infty)$ implies $CD(\VK,\infty)$ in the sense of \cite{sturmvariable} (Proposition \ref{htht}).
 \item For Riemannian manifolds the curvature-dimension condition $CD(\VK,N)$ is equivalent to a lower bound $\VK$ for the Ricci curvature and an upper bound $N$ for the dimension (Theorem \ref{smoothcase}). 
 \item A generalized Brunn-Minkowski theorem and a generalized Bishop-Gromov comparison theorm hold (Theorem \ref{bm}, Theorem \ref{useful}, Theorem \ref{bg}). The latter results in particular yields a local volume doubling property and finite Hausdorff dimension. 
 \item A generalized Bonnet-Myers theorem (Theorem \ref{stheorem}). This is a non-smooth version of a result by R. Schneider \cite{schneider} (see also \cite{ambrose, galloway}). It states that if the curvature doesn't decreasing too quickly for large distances from
 a point, then the space is compact. There are also similar statements in the context of smooth Finsler manifolds and for the Bakry-Emery Ricci tensor in a smooth context \cite{mihairadu, zhsh}. 
 \item The curvature-dimension condition is stable with respect to measured Gromov-Hausdorff convergence (Theorem \ref{stabilitytheorem}). In particular, it implies that any family of compact Riemannian manifolds with uniform upper bound for the dimension, 
 uniform upper bound for the diameter and equi-continuous lower Ricci curvature bounds that are uniformily bounded from below admit a converging subsequence such that the lower Ricci curvature bounds converge 
 uniformily to a continuous function that is
 a lower Ricci curvature bound for the limit space.
 \item The curvature-dimension condition is stable under tensorization of finitely many metric measure spaces provided a non-branhing assumption is satisfied (Theorem \ref{tensorization}).
 \item The reduced curvature-dimension condition admits a globalization property (Theorem \ref{globalization}).
\end{itemize}
In the forthcoming addendum to this article we also investigate variants of the condition $CD(\VK,N)$. Namely, 
following \cite{erbarkuwadasturm, ohtmea} we introduce
an entropic curvature-dimension condition and a measure contraction property as well as an $EVI_{\VK,N}$-condition for gradient flows on metric spaces where $\VK$ is a lower semi-continuous function. 
We will investigate their relation to each other and also to the reduced curvature-dimension condition presented in this paper. 
Provided stronger regularity assumptions we establish various equivalences and consequences.

Additionally, considering the recent approach of Cavalletti and Mondino in \cite{cavmon} to prove isoperimetric inequalities and various other functional inequalities in the context 
of non-branching $CD$-spaces with constant curvature bound
our appoach seems very well adapted for tranforming their ideas to a non-constant curvature setting.

In the second section of this paper we will present necessary preliminaries of optimal transport, Wasserstein calculus and geometry of metric spaces.
In section 3 we will introduce generalized distortion coefficients and we will present a new characterization of $\kappa u$-convexity of a function $u$.
In section 4 we give the definition of $CD(\VK,N)$ in the general context of metric measure spaces, and in particular we will prove that is consistent with Sturm's definition in \cite{sturmvariable}.
The topic of section 5 will be the geometric consequences of the curvature-dimension condition. 
In section 6, 7 and 8 we will prove the stability property, the tensorization property under a branching assumption, and the globalization property of the reduced curvature-dimension condition, respectively. 
\begin{ak}
The author would like to thank Yu Kitabeppu for his interest and many fuitful discussions. Major parts of this work have been written 
during the Junior Trimester Programm "Optimal Transport" at the Hausdorff Institute of Mathematics (HIM) in Bonn. The author also wish to thank HIM for 
the excellent working condition and the
stimulation and open atmosphere. 
\end{ak}

\section{preliminaries}

\begin{definition}[Metric measure space]\label{assmms}
Let $(X,\de_{\sX})$ be a complete and separable metric space, 
and let $\m_{\sX}$ be a locally finite Borel measure on $(X,\de_{\sX})$.
That is,  for all $x\in X$ there exists $r>0$ such that $\m_{\sX}(B_r(x))\in(0,\infty)$.
Let $\mathcal{O}_{\sX}$ and $\mathcal{B}_{\sX}$ be the topology of open sets and the family of Borel sets, respectively.
A triple $(X,\de_{\sX},\m_{\sX})$ will be called \emph{metric measure space}. 
We assume that $\m_{\sX}(X)\neq 0$.
\end{definition}
$(X,\de_{\sX})$ is called \textit{length space} 
if $\de_{\sX}(x,y)=\inf \mbox{L} (\gamma)$ for all $x,y\in X$, 
where the infimum runs over all rectifiable curves $\gamma$ in $X$ connecting $x$ and $y$. 
$(X,\de_{\sX})$ is called \textit{geodesic space} 
if every two points $x,y\in X$ are connected by a curve $\gamma$ such that $\de_{\sX}(x,y)=\mbox{L}(\gamma)$.
Distance minimizing curves of constant speed are called \textit{geodesics}. 
A length space, which is complete and locally compact, is a geodesic space and proper (\cite[Theorem 2.5.23 ]{bbi}).
Rectifiable curves always admit a reparametrization proportional to arc length, and therefore become Lipschitz curves.
In general, we assume that a geodesic $\gamma:[0,1]\rightarrow X$ is parametrized proportional to its length, and
the set of all such geodesics $\gamma:[0,1]\rightarrow X$ is denoted with $\mathcal{G}(X)$. 
The set of all Lipschitz curves $\gamma:[0,1]\rightarrow X$ parametrized proportional to arc-length is denoted with $\mathcal{LC}(X)$.
$(X,\de_{\sX})$ is called \textit{non-branching} 
if for every quadruple $(z,x_0,x_1,x_2)$ of points in $X$ for which $z$ is a midpoint of $x_0$ and $x_1$ as well as of $x_0$ and $x_2$, it follows that $x_1=x_2$.
\smallskip

$\mathcal{P}(X)$ denotes the space of probability measures on $(X,\mathcal{B}_{\sX})$, and
$\mathcal{P}_2(\X,\dx)=:\mathcal{P}_2(X)$ denotes the \textit{$L^2$-Wasserstein space} of probability measures $\mu$ on $(\X,\mathcal{B}_{\sX})$ with finite second moments, which means that $\int_\X\dx^2(x_0,x)d\mu(x)<\infty$
for some (hence all) $x_0\in \X$. The \textit{$L^2$-Wasserstein distance} $\de_{W}(\mu_0,\mu_1)$ between two probability measures
$\mu_0,\mu_1\in\mathcal{P}_2(\X)$ is defined as
\begin{equation}\label{wassersteindist}
\de_{W}(\mu_0,\mu_1)=\sqrt{\inf_{\pi}\int_{\X\times \X}\dx^2(x,y)\,d\pi(x,y)
}.
\end{equation}
Here the infimum ranges over all \textit{couplings} of $\mu_0$ and $\mu_1$, 
i.e. over all probability measures on $\X\times \X$ with marginals $\mu_0$ and $\mu_1$. $(\mathcal{P}_2(\X),\de_{W})$ is a 
complete separable metric space. The subspace of $\m_{\sX}$-absolutely continuous measures is denoted by $\mathcal{P}_2(\X,\m_{\sX})=:\mathcal{P}_2(\m_{\sX})$. 
%
%
%
%
%
A minimizer of (\ref{wassersteindist}) always exists and is called \textit{optimal coupling} between $\mu_0$ and $\mu_1$. 
\smallskip

A probability measure $\Pi$ on $\mathcal{G}(X)$ is called \emph{dynamical optimal transference plan} if and only if the probability measure $(e_0,e_1)_*\Pi$ on $\X\times \X$ is an optimal coupling of the probability measures
$(e_0)_*\Pi$ and $(e_1)_*\Pi$ on $\X$.
Here and in the sequel $e_t:\Gamma(\X)\to \X$ for $t\in[0,1]$ denotes the evaluation map $\gamma\mapsto \gamma_t$. An absolutely continuous curve $\mu_t$ in $\mathcal{P}_2(X,\m_{\sX})$ is a geodesic 
if and only if there is a dynamical optimal
transference plan $\Pi$ such that $(e_t)_*\Pi=\mu_t$. We write $\DyCo(\mu_0,\mu_1)$ for the set of dynamcial optimal transference plans between $\mu_0$ and $\mu_1$.
\smallskip

Let us recall the notion of \textit{Markov kernel}. Let $(Y,\de_{\sY})$ be a separable and complete metric space. A Markov kernel is a map $Q:Y\times \mathcal{B}_{\sY}\rightarrow [0,1]$ with the following properties. 
$Q(y,\cdot)$ is a probability measure for each $y\in Y$. The function $Q(\cdot,A)$ is measurable for each $A\in \mathcal{B}_{\sX}$.
\smallskip

\begin{lemma}
For each pair $\mu_0, \mu_1\in \mathcal{P}_2(X)$ there exists a dynamical optimal coupling $\Pi$ such that 
\begin{align*}
\de_{W}(\mu_0,\mu_1)^2=\int \de_{\sX}(\gamma(0),\gamma(1))d\Pi(\gamma).
\end{align*}
and there exist Markov kernels $\Pi_{x_0,x_1}$, $\Pi_{x_0}$ and $\Pi_{x_1}$ such that
\begin{align*}
d\Pi(\gamma)=d\Pi_{x_0,x_1}(\gamma)d\pi(x_0,x_1)=d\Pi_{x_0}(\gamma)d\mu_{0}(x_0)=d\Pi_{x_1}(\gamma)d\mu_{1}(x_1)
\end{align*}
where $(e_0,e_1)_{\star}\Pi=:\pi$.
\end{lemma}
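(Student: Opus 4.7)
The plan is to first construct an optimal coupling $\pi \in \mathcal{P}(X \times X)$ of $\mu_0$ and $\mu_1$, then lift it to a probability measure on the geodesic space $\mathcal{G}(X)$ via a Borel selection of geodesics, and finally invoke the disintegration theorem for Polish spaces to produce the three Markov kernels.

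Existence of an optimal coupling $\pi$ realizing $\de_{W}(\mu_0,\mu_1)^2 = \int \de_{\sX}(x_0,x_1)^2 d\pi(x_0,x_1)$ is a standard fact: the set of couplings is tight and weakly closed, and the cost functional $\pi \mapsto \int \de_{\sX}^2 d\pi$ is lower semi-continuous. For the lift, I would first observe that $\mathcal{G}(X)$, endowed with the uniform metric inherited from $C([0,1],X)$, is a Polish space, since $(X,\de_{\sX})$ is complete and separable and $\mathcal{G}(X)$ is a closed subset of $C([0,1],X)$. The endpoint map $(e_0,e_1):\mathcal{G}(X) \to X \times X$ is continuous, and because $(X,\de_{\sX})$ is a geodesic space its fibers $(e_0,e_1)^{-1}(x_0,x_1)$ are non-empty and closed for every pair $(x_0,x_1)$.

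Next, I would apply the Kuratowski--Ryll-Nardzewski measurable selection theorem to the multifunction $F:X\times X \rightrightarrows \mathcal{G}(X)$ given by $F(x_0,x_1) = (e_0,e_1)^{-1}(x_0,x_1)$. Non-emptiness and closedness of its values have just been noted, and weak measurability follows from continuity of $(e_0,e_1)$. The theorem yields a Borel map $G:X\times X \to \mathcal{G}(X)$ with $(e_0,e_1)\circ G = \mathrm{id}$. Setting $\Pi := G_{*}\pi$ one has $(e_0,e_1)_{*}\Pi = \pi$, so $\Pi \in \DyCo(\mu_0,\mu_1)$, and the identity $\de_{W}(\mu_0,\mu_1)^2 = \int \de_{\sX}(\gamma_0,\gamma_1)^2 d\Pi(\gamma)$ is immediate by a change of variables.

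For the disintegration, I would apply the disintegration theorem for Borel probability measures on Polish spaces (see e.g.\ Dellacherie--Meyer, or the version in Ambrosio--Gigli--Savaré): pushing $\Pi$ forward by the Borel map $(e_0,e_1)$ yields $\pi$, and the theorem produces a $\pi$-a.e.\ uniquely determined Borel family $\{\Pi_{x_0,x_1}\}$ of probability measures on $\mathcal{G}(X)$ concentrated on $(e_0,e_1)^{-1}(x_0,x_1)$ with $d\Pi(\gamma) = d\Pi_{x_0,x_1}(\gamma)\, d\pi(x_0,x_1)$. The analogous disintegrations along the maps $e_0$ and $e_1$ produce the kernels $\Pi_{x_0}$ and $\Pi_{x_1}$, after noting that $(e_i)_{*}\Pi = \mu_i$. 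The step I expect to require the most care is the measurable selection of geodesics, since in the absence of a non-branching assumption the geodesic between two points is not unique; Kuratowski--Ryll-Nardzewski handles this cleanly once one verifies that the multifunction $F$ has closed, non-empty values and measurable graph, and this is precisely what the geodesic and Polish structure provide.
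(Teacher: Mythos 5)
Your argument follows the same route the paper takes: the paper's proof simply cites Villani's \emph{Optimal Transport: Old and New} for the existence of a dynamical optimal coupling (which there is constructed precisely by a measurable selection of geodesics over an optimal coupling) and invokes regular conditional probabilities for the kernels, and you have unpacked both steps in detail. The disintegration part is fine. One point in the selection step is stated too casually, though: you claim that weak measurability of the multifunction $F(x_0,x_1)=(e_0,e_1)^{-1}(x_0,x_1)$ ``follows from continuity of $(e_0,e_1)$.'' For $U\subset\mathcal{G}(X)$ open, $\{(x_0,x_1):F(x_0,x_1)\cap U\neq\emptyset\}=(e_0,e_1)(U)$, which is only an analytic (Suslin) subset of $X\times X$, not in general Borel, since the continuous image of an open set between Polish spaces need not be Borel. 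Kuratowski--Ryll-Nardzewski applied with the Borel $\sigma$-algebra therefore does not directly apply. The standard fix, used in Villani's proof, is to invoke the Jankov--von Neumann uniformization theorem (or KRN over the universal completion), which yields a universally measurable, hence $\pi$-measurable, selector $G$; this is exactly enough to form $G_{*}\pi$. (If one already knew $X$ to be proper, $\mathcal{G}(X)$ would be $\sigma$-compact and $(e_0,e_1)(U)$ would be Borel, but at this point the lemma holds for an arbitrary complete separable $X$.) Finally, note that the square is missing in the displayed formula of the statement as printed; you correctly restore $\de_{\sX}(\gamma_0,\gamma_1)^2$.
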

\begin{proof}
For the existence of an dynamical optimal coupling, see \cite{viltot}. The existence of the corresponding Markov kernels comes from the existence of regular conditional probability measures.
\end{proof}

\section{$\kappa u$-convexity}\noindent
Let $\kappa:[a,b]\rightarrow \mathbb{R}$ be a continuous function. We study solutions to
\begin{align}\label{ode}
v''+\kappa v=0.
\end{align} 
The generalized $\sin$-functions $\frs_{\kappa}:[a,b]\rightarrow \mathbb{R}$
is the unique solution of (\ref{ode})
such that $\mathfrak{s}_{\kappa}(a)=0$ and $\mathfrak{s}_{\kappa}'(a)=1$.
The generalized $\cos$-function is $\mathfrak{c}_{\kappa}=\frs_{\kappa}'$.
Solutions of (\ref{ode}) depend continuously on the coefficient $\kappa$. More precisely, for each $\epsilon>0$ there exists $\delta>0$ such that $|\kappa-\kappa'|_{\infty}<\delta$ implies $|\frs_{\kappa}-\frs_{\kappa'}|_{\infty}<\epsilon$
where $\kappa,\kappa':[a,b]\rightarrow \mathbb{R}$ are continuous.
If $\gamma(t)=(1-t)a+tb$ and $v:[a,b]\rightarrow \mathbb{R}$ is any solution of (\ref{ode}), then $v\circ \gamma=u:[0,1]\rightarrow \mathbb{R}$ solves 
\begin{align}\label{ode2}
u''+{\kappa}\circ\gamma |\dot{\gamma}|^2u=0.
\end{align}
In particular, $\frs_{\kappa}(\gamma_t)$ solves (\ref{ode2}) with $\frs_{\kappa}(\gamma_0)=0$ and $\frac{d}{dt}|_{t=0}\frs_{\kappa}(\gamma_t)=|\dot{\gamma}(0)|=b-a$.
\smallskip\\
The next theorem is well-known.
\begin{theorem}[J. C. F. Sturm's comparison theorem]
Let $\kappa,\kappa':[a,b]\rightarrow \mathbb{R}$ be continuous function such that ${\kappa}'\geq {\kappa}\mbox{ on } [a,b]$ and $\frs_{\kappa'}>0$ on $(a,b]$. Then $\mathfrak{s}_{\kappa}\geq \mathfrak{s}_{{\kappa}'}$ on $[a,b]$.
\end{theorem}
\noindent
\begin{theorem}[Sturm-Picone oscillation theorem]
Let $\kappa,\kappa':[a,b]\rightarrow \mathbb{R}$ be continuous such that ${\kappa}'\geq {\kappa}\mbox{ on } [a,b]$. Let $u$ and $v$ be solutions of (\ref{ode}) with respect to $\kappa$ and $\kappa'$ respectively. If $u(a)=u(b)=0$
and $u>0$ on $(a,b)$, then either $u=\lambda v$ for some $\lambda>0$ or there exists $x_1\in (a,b]$ such that $v(x_1)=0$.
\end{theorem}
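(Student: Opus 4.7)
The plan is to invoke the classical Picone identity. A direct differentiation, using the ODEs $u'' = -\kappa u$ and $v'' = -\kappa' v$, yields at every point where $v \neq 0$
\begin{equation*}
\frac{d}{dx}\!\left[uu' - \frac{u^{2} v'}{v}\right] = \left(u' - \frac{u v'}{v}\right)^{\!2} + (\kappa' - \kappa)\,u^{2},
\end{equation*}
and the right-hand side is non-negative thanks to the hypothesis $\kappa' \geq \kappa$.

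I would then argue by contradiction, supposing simultaneously that $v$ has no zero in $(a,b]$ and that $u$ is not a positive constant multiple of $v$. Replacing $v$ by $-v$ if necessary (still a solution of the same equation), one may take $v > 0$ on $(a,b]$. The strategy is to integrate the Picone identity from $a + \varepsilon$ to $b$ and then let $\varepsilon \to 0^{+}$. The boundary term at $b$ is zero since $u(b) = 0$, and the boundary term at $a+\varepsilon$ also vanishes in the limit: in the subcase $v(a) > 0$ this is immediate from $u(a)=0$; in the subcase $v(a) = 0$, one uses the linear Taylor expansions $u(x) \sim u'(a)(x-a)$ and $v(x) \sim v'(a)(x-a)$, with $v'(a) \neq 0$ since otherwise $v \equiv 0$, to see that $u^{2} v'/v = O(x-a) \to 0$.

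The integrated right-hand side is therefore zero, and since both of its summands are non-negative, both must vanish identically on $(a,b)$. The vanishing of the first gives $(u/v)' \equiv 0$ on $(a,b)$, so $u = \lambda v$ there for a constant $\lambda$, with $\lambda > 0$ because both $u$ and $v$ are positive on $(a,b)$. This contradicts the assumption that $u$ is not a positive multiple of $v$ (and, equivalently, it forces $u(b) = \lambda v(b) > 0$, contradicting $u(b)=0$), completing the proof by contradiction. The main technical point is the vanishing of the boundary expression $u^{2} v'/v$ at $a$ when $v(a) = 0$, where the quotient is a priori indeterminate; the argument hinges on the fact, coming from uniqueness for linear second-order ODEs, that both $u$ and $v$ have simple zeros there.
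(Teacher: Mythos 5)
The paper does not prove this statement; it is simply cited as the classical Sturm--Picone oscillation theorem, so there is no internal proof to compare against. Your argument via the Picone identity is the standard one and is correct: the identity
\begin{equation*}
\frac{d}{dx}\Bigl[\,uu' - \frac{u^{2} v'}{v}\,\Bigr] \;=\; \Bigl(u' - \frac{u v'}{v}\Bigr)^{2} + (\kappa' - \kappa)\,u^{2}
\end{equation*}
is correctly derived, and your treatment of the a priori indeterminate boundary term $u^{2}v'/v$ at $a$---using that both $u$ and $v$ have simple zeros there by uniqueness for the linear initial value problem---is exactly the step that needs care. One small remark: after the possible replacement $v\mapsto -v$, the conclusion $u=\lambda v$ with $\lambda>0$ does not directly contradict the hypothesis ``$u$ is not a \emph{positive} multiple of the \emph{original} $v$'' (it could mean $u$ is a negative multiple of the original $v$); it is your parenthetical observation $u(b)=\lambda v(b)>0$ versus $u(b)=0$ that always closes the contradiction, independently of the sign flip. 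In fact this shows that, under the stated hypotheses, $v$ must vanish somewhere in $(a,b]$, which is formally stronger than and subsumes the stated disjunction.
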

\begin{definition}[generalized distortion coefficients]\label{distort}
Consider $\kappa:[0,{L}]\rightarrow \mathbb{R}$ that is continuous and $\theta \in (0,L]$. 
Then
\begin{align*}
\sigma_{\kappa}^{\sscr{(t)}}(\theta)=\begin{cases}
\frac{\frs_{{\kappa}}(t\theta)}{\frs_{{\kappa}}(\theta)}& \mbox{ if }\frs_{\kappa}|_{(0,\theta]}>c>0,\\
\infty & \mbox{ otherwise }.
\end{cases}
\end{align*} 
We also define $\pi_{\kappa}=\sup\left\{t\in[0,L]:\frs_{\kappa}(s)>0 \mbox{ for all }s\leq t\right\}.$
If $\sigma_{\kappa}^{\sscr{(t)}}(\theta)<\infty$, $t\mapsto\sigma_{\kappa}^{\sscr{(t)}}(\theta)$ is a solution of 
\begin{align}\label{klebeband}
u''(t)+\kappa(t\theta)\theta^2u(t)=0
\end{align}
satisfying $u(0)=0$ and $u(1)=1$.
\end{definition}
\begin{proposition}\label{monotonicity}
$\sigma^{\sscr{(t)}}_{\kappa}(\theta)$ is non-decreasing with respect to $\kappa:[0,\theta]\rightarrow \mathbb{R}$. More precisely
\begin{align*}
\kappa(x)\geq\kappa'(x) \ \forall x\in [0,\theta]\ \ \mbox{implies} \ \  \sigma^{\sscr{(t)}}_{\kappa}(\theta)\geq \sigma^{\sscr{(t)}}_{\kappa'}(\theta)\ \forall t\in [0,1].
\end{align*}
\end{proposition}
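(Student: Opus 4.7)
The plan is to reduce the inequality to monotonicity of the ratio $\frs_\kappa/\frs_{\kappa'}$ and then verify this via a Wronskian computation. First, I dispose of the case $\sigma_{\kappa'}^{\sscr{(t)}}(\theta) = \infty$. By definition this means $\frs_{\kappa'}$ has a zero in $(0,\theta]$; let $s_0$ be the smallest such zero. Applying the Sturm--Picone oscillation theorem on $[0, s_0]$ to the pair $(\kappa', \kappa)$ (which satisfies $\kappa \geq \kappa'$) with $u = \frs_{\kappa'}$, which is strictly positive on $(0,s_0)$ and vanishes at both endpoints, forces $\frs_\kappa$ to vanish somewhere in $(0, s_0] \subseteq (0,\theta]$. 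Hence $\sigma_\kappa^{\sscr{(t)}}(\theta) = \infty$ as well, and the inequality holds trivially. The symmetric case where only $\sigma_\kappa^{\sscr{(t)}}(\theta)$ blows up is immediate.

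In the remaining case, both $\frs_\kappa$ and $\frs_{\kappa'}$ are strictly positive on $(0,\theta]$. For $t \in (0,1]$, the claimed inequality $\frs_\kappa(t\theta)/\frs_\kappa(\theta) \geq \frs_{\kappa'}(t\theta)/\frs_{\kappa'}(\theta)$ is equivalent to $g(t\theta) \geq g(\theta)$, where $g(s) := \frs_\kappa(s)/\frs_{\kappa'}(s)$; so it suffices to show that $g$ is non-increasing on $(0,\theta]$, the case $t = 0$ being trivial. Set $W := \frs_\kappa'\, \frs_{\kappa'} - \frs_\kappa\, \frs_{\kappa'}'$, so $g' = W/\frs_{\kappa'}^2$. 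Using the ODEs $\frs_\kappa'' = -\kappa\, \frs_\kappa$ and $\frs_{\kappa'}'' = -\kappa'\, \frs_{\kappa'}$ yields $W' = (\kappa' - \kappa)\, \frs_\kappa\, \frs_{\kappa'} \leq 0$, since $\kappa \geq \kappa'$ and both $\frs_\kappa, \frs_{\kappa'} \geq 0$ on $[0,\theta]$. The initial conditions $\frs_\kappa(0) = \frs_{\kappa'}(0) = 0$ give $W(0) = 0$, so $W \leq 0$ on $[0,\theta]$, hence $g' \leq 0$, as desired.

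The computation itself is routine Jacobi-field calculus; the only delicate point is the blow-up case, where one must invoke the Sturm--Picone oscillation theorem rather than J. C. F. Sturm's comparison theorem, since comparison alone does not propagate zeros from $\frs_{\kappa'}$ to $\frs_\kappa$.
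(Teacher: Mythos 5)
Your proof is correct, and it takes a genuinely different route through the finite case than the paper does. The paper argues on the ratio $h(t) = \sigma_{\kappa'}^{\sscr{(t)}}(\theta)/\sigma_{\kappa}^{\sscr{(t)}}(\theta)$: after checking the boundary values $h(1)=1$ and $\lim_{t\downarrow 0}h(t)\leq 1$ (via L'H\^opital and J.~C.~F.~Sturm's comparison theorem), it computes $(f/g)''$ at a hypothetical interior local maximum of $h$ and derives a contradiction. That contradiction requires the \emph{strict} inequality $\kappa > \kappa'$, so the paper must finish with an $\varepsilon$-perturbation ($\kappa \rightsquigarrow \kappa+\varepsilon$) and a uniform-convergence argument to recover the non-strict case. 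Your argument replaces all of this with the observation that the Wronskian $W = \frs_{\kappa}'\frs_{\kappa'} - \frs_{\kappa}\frs_{\kappa'}'$ satisfies $W(0)=0$ and $W' = (\kappa'-\kappa)\frs_{\kappa}\frs_{\kappa'} \leq 0$, so $W\leq 0$ and hence $g=\frs_{\kappa}/\frs_{\kappa'}$ is monotone non-increasing on $(0,\theta]$, from which the claimed inequality is an immediate algebraic reformulation. This is the classical Sturm monotonicity argument: it handles $\kappa\geq\kappa'$ directly (no perturbation needed), works at the level of first derivatives rather than second, and avoids the L'H\^opital step at $t=0$, since only monotonicity of $g$ on the open interval is needed. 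The handling of the blow-up case via Sturm--Picone is identical in both proofs. The two approaches are of comparable length, but yours is self-contained and more elementary; the paper's version, which follows Theorem~14.28 in \cite{viltot}, is chosen presumably to match the second-derivative computations that recur later in the proof of Proposition~\ref{central}.
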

\begin{proof}
Consider $\sigma^{\sscr{(t)}}_{\kappa}(\theta)$ and $\sigma^{\sscr{(t)}}_{{\kappa}'}(\theta)$ for $\kappa$ and ${\kappa}'$ such that $\kappa(t)\geq {\kappa}'(t)$ for all $t\in[0,1]$. 
By Sturm-Picone oscillation theorem $\sigma^{\sscr{(t)}}_{\kappa}(\theta)=\infty$ implies $\sigma^{\sscr{(t)}}_{{\kappa}'}(\theta)=\infty$. 
Hence, we only need to check the case when $\sigma^{\sscr{(t)}}_{\kappa}(\theta)<\infty$ and $\sigma^{\sscr{(t)}}_{{\kappa}'}(\theta)<\infty$.
\smallskip \\
We use the idea of the proof of Theorem 14.28 in \cite{viltot}.
We know that $\sigma_{\kappa}^{\sscr (0)}(\theta)=\sigma_{{\kappa}'}^{\sscr (0)}(\theta)=0$ and $\sigma_{\kappa}^{\sscr (1)}(\theta)=\sigma_{{\kappa}'}^{\sscr (1)}(\theta)=1$. 
Consider ${\sigma_{{\kappa}'}^{\sscr (t)}(\theta)}/{\sigma_{\kappa}^{\sscr (t)}(\theta)}=:h(t)$ for $t\in (0,1]$. We know that
$h(1)=1$ and L'Hospital's rule yields
\begin{align*} 
\lim_{t\downarrow 0}h(t)=\frac{\frs_{\kappa}(\theta)}{\frs_{\kappa'}(\theta)}\lim_{t\downarrow 0}\frac{\frc_{\kappa'}(t\theta)}{\frc_{\kappa}(t\theta)}=\frac{\frs_{\kappa}(\theta)}{\frs_{\kappa'}(\theta)}\leq 1.
\end{align*}
\smallskip
\\
Hence, it is sufficient to check that $h(t)$ has no local maximum in $(0,1)$. 
For this reason, first we assume that $\kappa>{\kappa}'$. 
Set $\sigma_{{\kappa}'}^{\sscr{(t)}}(\theta)=f$ and $\sigma_{\kappa}^{\sscr{(t)}}(\theta)=g$.
Assume there is a maximum in $t_0\in (0,1)$. Hence, $(f/g)'(t_0)=0$ and $(f/g)''(t_0)\leq 0$.
We compute the second derivative of $f/g$.
\begin{align*}
\left(\frac{f}{g}\right)''
&
=\frac{f''g^3-g''fg^2}{g^4}+\frac{2gg'fg'-2g'f'g^2}{g^4}\\
&=-{\kappa'}\theta^2\frac{f}{g}+\kappa\theta^2\frac{f}{g}-\frac{2gg'}{g^2}\frac{f'g-fg'}{g^2}
\end{align*}
and therefore
\begin{align*}
\left(\frac{f}{g}\right)''(t_0)=({\kappa'}(t_0\ \theta)-\kappa(t_0\theta))\theta^2\frac{f(t_0)}{g(t_0)}>0.
\end{align*}
The case where $\kappa\geq {\kappa}'$ follows from that if we replace $\kappa$ by $\kappa+\epsilon$. 
Then $\sigma_{\kappa+\epsilon}^{\sscr{(t)}}(\theta)$ converges uniformly to $\sigma_{\kappa}^{\sscr{(t)}}(\theta)$ if $\epsilon\rightarrow 0$.
\end{proof}
\begin{proposition}\label{continuity}
For $\theta\in(0,L]$ and $t\in(0,1)$ the map $\kappa\in (C([0,L]),|\cdot|_{\infty})\mapsto \sigma_{\kappa}^{\sscr{(t)}}(\theta)\in \mathbb{R}_{\geq 0}\cup\left\{\infty\right\}$ is continuous 
where $\mathbb{R}_{\geq 0}\cup\left\{\infty\right\}$ is equipped with the usual topology.
\end{proposition}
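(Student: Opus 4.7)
The plan is to leverage continuous dependence of solutions to (\ref{ode}) on the coefficient, recorded in the paragraph preceding J.~C.~F.~Sturm's comparison theorem, and then to split the argument according to whether $\sigma_{\kappa}^{\sscr{(t)}}(\theta)$ is finite or infinite.

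First, suppose $\sigma_{\kappa}^{\sscr{(t)}}(\theta) < \infty$, so $\frs_{\kappa} > 0$ on $(0,\theta]$. I would show that for $\kappa'$ sufficiently close to $\kappa$ in sup norm the same positivity persists, so that $\sigma_{\kappa'}^{\sscr{(t)}}(\theta) = \frs_{\kappa'}(t\theta)/\frs_{\kappa'}(\theta)$ is finite, and then continuity follows from continuous dependence of $\frs$ on $\kappa$. To handle the fact that $\frs_{\kappa}(0)=0$, and hence $\frs_{\kappa}$ is not bounded below on $(0,\theta]$, I would split $(0,\theta]=(0,\delta]\cup[\delta,\theta]$: on $[\delta,\theta]$ the continuous function $\frs_{\kappa}$ is strictly positive on a compact set, hence bounded below by some $c>0$, and uniform convergence passes this to $\frs_{\kappa'}\geq c/2$; on $(0,\delta]$, the integral form $v(s)=s-\int_0^s(s-u)\kappa'(u)v(u)\,du$ together with a Gronwall estimate based on $\|\kappa'\|_\infty\leq \|\kappa\|_\infty+1$ yields some uniform $\delta>0$ and the bound $\frs_{\kappa'}(s)\geq s/2$ for $s\in(0,\delta]$.

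Second, suppose $\sigma_{\kappa}^{\sscr{(t)}}(\theta) = \infty$. Then $\frs_{\kappa}$ has a smallest positive zero $s_0\in(0,\theta]$, at which $\frs_{\kappa}'(s_0)<0$ by uniqueness of solutions to (\ref{ode}). If $s_0<\theta$, I pick $s_2\in(s_0,\theta]$ with $\frs_{\kappa}(s_2)<0$; uniform convergence gives $\frs_{\kappa'}(s_2)<0$ for $\kappa'$ close to $\kappa$, and together with $\frs_{\kappa'}(0)=0$, $\frs_{\kappa'}'(0)=1$ the intermediate value theorem produces a zero of $\frs_{\kappa'}$ in $(0,s_2)\subseteq(0,\theta]$, hence $\sigma_{\kappa'}^{\sscr{(t)}}(\theta)=\infty$ by definition. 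If instead $s_0=\theta$, then $\frs_{\kappa}(\theta)=0$ while $\frs_{\kappa}(t\theta)>0$ since $t\theta<\theta$; here the argument splits once more, according to whether $\frs_{\kappa'}$ already vanishes somewhere on $(0,\theta]$ (in which case $\sigma_{\kappa'}^{\sscr{(t)}}(\theta)=\infty$ by definition), or stays strictly positive on $(0,\theta]$ (in which case the denominator $\frs_{\kappa'}(\theta)\to 0$ while the numerator $\frs_{\kappa'}(t\theta)\to \frs_{\kappa}(t\theta)>0$, so the ratio diverges to $+\infty$).

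The delicate point is this last subcase of the infinite case: since $\frs_{\kappa'}$ need not vanish on $(0,\theta]$, one cannot simply invoke the definitional value $\infty$ but must quantitatively exploit the smallness of the denominator. The remaining ingredients—uniform convergence on compact intervals and a short Gronwall estimate near $s=0$ to prevent $\frs_{\kappa'}$ from having spurious zeros close to the origin—are routine.
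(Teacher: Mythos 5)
Your proposal is correct and follows the same basic strategy as the paper's proof: split on whether $\sigma_{\kappa}^{\sscr{(t)}}(\theta)$ is finite or infinite, and in the infinite case split further on whether the first zero $s_0$ of $\frs_{\kappa}$ lies strictly inside $(0,\theta)$ (use a sign change to force a zero for nearby $\kappa'$) or at $\theta$ (show the denominator tends to zero while the numerator stays bounded away from zero). The only place where you add material is the Gronwall estimate near $s=0$ to rule out spurious zeros of $\frs_{\kappa'}$ close to the origin; the paper leaves this implicit under the umbrella of ``stability,'' but it is a genuine (if routine) step, since uniform convergence of $\frs_{\kappa'}$ to $\frs_{\kappa}$ alone does not immediately exclude a small zero appearing near $0$. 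Your explicit subcase split in the $s_0=\theta$ scenario (between $\frs_{\kappa'}$ vanishing and not vanishing on $(0,\theta]$) is also a cleaner way to state what the paper's argument actually requires.
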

\begin{proof}
If all the distortion coefficients are finite,
this follows from the stability of (\ref{ode}) under uniform changes of $\kappa$. We only have to check the following. 
If $\kappa_n\rightarrow \kappa$ with respect to $|\cdot|_{\infty}$, and if $\sigma_{\kappa}^{\sscr{(t)}}(\theta)=\infty$,
then $\sigma_{\kappa_n}^{\sscr{(t)}}(\theta)\uparrow \infty$. If $\sigma_{\kappa}^{\sscr{(t)}}(\theta)=\infty$, 
then there exists $r\leq \theta$ such that $\frs_{\kappa}(r)=0$. If $r<\theta$, 
then by the stability property $\frs_{\kappa_n}(r_n)=0$ for some $r_n<\theta$ and $n\in\mathbb{N}$ sufficiently large.
Hence, $\sigma_{\kappa_n}^{\sscr{(t)}}(\theta)=\infty$ for $n$ sufficiently large. Otherwise $r=\theta$ and $\frs_{\kappa}>0$ on $(0,\theta)$. 
Again by stability it follows that $\frs_{\kappa_n}(\theta)\rightarrow 0$ and 
$\frs_{\kappa_n}\rightarrow \frs_{\kappa}$ w.r.t. $|\cdot|_{\infty}$ if $n\rightarrow \infty$.
Therefore, for any compact $J\subset (0,1)$ there exits $n_0$ such that for each $n\geq n_0$ we have $\frs_{\kappa_n}(\cdot\theta)|_J>c>0$ for some $c>0$. 
Hence, $\sigma_{\kappa_n}^{\sscr{(t)}}(\theta)\uparrow \infty$ for each $t\in(0,1)$.
\end{proof}
\begin{lemma}\label{ordinary}
Let $a,b\in\mathbb{R}_{\geq 0}$ and $\kappa:[0,\theta]\rightarrow \mathbb{R}$ as before. If $\sigma_{\kappa}^{\sscr{(t)}}(\theta)<\infty$, then 
\begin{align}\label{kkl}
v(t)=\sigma_{{\kappa}^{\sscr{-}}}^{\sscr{(1-t)}}(\theta)a+\sigma_{{\kappa}^{\sscr{+}}}^{\sscr{(t)}}(\theta)b
\end{align}
 solves
(\ref{klebeband}) in the distributional sense
satisfying $u(0)=a$ and $u(1)=b$. 
\end{lemma}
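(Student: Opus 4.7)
The plan is to exploit the linearity of the ODE (\ref{klebeband}): I would show separately that each of the two summands appearing in (\ref{kkl}) solves (\ref{klebeband}) with the appropriate boundary data, so that their sum solves (\ref{klebeband}) with boundary values $a$ and $b$. Since $\kappa$ is continuous, $C^2$-solutions exist classically and the distributional statement follows automatically.

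The first summand $t\mapsto b\,\sigma_{\kappa^+}^{\sscr{(t)}}(\theta)=b\,\sigma_\kappa^{\sscr{(t)}}(\theta)$ is, by Definition \ref{distort}, already the unique $C^2$-solution of (\ref{klebeband}) with boundary values $0$ and $b$. For the second summand $t\mapsto a\,\sigma_{\kappa^-}^{\sscr{(1-t)}}(\theta)$, I would perform the change of variable $s=1-t$: setting $\tilde u(s):=\sigma_{\kappa^-}^{\sscr{(s)}}(\theta)$, Definition \ref{distort} gives $\tilde u''(s)+\kappa^-(s\theta)\theta^2\tilde u(s)=0$ with $\tilde u(0)=0$ and $\tilde u(1)=1$. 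Because $\kappa^-(r)=\kappa(\theta-r)$, we have $\kappa^-(s\theta)=\kappa((1-s)\theta)$, and a direct computation shows that $h(t):=\tilde u(1-t)$ satisfies $h''(t)+\kappa(t\theta)\theta^2 h(t)=0$ with $h(0)=1$ and $h(1)=0$. Multiplying by $a$ and summing with the first piece yields exactly (\ref{kkl}), with the correct boundary data $v(0)=a$, $v(1)=b$.

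The one point where care is needed is verifying that the hypothesis $\sigma_\kappa^{\sscr{(t)}}(\theta)<\infty$ also guarantees $\sigma_{\kappa^-}^{\sscr{(1-t)}}(\theta)<\infty$, i.e.\ $\frs_{\kappa^-}>0$ on $(0,\theta]$; otherwise the expression on the right-hand side is meaningless. To see this I would identify $\frs_{\kappa^-}(r)$ with $w(\theta-r)$, where $w$ solves $w''+\kappa w=0$ with terminal data $w(\theta)=0$, $w'(\theta)=-1$. The Wronskian identity $W(\frs_\kappa,w)\equiv\text{const}$ evaluated at $0$ and $\theta$ gives $w(0)=\frs_\kappa(\theta)>0$, and any zero of $w$ in $(0,\theta)$ would, by Sturm separation applied to the two linearly independent solutions $\frs_\kappa$ and $w$, force a zero of $\frs_\kappa$ strictly inside $(0,\theta)$, contradicting the hypothesis. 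Hence $w>0$ on $[0,\theta)$ and $\frs_{\kappa^-}>0$ on $(0,\theta]$, so both summands are finite. I expect this finiteness verification to be the only real, though still modest, obstacle; the rest is a routine linearity-plus-time-reversal calculation.
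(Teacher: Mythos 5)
Your proposal is correct and follows essentially the same route the paper takes: direct differentiation of the two summands, the time-reversal identity $\kappa^-((1-t)\theta)=\kappa(t\theta)$, and a Sturm-separation argument to guarantee that $\sigma_{\kappa^-}^{\sscr{(t)}}(\theta)<\infty$ whenever $\sigma_\kappa^{\sscr{(t)}}(\theta)<\infty$. The only difference is one of exposition: the paper dispatches the finiteness point in a short remark preceding the proof (``by Sturm's oscillation theorem''), whereas you spell it out via the Wronskian identity $w(0)=\frs_\kappa(\theta)$ and the separation of zeros of the two independent solutions $\frs_\kappa$ and $w(\cdot)=\frs_{\kappa^-}(\theta-\cdot)$ --- a somewhat more explicit and self-contained version of the same idea.
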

\begin{remark}
Given $\kappa$ as above we set ${\kappa}^{\sscr{-}}=\kappa\circ\phi$ where $\phi(t)=b+a-t$. We also write
$\kappa=:\kappa^{\sscr{+}}$. $\sigma_{\kappa}^{\sscr{(t)}}(\theta)<\infty$ if and only if  $\sigma_{\kappa^{\sscr{-}}}^{\sscr{(t)}}(\theta)<\infty$. This follows from Sturm's oscillation theorem.
\smallskip\\
To see this we
assume $\sigma_{\kappa}^{\sscr{(t)}}(\theta)=\infty$ and $\sigma_{\kappa^-}^{\sscr{(t)}}(\theta)$ is finite. Then $\frs_{\kappa}$ has a zero in $[0,\theta]$ and $\frs_{\kappa^-}$ has no zero in $[0,\theta]$.
But $\frs_{\kappa}(t)$ and $\frs_{\kappa}(\theta-t)$ are solutions of $u''+\kappa u=0$, and therefore Sturm's oscillation theorem yields a contradiction.
\end{remark}
\begin{proof}
We have
$$v''(t)=-{\kappa}^{\sscr{-}}((1-t)\theta)\theta^2\sigma_{{{\kappa}}^{-}}^{\sscr{(1-t)}}(\theta)a-\kappa(t\theta)\theta^2\sigma_{{\kappa}^{\sscr{+}}}^{\sscr{(t)}}(\theta)b$$
and
$${\kappa}^{-}((1-t)\theta)=\kappa^{\sscr{+}}\circ\phi((1-t)\theta)=\kappa^{\sscr{+}}(\theta-(1-t)\theta))=\kappa^{\sscr{+}}(t\theta).$$
Hence (\ref{kkl}) solves (\ref{klebeband}) in the classical sense satisfying the right boundary condition. 
\end{proof}
\begin{proposition}\label{central}
Let $\kappa:[a,b]\rightarrow \mathbb{R}$ be 
continuous and $u:[a,b]\rightarrow \mathbb{R}_{\geq 0}$ be an upper semi-continous. Then the following three statements are equivalent:
\begin{itemize}
 \item[(i)]$u''+\kappa u\leq 0$ in the distributional sense, that is 
\begin{align}\label{distributional}
\int_a^b\varphi''(t)u(t)dt\leq -\int_a^b\varphi(t)\kappa(t)u(t)dt
\end{align}
for any $\varphi\in C_0^{\infty}\left((a,b)\right)$ with $\varphi\geq 0$.
\smallskip
 \item[(ii)] It holds 
\begin{align}
u(\gamma(t))\geq (1-t)u(\gamma(0))+tu(\gamma(1)+\int_0^1g(t,s)\kappa(\gamma(s))\theta^2 u(\gamma(s)) ds
\end{align}
for any constant speed geodesic $\gamma:[0,1]\rightarrow [a,b]$ where $\theta=|\dot{\gamma}|=\mbox{L}(\gamma)$ with
$g(s,t)$ beeing the Green function of $[0,1]$. 
\smallskip
 \item[(iii)] There is a constant $0<L\leq b-a$ such that
 \begin{align}\label{kuconcavity}
u(\gamma(t))\geq \sigma^{\sscr{(1-t)}}_{\kappa^{\sscr{-}}_{\gamma}}(\theta)u(\gamma(0))+\sigma^{\sscr{(t)}}_{\kappa^{\sscr{+}}_{\gamma}}(\theta)u(\gamma(1))
 \end{align}
for any constant speed geodesic $\gamma:[0,1]\rightarrow [a,b]$ with $\theta=|\dot{\gamma}|=\mbox{L}(\gamma)\leq L$. 
We set $\kappa_{\gamma}=\kappa\circ\bar{\gamma}:[0,\theta]\rightarrow \mathbb{R}$. $\bar{\gamma}:[0,\theta]\rightarrow [a,b]$ denotes the unit speed reparametrization of $\gamma$. We use the convention $\infty\cdot 0=0$.
\smallskip
\item[(iv)] The statement in (iii) holds for any geodesic $\gamma:[0,1]\rightarrow [a,b]$.
\end{itemize}
\end{proposition}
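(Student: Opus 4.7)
The implication (iv) $\Rightarrow$ (iii) is immediate, so the plan is to prove (i) $\Leftrightarrow$ (ii), (i) $\Rightarrow$ (iv), and (iii) $\Rightarrow$ (i). In each case the strategy is to fix a constant-speed geodesic $\gamma:[0,1]\to[a,b]$ and reduce to analysing $w:=u\circ\gamma$, which is upper semi-continuous and non-negative on $[0,1]$. A change of variables in the test function of (\ref{distributional}) rephrases (i) as
\[
w''+\kappa(\gamma)\theta^{2}w\leq 0
\]
in the distributional sense on $(0,1)$, for every such $\gamma$.

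For (i) $\Leftrightarrow$ (ii): by definition of the Green function $g(t,s)$ of $-d^{2}/dt^{2}$ with Dirichlet boundary conditions on $[0,1]$, the right-hand side $F(t)$ of the inequality in (ii) is the unique continuous function satisfying $F(0)=w(0)$, $F(1)=w(1)$ and $F''=-\kappa(\gamma)\theta^{2}w$ distributionally. Then $w-F$ is a distributional super-solution of $d^{2}/dt^{2}$ on $[0,1]$ with boundary values $\leq 0$, so the one-dimensional weak maximum principle yields $w\geq F$, i.e.\ (ii). Conversely, applying (ii) on arbitrarily small sub-intervals and integrating against smooth compactly supported non-negative bumps recovers the distributional inequality (\ref{distributional}).

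For (i) $\Rightarrow$ (iv): first suppose both $\sigma$-coefficients in (\ref{kuconcavity}) are finite. By Lemma \ref{ordinary} the right-hand side $V(t)$ of (\ref{kuconcavity}) then solves $V''+\kappa(\gamma)\theta^{2}V=0$ on $[0,1]$ with $V(0)=w(0)$, $V(1)=w(1)$, so $h:=w-V$ satisfies $h''+\kappa(\gamma)\theta^{2}h\leq 0$ distributionally with $h(0)=h(1)=0$. Pick a strictly positive solution $\phi$ of the homogeneous equation on $[0,1]$, which exists precisely by the finiteness assumption (e.g.\ the $V$ from Lemma \ref{ordinary} with boundary values $1,1$), and set $h=\phi z$; a direct computation yields
\[
\phi\bigl(h''+\kappa(\gamma)\theta^{2}h\bigr)=(\phi^{2}z')',
\]
so $\phi^{2}z'$ is non-increasing as a distribution with $z(0)=z(1)=0$, and a short argument on the sign of $z'$ near the boundary forces $z\geq 0$, proving (iv) in the non-degenerate case. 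The degenerate case, where some $\sigma$-coefficient equals $+\infty$, is handled by applying the non-degenerate case to the sub-geodesics $\gamma|_{[0,s]}$ of length strictly less than $\pi_{\kappa_\gamma}$ and then letting $s\uparrow \pi_{\kappa_\gamma}/\theta$; Proposition \ref{continuity} ensures the $\sigma$-coefficients blow up, and since $w$ is bounded above by upper semi-continuity, the limiting inequality forces the boundary values $w(\gamma(0))$ and $w(\gamma(1))$ to vanish, at which point (\ref{kuconcavity}) becomes trivial under the convention $\infty\cdot 0 = 0$.

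Finally, for (iii) $\Rightarrow$ (i): applied to any linear $\gamma:[0,1]\to[c,d]\subset[a,b]$ of length $\theta\leq L$, (iii) is precisely the statement $w\geq V$ on $[0,1]$ with $V$ solving the homogeneous equation and matching endpoint values of $w$. Reversing the Sturm substitution $h=\phi z$ used above shows this is equivalent to $w''+\kappa(\gamma)\theta^{2}w\leq 0$ distributionally on $(0,1)$, which translates back to (i) on $(c,d)$; since every point of $(a,b)$ lies in such a short interval, (i) follows globally. The main obstacle will be the degenerate case $\sigma=+\infty$ in (i) $\Rightarrow$ (iv), where one must use Proposition \ref{continuity} together with upper semi-continuity of $u$ to force both boundary values to vanish; a secondary subtlety is justifying the distributional maximum principle for the merely upper semi-continuous $w$, which is handled by approximation against smooth non-negative test functions.
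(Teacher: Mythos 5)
Your use of the Sturm change of independent function $h=\phi z$, with the identity $\phi\bigl(h''+\kappa\theta^2 h\bigr)=(\phi^2 z')'$, is a genuinely different and cleaner unified machinery than what the paper runs.
For the non-degenerate case of (i)$\Rightarrow$(iv) the paper instead examines the ratio $u\circ\gamma/v$ directly and shows it has no interior local minimum (mirroring Proposition~\ref{monotonicity}), and for (iii)$\Rightarrow$(i) the paper runs a second-difference-quotient argument built on the Taylor expansion of $h\mapsto\sigma_\kappa^{(t)}(h)$ at $h=0$.
Your substitution, via the change of variable $\zeta=\int\phi^{-2}$, ties both directions to the same elementary concavity mechanism and is a legitimate alternative.
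However, two steps as written have gaps.

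First, in (iii)$\Rightarrow$(i) you write that on a single short interval $[c,d]$ the single inequality $w\geq V$ (with $V$ the homogeneous solution matching the endpoint values of $w$) is ``equivalent'' to $w''+\kappa\theta^2 w\le 0$ distributionally on $(c,d)$.
That is not so: for $\kappa\equiv 0$ it would say that lying above one chord implies concavity.
What you actually need is to apply (iii) to \emph{every} sub-geodesic inside $[c,d]$: in the variable $\zeta$ the whole family of inequalities reads ``$z$ lies above its chords on all short intervals'', i.e.\ $z$ is concave in $\zeta$; concavity being a local property, this gives $d^2z/d\zeta^2\le 0$ distributionally, hence $(\phi^2 z')'\le 0$, hence the distributional inequality for $w$.
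The idea is right, but ``reversing the substitution'' does not by itself supply the passage from a one-interval comparison to a differential inequality; the aggregation over sub-intervals is the content that must be made explicit.

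Second, your treatment of the degenerate case of (i)$\Rightarrow$(iv) only shows, via the limiting argument on sub-geodesics $\gamma|_{[0,s]}$ with $s\theta\uparrow\pi_{\kappa_\gamma}$, that $w(\gamma(0))=0$ and $w(\gamma(s))\to 0$.
When $\pi_{\kappa_\gamma}<\theta$, the latter concerns the interior point $\gamma(\pi_{\kappa_\gamma}/\theta)$, not $\gamma(1)$, so the claim that ``both boundary values vanish'' does not follow as stated.
One has to additionally run the symmetric argument from the other endpoint: the Remark after Lemma~\ref{ordinary} guarantees that $\frs_{\kappa^-_\gamma}$ also has a zero in $(0,\theta]$, so the same limiting argument applied to $\gamma|_{[1-s,1]}$ forces $w(\gamma(1))=0$.
(The paper handles this case in a different way, by a finer analysis of the ratio $w/\frs_{\kappa_\gamma}$ showing $u\equiv 0$ along the degenerate geodesic.)
Without this added step the degenerate case is incomplete.
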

\begin{proof}\textbf{1.}\ \
First, we prove that (iii) implies (i). 
Since $u$ is upper semi-continuous, it is bounded from above. Hence, $\sigma_{\kappa_{\gamma}}^{\sscr{(t)}}(\theta)=\infty$ implies $u\circ\gamma(1)=0$ for any geodesic $\gamma$. 
Therefore, one can find $L>L'>0$ such that that $\frs_{\kappa_{\gamma}}>0$ on $(0,\theta]$ 
for any constant speed geodesic $\gamma:[0,1]\rightarrow[a,b]$ with $\theta=|\dot{\gamma}|\leq L'$. Otherwise $u=const=0$.
$\frs_{\kappa_{\gamma}}>0$ implies $\sigma^{\sscr{(t)}}_{\kappa_{\gamma}}(\theta')<\infty$ for any $\theta'\in (0,\theta]$. 
\smallskip\\
\textit{Claim.} For $\kappa$ and $t$ fixed $f:h\mapsto \sigma_{\kappa}^{\sscr{(t)}}(h)$ is twice differentiable at $h=0$ and we have
\begin{align}\label{taylor}
h\in[0,{L}]\mapsto\sigma_{\kappa}^{(t)}(h)=t\left[1+\frac{1}{6}(1-t^2)\kappa(0)h^2\right]+o(h^2)_{\kappa}^{t}.
\end{align}
\textit{Proof of the claim:} 
We can compute the first and second derivative of $f$ at $0$ explicetly by application of l'Hosptital rule. Then we apply the Taylor expansion formula and the claim follows.  
\medskip
\\
If $\overline{\kappa}\geq \kappa\geq \underline{\kappa}$, then
\begin{align*}
o(h^2)_{{\kappa}}^{t}&=\sigma_{{\kappa}}^{(t)}(h)-\frac{1}{3}t(1-t^2){\kappa}(0)h^2-t\\
&\leq \sigma_{\overline{\kappa}}^{(t)}(h)-t\left[\frac{1}{3}(1-t^2)\underline{\kappa}h^2+1\right]=t\frac{1}{3}(1-t^2)(\overline{\kappa}-\underline{\kappa})h^2+o(h^2)^t_{\overline{\kappa}}
\end{align*}
and similar
$$
o(h^2)_{{\kappa}}^{t}\geq t\frac{1}{3}(1-t^2)(\underline{\kappa}-\overline{\kappa})h^2+o(h^2)^t_{\underline{\kappa}}.
$$
Since $\kappa$ is uniformly continuous on $[a,b]$, we can choose $\overline{h}>0$ and $(r_i)_{i=1,\dots,N}$ such that $$\max \kappa|_{[r_i-h,r_i+h]}-\min\kappa|_{[r_i-h,r_i+h]}<\epsilon$$ for each $i=1,\dots N$ and each $h\in[0,\overline{h}]$.
\smallskip\\
Upper semi-continuity of $u$ together with the condition (\ref{kuconcavity}) yiels continuity of $u$ on $[a,b]$. 
We consider $s\in[a,b]$, $h>0$ and a geodesic $\gamma:[0,1]\rightarrow [a,b]$ such that $\gamma_0=s-h$, $\gamma_1=s+h$ and $\gamma_{1/2}=s$ and $s\pm h\in [r_i-\underline{\kappa},r_i+\overline{\kappa}]$ for some $i=1,\dots N$.
Then, from
(\ref{taylor}) and (\ref{kuconcavity}) it follows that
\begin{align*}
&\frac{2u(s)-u(s-h)-u(s+h)}{h^2}\\
&\hspace{1cm}\geq \underbrace{\frac{\kappa(s-h)u(s-h)+\kappa(s+h)u(s+h)}{2}}_{\rightarrow\kappa(s)u(s)}-\epsilon+\underbrace{\frac{\min_{i=1,\dots,N}o(h^2)^t_{\min\kappa|_{[r_i-h,r_i+h]}}}{h^2}}_{\rightarrow 0}.
\end{align*}
Multiplication with $\phi\in C^{\infty}_0((a,b))$ such that $\phi\geq 0$, integration with respect to $s$, a change of variables and taking the limit $h\rightarrow 0$ yields
\begin{align*}
\int u(s)\phi''(s)ds\leq -\int\kappa(s)u(s)\phi(s)ds + \epsilon \int\phi(s)ds.
\end{align*}
Since $\epsilon>0$ can be choosen arbitrarily small, we obtain the result.
\smallskip
\\
\textbf{2.} We prove the equivalence between (i) and (ii). We assume (i) holds. Consider $v(t)=\int_0^1g(t,s)\kappa(\gamma(s))\theta^2 u(\gamma(s)) ds$. Then $v$ solves
\begin{align*}
v''(t)=-\kappa(\gamma(s))\theta^2 u(\gamma(s)) 
\end{align*}
in distributional sense by definition of the Green function. Hence, $u\circ\gamma -v$ has non-positive derivative in the distributional sense, and it follows that
$u\circ\gamma-v$ is concave (see Theorem 1.29 in \cite{simonconvexity}). This implies (ii). The backwards direction is straightforward and works like in the previous step.
\smallskip
\\
\textbf{3.}\ \
We prove that (i) implies (iv). The implication (iv) $\Rightarrow$ (iii) is obvious. First, we assume that $u\in C([a,b])\cap C^2((a,b))$. 
We consider the case when $\frs_{\kappa_{\gamma}}>0$ for any constant speed geodesic $\gamma:[0,1]\rightarrow (a,b)$.
The right-hand-side of (\ref{kuconcavity}) is denoted by $v(t)$ where $t\in[0,1]$. It is positive for any $t$ and solves $v''+\kappa_{\gamma}\circ\gamma\ \theta^2v=0$ 
with boundary condition $v(0)=u(\gamma(0))$ and $v(1)=u(\gamma(1))$.
Hence, it suffices to check that $\frac{u\circ{\gamma}}{v}$ has no local minimum in $(0,1)$. 
Otherwise, there is $\tau\in (0,1)$ such that $(\frac{u\circ{\gamma}}{v})'(\tau)=0$ and $(\frac{u\circ{\gamma}}{v})''(\tau)\geq 0$. 
We can deduce a contradiction exactly like in the proof of Proposition \ref{monotonicity}.
\smallskip

Next, we consider when 
there is a a constant speed geodesic $\gamma:[0,1]\rightarrow (a,b)$ such that $\frs_{\kappa_{\gamma}}(t_0)=0$ for some $t_0\in (0,\theta]$.
Again we adapt parts of the proof of Theorem 14.28 in \cite{viltot}.
We show that $u=0$.
Let $v(t)=\frs_{\kappa_{\gamma}}(\gamma(t))$ and $w(t)=u\circ\gamma(t)$. $v$ satisfies $v''+\kappa_{\gamma}\circ\gamma\theta^2v=0$ and $w$ satisfies $w''+\kappa_{\gamma}\circ\gamma\theta^2\leq 0$.
Consider $\frac{w}{v}=:h$. Then
\begin{align*}
(h'v^2)'&=h''v^2+2vv'h'=\left(\frac{w'v-v'w}{v^2}\right)'v^2+2vv'h'\\
&=\frac{w''v-v''w -v'w'+2(v')^2uw}{v^2}+\frac{2v'w'v^2-2(v')^2vw}{v^2}\\
&\leq-\kappa\theta^2\frac{w}{v}+\kappa\theta^2\frac{w}{v}=0
\end{align*}
Hence, $h'v^2$ is non-increasing. Suppose there is $\tau\in[0,1]$ such that $h'(\tau)>0$ then we also have that $h'v^2(\tau)>0$ and $h'v^2\geq C >0$ on $[\tau,1]$. 
for some constant $C>0$. Hence
$h'\geq C\frac{1}{v^2}$. $v=\frs_{\kappa_{\gamma}}\circ\gamma$ is in $C^2([0,1])$. Especially, it follows that $v(\delta)=\delta + o(\delta^2)$. Thus, $h'(h)\geq C \frac{1}{\delta^2}$. It follows
\begin{align*}
\int_{\delta}^{\epsilon}h'(\tau)d\tau=h(\epsilon)-h(\delta)\geq{C}\int_{\delta}^{\epsilon}\frac{1}{\tau^2}d\tau\rightarrow \infty \ \ \mbox{ if }\ \delta\rightarrow 0.
\end{align*}
Hence $h(\delta)\rightarrow -\infty$ if $\delta\rightarrow 0$
which contradicts $h\geq 0$. On the other hand, if there is $\tau\in [0,1]$ such that $h'(\tau)<0$, the same argument yields $h(\epsilon)\rightarrow -\infty$ if $\delta\rightarrow 0$.
It follows that $h'=0$ and $w(t)=c\cdot \frs_{\kappa_{\gamma}}(\gamma(t))$. Especially $u$ is differentiable at $\gamma(1)\in(a,b)$ with $u|_{(\gamma(0),\gamma(1))}>0$, $u(\gamma(1))=0$ and $u'(\gamma(1))\neq 0$ if $u\neq 0$
since $u'(\gamma(1))=0$ would contradict the uniquness of the solution of (\ref{ode}). But $u(\gamma(1))=0$ and $u'(\gamma(1))\neq 0$ yields $u(x)<0$ for $x\geq \gamma(1)$ which is not possible. Hence, $u=0$ and (\ref{kuconcavity}) holds.
\smallskip

Now, let $u$ be just upper semi-continuous. The equivalence between (i) and (ii) yields that $u$ is continuous. 
Consider $\phi\in C^{\infty}_0((0,1))$ with $\int_{0}^1\phi(t)dt=1$ and $\phi_{\epsilon}(t)=\frac{1}{\epsilon}\phi(\frac{t}{\epsilon})$. $\phi_{\epsilon}\in C^{\infty}_0((0,\epsilon))$.
We set
\begin{align*}
\tilde{u}(s)=u\star\phi_{\epsilon}(s)=\int_{-\epsilon}^0\phi_{\epsilon}(-r)u(s-r)dr=\int_{a}^b\phi_{\epsilon}(t-s)u(t)dr
\end{align*}
for $s\in [a,c]$ with $c<b$ such that $c+\epsilon\geq b$ and $\epsilon>0$ sufficiently small. 
$\kappa$ is uniformily continuous on $[a,b]$. Hence, for $\delta>0$ we can find $\bar{\epsilon}>0$ such that for all $\epsilon<\bar{\epsilon}$ we have $\kappa(s-r)\leq \kappa(s)+\delta$. Then
\begin{align*}
\tilde{u}''(s)=u\star \phi_{\epsilon}''(s)&=\int_a^b(\phi_{\epsilon}(t-s))''u(t)dt=\int_a^b\phi_{\epsilon}''(t-s)u(t)dt\\
&\leq -\int_{a}^b\phi_{\epsilon}(-r)\kappa(s-r)u(r-s)dr\leq -(\kappa(s)+\delta) \tilde{u}(s).
\end{align*}
Since $\tilde{u}\in C^2((a,c))\cap C^0([a,c])$ the previous conclusion holds for $\tilde{u}$ and $\tilde{\kappa}=\kappa+\delta$.
Now, since $u$ is continuous, $\tilde{u}\rightarrow u$ with respect to uniform convergence on $[a,c]$. And since solutions of (\ref{ode}) change uniformily continuous if the coefficient
$\kappa$ changes uniformily continuous on $[a,c]$, we obtain that $\frs_{\tilde{\kappa}_{\gamma}}\rightarrow \frs_{\kappa_{\gamma}}$ where $\gamma$ is a geodesic in $(c,b)$. 
Hence, in the case that where $\frs_{\kappa_{\gamma}}>0$ for any constant speed geodesic $\gamma:[0,1]\rightarrow (a,b)$, we obtain that $\frs_{\tilde{\kappa}_{\gamma}}>0$ for any constant speed geodesic $\gamma$ in $(a,c)$ by Sturm's comparison theorem. It follows that (\ref{ode}) holds for $\tilde{u}:[a,c]\rightarrow [0,\infty)$ and by uniform convergence it also holds for $u|_{[a,c]}$ if $\epsilon\rightarrow 0$. Then, it holds for $u$ since $c$ can be chosen arbitrarily close to $b$.
\smallskip

Finally, consider the case when there is a geodesic $\gamma$ in $(a,b)$ such that $\frs_{\kappa_{\gamma}}(\gamma(1))=0$. Then we can choose $c$ sufficiently close to $b$ and $\epsilon>0$ sufficiently small such that
there is a geodesic $\tilde{\gamma}$ in $(a,c)$ with
$\frs_{\tilde{\kappa}_{\gamma}}(\gamma(1))=0$. By the previous steps it follows that $\tilde{u}=\phi_{\epsilon}\star u=0$ that implies $u=0$.
\smallskip
%
\end{proof}
\paragraph{\textbf{$\kappa u$-concavity in metric spaces.}}
We consider a metric space $(X,\de_{\sX})$ and a lower semi-continuous function $\kappa:X\rightarrow \mathbb{R}$.
We define continuous functions $\kappa_n:X\rightarrow \mathbb{R}$ 
in the following way 
$$\kappa_n(x)=\inf_{y\in X}\left\{\kappa(y)+n\de_{\sX}(x,y)\right\}\leq \kappa(x).$$
We keep this notation for the rest of the article. $\kappa_n$ is monotone non-decreasing and converges pointwise to $\kappa$ as $n\rightarrow \infty$. 
For each $\kappa_n$ and for each Lipschitz curve $\gamma\in\mathcal{LC}(X)$ we can consider $\frs_{\kappa_{n,\gamma}}$ where $\kappa_{n,\gamma}=\kappa_n\circ\bar{\gamma}$
and $\bar{\gamma}:[0,\mbox{L}(\gamma)]\rightarrow X$ is the 1-speed reparametrization of $\gamma$.
If $\frs_{\kappa_{n,\gamma}}>0$ for all $n$, the generalized $\sin$-function $\frs_{\kappa_{n,\gamma}}$ is monotone non-increasing with respect to $n$. 
Hence, the limit exists pointwise everywhere in $[0,\mbox{L}(\gamma)]$. It is again denoted with ${\frs}_{\kappa_{\gamma}}$. 
$\frs_{\kappa_{\gamma}}$ is upper semi-continuous and if $\kappa$ is continuous, $\frs_{\kappa_{\gamma}}$
coincides with the previous definition.
This follows since $\kappa_{n,\gamma}$ converges uniformly to $\kappa_{\gamma}$ by Dini's theorem. 
Therefore, the stability of solutions of (\ref{ode}) under uniform changes of the coefficient $\kappa_{\gamma}$ implies that $\frs_{\kappa_{n,\gamma}}$ converges uniformily 
to the solution of (\ref{ode}) with coefficient $\kappa_{\gamma}$.
We can see that $\mathfrak{s}_{\kappa_{\gamma}}\geq \mathfrak{s}_{{\kappa}_{\gamma}'}$ if $\kappa,\kappa':X\rightarrow \mathbb{R}$ 
are lower semi-continuous and ${\kappa}'\geq {\kappa}$. In particular, we can consider $X=[a,b]\subset \mathbb{R}$.
\begin{definition}
Let $\kappa:X\rightarrow \mathbb{R}$ be lower semi-continuous and let $\gamma:[0,1]\rightarrow X$ be in $\mathcal{LC}(X)$ with $|\dot{\gamma}|=\theta$. 
Consider sequence $\kappa_n$ from above. Then $\sigma_{\kappa_{n,\gamma}}^{\sscr{(t)}}(\theta)$ is 
monotone non-decreasing in $\mathbb{R}\cup\left\{\infty\right\}$. 
We define the \textit{distortion coefficient with respect to $\kappa:X\rightarrow \mathbb{R}$ along $\gamma$} as
\begin{align*}
\sigma_{\kappa_{\gamma}}^{\sscr{(t)}}(\theta):=\lim_{n\rightarrow \infty}\sigma_{\kappa_{n,\gamma}}^{\sscr{(t)}}(\theta)\in \mathbb{R}\cup\left\{\infty\right\}.
\end{align*}
If $\kappa$ is continuous, the definition is consistent with the previous one. That is $\sigma_{\kappa_{\gamma}}^{\sscr{(t)}}(\theta)$ equals 
$\sigma_{\kappa\circ{\bar{\gamma}}}^{\sscr{(t)}}(\theta)$ 
as in Definition \ref{distort}. 
\end{definition}
\begin{lemma}
Let $\kappa:X\rightarrow \mathbb{R}$ be lower semi-continuous, and let $\gamma\in \mathcal{LC}(X)$ with $|\dot{\gamma}|=\theta$.
If $\sigma_{\kappa_{\gamma}}^{\sscr{(t_0)}}(\theta)=\infty$ for some $t_0\in (0,1)$ then $\sigma_{\kappa_{\gamma}}^{\sscr{(t)}}(\theta)=\infty$ for any $t\in (0,1)$.
\smallskip\\
In particular, either one has $\sigma_{\kappa_{\gamma}}^{\sscr{(t)}}(\theta)<\infty$ for any $t\in (0,1)$ and 
\begin{align*}
\sigma_{\kappa_{\gamma}}^{\sscr{(t)}}(\theta)={\frs_{\kappa_{\gamma}}(t\theta)}/{\frs_{\kappa_\gamma}(\theta)}\ \ \mbox{
where $\frs_{\kappa_{\gamma}}(\theta)\neq 0$, or $\sigma_{\kappa_{\gamma}}^{\sscr{(\cdot)}}(\theta)\equiv\infty$.}
\end{align*}
\end{lemma}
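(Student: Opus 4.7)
The plan is to split according to whether the hypothesis already forces $\sigma^{\sscr{(t_0)}}_{\kappa_{n,\gamma}}(\theta)=\infty$ for some finite $n$. In that first case, pick the smallest such $n_0$: the definition of $\sigma$ for continuous $\kappa_{n_0}$ says $\frs_{\kappa_{n_0,\gamma}}$ vanishes in $(0,\theta]$, and this is a $t$-independent property, so $\sigma^{\sscr{(t)}}_{\kappa_{n_0,\gamma}}(\theta)=\infty$ for every $t\in(0,1)$. Sturm's comparison (applied to $\kappa_{n_0}\le\kappa_n$) propagates the zero to every $n\ge n_0$, and monotonicity of the defining sequence in $n$ then yields $\sigma^{\sscr{(t)}}_{\kappa_\gamma}(\theta)=\infty$ at every $t\in(0,1)$. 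The nontrivial case is that every $\sigma^{\sscr{(t_0)}}_{\kappa_{n,\gamma}}(\theta)$ is finite, equivalently $\frs_n:=\frs_{\kappa_{n,\gamma}}>0$ on $(0,\theta]$ for all $n$ and is non-increasing in $n$ by Sturm's comparison. Set $r_n(t):=\frs_n(t\theta)/\frs_n(\theta)$; since $\frs_n(t_0\theta)\le\frs_1(t_0\theta)<\infty$, the hypothesis $r_n(t_0)\to\infty$ forces $\frs_n(\theta)\to 0$.

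The forward half of the blow-up comes from a Wronskian inequality: for $n>m$, the Wronskian $W:=r_nr_m'-r_n'r_m$ vanishes at $t=0$ and satisfies $W'=\theta^2(\kappa_n-\kappa_m)r_nr_m\ge 0$, so $W\ge 0$ and $(r_n/r_m)'=-W/r_m^2\le 0$. Thus $r_n/r_m$ is non-increasing in $t$ with value $1$ at $t=1$; taking $m=1$ yields, for $t\in(0,t_0]$, $r_n(t)/r_1(t)\ge r_n(t_0)/r_1(t_0)\to\infty$, so $\sigma^{\sscr{(t)}}_{\kappa_\gamma}(\theta)=\infty$ on $(0,t_0]$. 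For the remaining $t\in(t_0,1)$ I would analyse the monotone pointwise limit $\frs_{\kappa_\gamma}=\lim_n\frs_n$. The analogous Wronskian applied to $\frs_n,\frs_m$ (both satisfying $\frs(0)=0$, $\frs'(0)=1$) gives $(\frs_n/\frs_m)'\le 0$ in $s\in(0,\theta]$ with $\lim_{s\to 0^+}\frs_n/\frs_m=1$, so $\{\frs_{\kappa_\gamma}=0\}\cap(0,\theta]$ is a closed terminal interval $[s^*,\theta]$, and $\frs_{\kappa_\gamma}(\theta)=0$ gives $s^*\le\theta$.

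The claim to finish Case B is that $s^*=\theta$, equivalently $\frs_{\kappa_\gamma}>0$ on $(0,\theta)$. If instead $s^*<\theta$, then Dini's theorem applied to the monotone decrease $\frs_n\searrow 0$ on the compact set $[s^*,\theta]$ gives uniform convergence, and the ODE $\frs_n''=-\kappa_n\frs_n$ (together with boundedness of $\kappa_n$ on that interval) gives $\frs_n'\to 0$ uniformly on $[s^*,\theta]$ by integrating the ODE from a mean value point; in particular $\frs_n(s^*),\frs_n'(s^*)\to 0$. Continuous dependence on terminal data for the linear IVP on $[0,s^*]$ then forces $\frs_n\to 0$ uniformly on $[0,s^*]$, contradicting $\frs_n'(0)=1$. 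Hence $\frs_{\kappa_\gamma}>0$ on $(0,\theta)$, and $r_n(t)=\frs_n(t\theta)/\frs_n(\theta)\to\frs_{\kappa_\gamma}(t\theta)/0^+=+\infty$ for every $t\in(0,1)$. The ``in particular'' clause is then immediate: in the complementary case some, hence every, $\sigma^{\sscr{(t)}}_{\kappa_\gamma}(\theta)$ is finite, and the ratio formula simply passes to the monotone limit with $\frs_{\kappa_\gamma}(\theta)\ne 0$.

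The main obstacle is the backward-propagation step, which requires the $\kappa_n$ to be uniformly bounded along $\gamma$ in order to invoke continuous dependence on data; this is automatic if $\kappa$ is bounded on the compact image of $\gamma$, but lower semi-continuity alone allows $\kappa$ to be unbounded above there. A clean workaround is to first truncate $\kappa$ to $\kappa\wedge M$, run the argument in the bounded setting, and then let $M\to\infty$ using monotonicity of $\sigma^{\sscr{(t)}}$ in $\kappa$ (Proposition~\ref{monotonicity}); alternatively, one localizes on subintervals of $(0,\theta)$ where $\kappa_\gamma$ is bounded and assembles the zero-set dichotomy there.
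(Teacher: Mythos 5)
Your forward-propagation argument via the Wronskian is correct on $(0,t_0]$, and you handle the case where some $\frs_{\kappa_{n,\gamma}}$ already vanishes in $(0,\theta]$ exactly as the paper does. The genuine gap is $t\in(t_0,1)$, and the $s^*$ analysis you propose does not close it. Both the step extracting $\frs_n'(s^*)\to 0$ from uniform $\frs_n\to 0$ on $[s^*,\theta]$ and the subsequent backward continuous-dependence step require a bound on $\kappa_n$ that is uniform in $n$ along $\gamma$, which lower semi-continuity of $\kappa$ does not supply. Truncating to $\kappa\wedge M$ does not rescue this: the hypothesis is the assertion $\sigma_{\kappa_\gamma}^{\sscr{(t_0)}}(\theta)=\infty$, and after truncation one only knows $\sigma_{(\kappa\wedge M)_\gamma}^{\sscr{(t_0)}}(\theta)\leq\sigma_{\kappa_\gamma}^{\sscr{(t_0)}}(\theta)$, which may be finite for every $M$; then there is nothing for your $s^*$ argument to act on at the level of $\kappa\wedge M$, and letting $M\to\infty$ via Proposition~\ref{monotonicity} afterwards only returns the monotone lower bound you already had, not the needed blow-up on $(t_0,1)$.

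The paper closes $(t_0,1)$ by placing the very comparison you used on $(0,t_0]$ on the other side of $t_0$. Let $\underline{\kappa}$ be a constant with $\underline{\kappa}\leq\kappa_{1,\gamma}\leq\kappa_{n,\gamma}$ on $[0,\theta]$. Each $r_n(t):=\sigma_{\kappa_{n,\gamma}}^{\sscr{(t)}}(\theta)$ is nonnegative and satisfies $r_n''+\underline{\kappa}\theta^2 r_n\leq 0$, so by Proposition~\ref{central} it is $\underline{\kappa}u$-concave on $[0,1]$. Applying this on the subinterval $[t_0,1]$ and writing $t=(1-s)t_0+s$ gives
\begin{align*}
r_n(t)\ \geq\ \sigma_{\underline{\kappa}}^{\sscr{(1-s)}}\big((1-t_0)\theta\big)\,r_n(t_0)\ +\ \sigma_{\underline{\kappa}}^{\sscr{(s)}}\big((1-t_0)\theta\big),
\end{align*}
and the coefficient of $r_n(t_0)$ is a fixed positive constant independent of $n$ (Sturm--Picone with $\underline{\kappa}\leq\kappa_1$ and $\frs_{\kappa_{1,\gamma}}>0$ on $(0,\theta]$ forces $\frs_{\underline{\kappa}}>0$ on $(0,\theta]$). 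Hence $r_n(t)\to\infty$, with no upper bound on $\kappa_n$ and no analysis of the zero set of $\frs_{\kappa_\gamma}$ required. Your own Wronskian identity $(r_n/r_m)'\leq 0$, specialized with a constant-curvature comparison solution in place of $r_m$ and restricted to $[t_0,1]$, is exactly this inequality; the missing idea was to anchor the comparison at $t_0$ rather than at $0$, so that the left boundary value $r_n(t_0)$ carries the divergence forward.
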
 
\begin{proof}For the proof we write $\kappa_{n,\gamma}=\kappa_{n}$ and $\kappa_{\gamma}=\kappa$.
Assume $\sigma_{\kappa_n}^{\sscr{(t)}}(\theta)<\infty$ for each $n\in\mathbb{N}$. 
Otherwise, there is nothing to prove. Then, we must have that $\frs_{\kappa_n}(t_0\theta)/\frs_{\kappa_n}(\theta)\rightarrow \infty$.
Let $\underline{\kappa}=const\leq \kappa_n$ for all $n$. Hence, $\frs_{\kappa_n}(t\theta)\geq \frs_{\underline{\kappa}}(t\theta)$.
By proposition (\ref{central}) we have that
\begin{align*}
\frs_{\underline{\kappa}}(st_0\theta)\geq \sigma_{\underline{\kappa}}^{\sscr{(s)}}(t_0\theta)\frs_{{\kappa}_n}(t_0\theta)
\end{align*}
and
\begin{align*}
\frs_{\underline{\kappa}}(((1-s)t_0+s)\theta)\geq \sigma_{\underline{\kappa}}^{\sscr{(1-s)}}(t_0\theta)\frs_{{\kappa}_n}(t_0\theta)+\sigma_{\underline{\kappa}}^{\sscr{(s)}}(t_0\theta)\frs_{{\kappa}_n}(\theta)
\end{align*}
Hence, if we pick $t\in (0,1)$, we can write $t=st_0$ or $t=(1-s)t_0+s$. If $t=st_0$, we have the following estimate:
\begin{align*}
 \frs_{\kappa_n}(t\theta)/\frs_{\kappa_n}(\theta)\geq \sigma_{\underline{\kappa}}^{\sscr{(s)}}(t_0\theta)\underbrace{{\frs_{{\kappa}_n}(t_0\theta)}/\frs_{\kappa_n}(\theta)}_{\rightarrow \infty}.
\end{align*}
Similar for $t=(1-s)t_0+s$. Thus, $\sigma_{\kappa_n}^{(t)}(\theta)\rightarrow \infty$ for each $t\in (0,1)$ if $n\rightarrow \infty$.
\end{proof}
\begin{corollary}
Let $\kappa:X\rightarrow \mathbb{R}$ be lower semi-continuous, $\gamma$ is a geodesic in $X$.
Then $\kappa\mapsto \sigma_{\kappa_{\gamma}}^{\sscr(t)}(\theta)$ is monotone non-decreasing in the sense of Proposition \ref{monotonicity}.
\end{corollary}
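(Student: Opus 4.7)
The plan is to prove this corollary by a direct reduction to Proposition \ref{monotonicity}: approximate each lower semi-continuous $\kappa$ by the Lipschitz sequence $\kappa_n$ that was fixed in the previous discussion, apply the continuous monotonicity result $\kappa_n \ge \kappa_n'$ at each level, and pass to the limit $n\to\infty$.

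First I would fix two lower semi-continuous functions $\kappa,\kappa':X\to\mathbb{R}$ with $\kappa\ge \kappa'$ pointwise, and verify that the regularization preserves the inequality, i.e.\ $\kappa_n(x)\ge \kappa_n'(x)$ for every $x\in X$ and every $n\in\mathbb{N}$. This is immediate from the definition
\[
\kappa_n(x)=\inf_{y\in X}\{\kappa(y)+n\de_{\sX}(x,y)\}\ge \inf_{y\in X}\{\kappa'(y)+n\de_{\sX}(x,y)\}=\kappa_n'(x),
\]
since the inequality of integrands (with fixed $y$) passes through the infimum. Composing with the unit speed reparametrization $\bar{\gamma}:[0,\theta]\to X$ of the geodesic $\gamma$ gives two continuous functions $\kappa_{n,\gamma},\kappa'_{n,\gamma}:[0,\theta]\to\mathbb{R}$ with $\kappa_{n,\gamma}\ge \kappa'_{n,\gamma}$.

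Next I would apply Proposition \ref{monotonicity} to the continuous pair $\kappa_{n,\gamma}\ge \kappa'_{n,\gamma}$, which yields
\[
\sigma^{\sscr{(t)}}_{\kappa_{n,\gamma}}(\theta)\ge \sigma^{\sscr{(t)}}_{\kappa'_{n,\gamma}}(\theta) \quad \text{for every } t\in[0,1] \text{ and every } n\in\mathbb{N},
\]
where the inequality is understood in $\mathbb{R}\cup\{\infty\}$ (the $\infty$-case being absorbed by the Sturm--Picone step already carried out in the proof of Proposition \ref{monotonicity}). Both sides are monotone non-decreasing sequences in $n$, as recalled in the discussion preceding the definition of $\sigma_{\kappa_{\gamma}}^{\sscr{(t)}}(\theta)$, so the limits in $\mathbb{R}\cup\{\infty\}$ exist and coincide with $\sigma^{\sscr{(t)}}_{\kappa_{\gamma}}(\theta)$ and $\sigma^{\sscr{(t)}}_{\kappa'_{\gamma}}(\theta)$ by definition.

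Finally I would pass to the limit $n\to\infty$; since the inequality $\ge$ is preserved under taking limits in $\mathbb{R}\cup\{\infty\}$ with the usual order topology, we conclude $\sigma^{\sscr{(t)}}_{\kappa_{\gamma}}(\theta)\ge \sigma^{\sscr{(t)}}_{\kappa'_{\gamma}}(\theta)$ for all $t\in[0,1]$, which is exactly the monotonicity claimed. There is no serious obstacle here, since all the technical work—the monotonicity in the continuous case and the well-definedness of the limit via Dini-type stability of solutions of (\ref{ode})—has been done in Proposition \ref{monotonicity} and in the paragraph defining $\sigma_{\kappa_\gamma}^{\sscr{(t)}}(\theta)$; the only point to keep in mind is the $\infty$-case, which is handled by the convention that inequalities in $\mathbb{R}\cup\{\infty\}$ are preserved by monotone limits.
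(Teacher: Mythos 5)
Your argument is correct and is essentially identical to the paper's proof: both reduce the claim to the continuous case by observing that $\kappa\ge\kappa'$ implies $\kappa_n\ge\kappa_n'$ for the inf-convolution approximations, apply Proposition \ref{monotonicity} at each level $n$, and pass to the monotone limit. The only difference is that you spell out the verification that the regularization preserves pointwise ordering and the limiting step more carefully, which the paper leaves implicit.
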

\begin{proof}
If $\kappa'\geq \kappa$, let $\kappa'_n$ and $\kappa_n$ be the corresponding approximations. 
It is clear from the definition that $\kappa_{n,\gamma}'\geq \kappa_{n,\gamma}$. Hence, $\sigma_{\kappa_{n,\gamma}'}^{(t)}(\theta)\geq \sigma_{\kappa_{n,\gamma}}^{(t)}(\theta)$. Taking the limit $n\rightarrow \infty$ yields the result.
\end{proof}
\begin{remark}
If $\gamma \in \mathcal{LC}(X)$, we define $\gamma^-(t)=\gamma(1-t)$, 
and we set $$\sigma_{\kappa_{\gamma}^-}^{\sscr{(t)}}(\theta)=\sigma_{\kappa_{\gamma^-}}^{\sscr{(t)}}(\theta).$$
Therefore, one can see again that $\sigma_{\kappa}^{\sscr{(t)}}(\theta)=\infty$ if and only if $\sigma_{\kappa^-}^{\sscr{(t)}}(\theta)=\infty$.
\end{remark}
\begin{corollary}\label{central2}
Let $\kappa:X\rightarrow \mathbb{R}$ be lower semi-continuous, and let $u:X\rightarrow \mathbb{R}_{\geq 0}$ be upper semi-continuous. 
Then the following statements are equivalent:
\begin{itemize}
 \item[(i)]$(u\circ \bar{\gamma})''+\kappa_\gamma u\circ\bar{\gamma}\leq 0$ in the distributional sense for any constant speed geodesic $\gamma:[0,1]\rightarrow X$.
\smallskip
 \item[(ii)] There is a constant $0<L\leq b-a$ such that
 \begin{align*}
u(\gamma(t))\geq \sigma^{\sscr{(1-t)}}_{\kappa^{\sscr{-}}_{\gamma}}(\theta)u(\gamma(0))+\sigma^{\sscr{(t)}}_{\kappa^{\sscr{+}}_{\gamma}}(\theta)u(\gamma(1))
 \end{align*}
for any constant speed geodesic $\gamma:[0,1]\rightarrow X$ with $\theta=|\dot{\gamma}|=\mbox{L}(\gamma)\leq L$. 
\item[(iii)] The statement in (ii) holds for any geodesic $\gamma:[0,1]\rightarrow X$.
\end{itemize}
\end{corollary}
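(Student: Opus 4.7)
The plan is to bootstrap Proposition~\ref{central} (the continuous-$\kappa$ version) up to the lower semi-continuous setting via the continuous Lipschitz approximants $\kappa_n\uparrow\kappa$. By construction $\sigma^{\sscr{(t)}}_{\kappa_{n,\gamma}}(\theta)\uparrow\sigma^{\sscr{(t)}}_{\kappa_\gamma}(\theta)$ in $\mathbb{R}\cup\{\infty\}$, while the monotonicity corollary just established gives $\sigma^{\sscr{(t)}}_{\kappa_{n,\gamma}^{\pm}}(\theta)\le\sigma^{\sscr{(t)}}_{\kappa_\gamma^{\pm}}(\theta)$. The implication (iii)$\Rightarrow$(ii) is immediate.

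For (ii)$\Rightarrow$(i), I would fix a constant-speed geodesic $\gamma:[0,1]\to X$ with $\theta=|\dot\gamma|\le L$ and transfer the inequality to each $\kappa_n$: when $\sigma^{\sscr{(t)}}_{\kappa_\gamma^{\pm}}(\theta)<\infty$, monotonicity makes the $\kappa_n$-version a weakening of (ii); when $\sigma^{\sscr{(t)}}_{\kappa_\gamma^{\pm}}(\theta)=\infty$, the $\infty\cdot 0=0$ convention forces the corresponding endpoint value of $u$ to vanish, so the $\kappa_n$-version still holds (possibly trivially). Proposition~\ref{central} applied to the continuous $\kappa_n$ then yields $(u\circ\bar\gamma)''+\kappa_{n,\gamma}\,u\circ\bar\gamma\le 0$ distributionally. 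The image of $\bar\gamma$ is compact and $\kappa$ is l.s.c., so $\kappa$ is bounded below there; shifting by this constant turns $\int\varphi\,\kappa_{n,\gamma}\,u\circ\bar\gamma\,dt$ into an integral of nonnegative integrands and monotone convergence sends it to $\int\varphi\,\kappa_\gamma\,u\circ\bar\gamma\,dt$, delivering (i).

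For (i)$\Rightarrow$(iii), since $\kappa_n\le\kappa$, $u\ge 0$, and $\varphi\ge 0$, one trivially promotes (i) for $\kappa$ to (i) for each continuous $\kappa_n$. Proposition~\ref{central} then supplies the conclusion (iv) for $\kappa_n$: for every constant-speed geodesic $\gamma:[0,1]\to X$,
\begin{equation*}
u(\gamma(t))\ge \sigma^{\sscr{(1-t)}}_{\kappa_{n,\gamma}^{-}}(\theta)\,u(\gamma(0))+\sigma^{\sscr{(t)}}_{\kappa_{n,\gamma}^{+}}(\theta)\,u(\gamma(1)).
\end{equation*}
Letting $n\to\infty$, the coefficients converge monotonically in $\mathbb{R}\cup\{\infty\}$ to $\sigma^{\sscr{(1-t)}}_{\kappa_\gamma^{-}}(\theta)$ and $\sigma^{\sscr{(t)}}_{\kappa_\gamma^{+}}(\theta)$ by the very definition of the distortion coefficient in the l.s.c.\ case. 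If both limits are finite, the inequality passes to the limit directly. If one of them diverges and the corresponding endpoint value of $u$ were strictly positive, the left-hand side would have to be infinite for large $n$, contradicting upper semi-continuity of $u\ge 0$ on the compact image of $\gamma$; hence that endpoint value of $u$ vanishes and the convention $\infty\cdot 0=0$ makes (iii) valid.

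The main obstacle is the twin limit passage: the integral convergence $\int\varphi\,\kappa_{n,\gamma}\,u\circ\bar\gamma\,dt\to\int\varphi\,\kappa_\gamma\,u\circ\bar\gamma\,dt$ (handled by monotone convergence after exploiting the lower bound on l.s.c.\ $\kappa$ along the compact image of $\bar\gamma$) and the control of $\sigma^{\sscr{(t)}}_{\kappa_{n,\gamma}^{\pm}}(\theta)$ when it blows up in the limit (handled by using boundedness of the nonnegative u.s.c.\ function $u$ to force vanishing of the offending endpoint values, so the $\infty\cdot 0=0$ convention produces a meaningful limit). Everything else is a routine transcription of Proposition~\ref{central} through the approximation.
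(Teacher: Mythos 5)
Your proposal is correct and takes essentially the same route as the paper: approximate $\kappa$ from below by the continuous $\kappa_n$, invoke Proposition~\ref{central} for each $\kappa_n$, and pass to the limit using monotone convergence after exploiting the uniform lower bound for $\kappa_n$ on the compact image of the geodesic (you shift by a constant; the paper splits into positive and negative parts, which is the same device). Your explicit treatment of the divergent distortion-coefficient case via $\infty\cdot 0=0$ and boundedness of the u.s.c.\ function $u$ is a detail the paper glosses over, but it does not change the argument.
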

\begin{proof} If $\kappa$ is continuous, the result follows from Proposition \ref{central}.
If $\kappa$ is lower semi-continuous, we consider $\kappa_n$ for $n\in \mathbb{N}$. 
\smallskip\\
(ii) $\Rightarrow$ (i):
Since $\kappa_n\uparrow \kappa$, we have $\sigma_{\kappa_{n,\gamma}}^{\sscr{(t)}}(\theta)\uparrow\sigma_{\kappa_{\gamma}}^{\sscr{(t)}}(\theta)$ for $t\in (0,1)$. 
Then we can apply part \textbf{1.} of the proof of Proposition \ref{central} to obtain (\ref{distributional}) for $u$ with $\kappa$ replaced by $\kappa_n$.  
That is
\begin{align*}
-\int\phi''(t)u(t)dt&\geq\int\phi(t)\kappa_{n,\gamma}(t)u(t)dt\\
&=\underbrace{\int[\phi(t)\kappa_{n,\gamma}(t)u(t)]_+dt}_{\nearrow}-\underbrace{\int[\phi(t)\kappa_{n,\gamma}(t)u(t)]_-dt}_{\leq C<\infty}.
\end{align*}
for any $\phi\in C^{\infty}_c((0,|\dot{\gamma}|))$ where the left hand side and $C$ are independent of $n$. Hence, the right hand side converges to the integral of $\phi\kappa_{\gamma}u$.
\smallskip
\\
(i) $\Rightarrow$ (iii):  We can apply part \textbf{3.} from the proof of Proposition \ref{central}, and obtain (\ref{kuconcavity}) with $\kappa$
replaced by $\kappa_n$. By the definition of distortion coefficients for general $\kappa$ the result follows.
\end{proof}


\begin{lemma}\label{gr}
Consider $\lambda\in [0,1]$, $\theta>0$, a curve $\gamma\in \mathcal{LC}(X)$ with $\mbox{L}(\gamma)=\theta$ and $\kappa,\kappa':X\rightarrow \mathbb{R}$ lower semi-continuous. Then
\begin{align*}
\sigma^{\sscr (t)}_{\kappa_{\gamma}}(\theta)^{1-\lambda}\cdot\sigma^{\sscr (t)}_{\kappa_{\gamma}'}(\theta)^{\lambda}\geq \sigma^{\sscr (t)}_{(1-\lambda)\kappa_{\gamma}+\lambda\kappa_{\gamma}'}(\theta).
\end{align*}
Especially, $\kappa\mapsto \log\sigma_{\kappa_{\gamma}}$ is convex.
\end{lemma}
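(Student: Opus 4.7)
The plan is to realize the asserted inequality as a concavity-in-$\kappa$ statement: the geometric-mean function
\[
h(s) \;:=\; \frs_{\kappa_\gamma}(s)^{1-\lambda}\,\frs_{\kappa'_\gamma}(s)^{\lambda}
\]
will be shown to be a supersolution on $[0,\theta]$ of the Sturm--Liouville equation associated with the averaged coefficient $(1-\lambda)\kappa_\gamma+\lambda\kappa'_\gamma$, after which the implication (i)$\Rightarrow$(iv) of Proposition \ref{central} delivers the claim.

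First I would dispose of the trivial case in which at least one of $\sigma^{\sscr(t)}_{\kappa_\gamma}(\theta)$ or $\sigma^{\sscr(t)}_{\kappa'_\gamma}(\theta)$ is infinite: then the left-hand side is $+\infty$ under the convention $\infty\cdot 0=0$ and there is nothing to prove. So assume both are finite, equivalently that $\frs_{\kappa_\gamma},\frs_{\kappa'_\gamma}>0$ on $(0,\theta]$. Second, I reduce to the case where $\kappa,\kappa'$ are continuous by means of the continuous inf-convolution approximants $\kappa_n,\kappa'_n$ introduced before Definition \ref{distort}: the lower semicontinuous case follows by passing to the monotone limit, invoking the monotonicity of $\sigma^{\sscr(t)}_{\cdot}(\theta)$ in its coefficient from Proposition \ref{monotonicity} to identify the limits on both sides (both $(1-\lambda)\kappa_n+\lambda\kappa'_n$ and the canonical continuous approximants of $(1-\lambda)\kappa+\lambda\kappa'$ are continuous monotone sequences converging pointwise to the same lower semicontinuous limit, hence yield a common monotone limit for the distortion coefficients).

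For continuous $\kappa,\kappa'$, write $f=\frs_{\kappa_\gamma}$ and $g=\frs_{\kappa'_\gamma}$, both $C^2$ and strictly positive on $(0,\theta]$. A direct calculation starting from $(\log h)''=h''/h-(h'/h)^2$ and using $f''=-\kappa_\gamma f$, $g''=-\kappa'_\gamma g$, together with the elementary identity
\[
\bigl((1-\lambda)a+\lambda b\bigr)^2-(1-\lambda)a^2-\lambda b^2 \;=\; -\lambda(1-\lambda)(a-b)^2,
\]
yields
\[
\frac{h''}{h} \;=\; (1-\lambda)\frac{f''}{f}+\lambda\frac{g''}{g}-\lambda(1-\lambda)\left(\frac{f'}{f}-\frac{g'}{g}\right)^{2} \;\leq\; -\bigl[(1-\lambda)\kappa_\gamma+\lambda\kappa'_\gamma\bigr],
\]
so that $h''+\bigl[(1-\lambda)\kappa_\gamma+\lambda\kappa'_\gamma\bigr]h\leq 0$ classically on $(0,\theta]$, hence distributionally on $(0,\theta)$. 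Since $h$ is continuous on $[0,\theta]$ with $h(0)=0$ and $h\geq 0$, Proposition \ref{central}(iv) applied on $[0,\theta]$ with the unit-speed geodesic $t\mapsto t\theta$ produces
\[
h(t\theta) \;\geq\; \sigma^{\sscr(t)}_{(1-\lambda)\kappa_\gamma+\lambda\kappa'_\gamma}(\theta)\cdot h(\theta).
\]
Dividing by $h(\theta)=\frs_{\kappa_\gamma}(\theta)^{1-\lambda}\frs_{\kappa'_\gamma}(\theta)^{\lambda}>0$ gives exactly the stated inequality, and the convexity of $\kappa\mapsto\log\sigma^{\sscr(t)}_{\kappa_\gamma}(\theta)$ is then a matter of taking logarithms.

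The main delicacy I anticipate is mostly cosmetic: one must handle with care the approximation passage from continuous to lower semicontinuous coefficients, since $(1-\lambda)\kappa_n+\lambda\kappa'_n$ does not literally coincide with the canonical inf-convolution approximation of $(1-\lambda)\kappa+\lambda\kappa'$. Both sequences are continuous and increase pointwise to the same limit, however, and by Proposition \ref{monotonicity} the associated distortion coefficients increase monotonically to a common value, so the inequality is preserved in the limit. All other steps reduce to the ODE computation and the characterization already established in Proposition \ref{central}.
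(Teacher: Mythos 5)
Your proof is correct and follows essentially the same route as the paper: express the geometric mean as a supersolution of the averaged Sturm--Liouville equation (the paper phrases this via the logarithm $l$ and the Riccati-type inequality $l''\leq -(1-\lambda)\kappa-\lambda\kappa'-(l')^2$, which is identical to your $(\log h)''$ computation after normalizing $h$ to $\sigma$), and then invoke Proposition \ref{central} and the inf-convolution approximation. One small remark: in the passage from continuous to lower semicontinuous coefficients, the statement that the two monotone sequences $(1-\lambda)\kappa_n+\lambda\kappa'_n$ and $((1-\lambda)\kappa+\lambda\kappa')_n$ ``yield a common monotone limit for the distortion coefficients'' is not an immediate consequence of Proposition \ref{monotonicity} alone (that only compares $\sigma$'s for pointwise-ordered coefficients); the paper resolves it by showing (i) the elementary inequality $(1-\lambda)\kappa_n+\lambda\kappa'_n\leq((1-\lambda)\kappa+\lambda\kappa')_n$ and (ii) for each $n$ one can find $m_n$ with $(1-\lambda)\kappa_{m_n}+\lambda\kappa'_{m_n}\geq((1-\lambda)\kappa+\lambda\kappa')_n$ via a Dini/compactness argument on $[0,\theta]$, which gives the sandwiching you need. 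You correctly flag this as the delicate point; it just requires that extra half-paragraph to be airtight.
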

\begin{proof} For the proof we write $\kappa_{n,\gamma}=\kappa_{n}$ and $\kappa_{\gamma}=\kappa$.
Assume $\sigma_{\kappa}^{\sscr{(t)}}(\theta)<\infty$ and $\sigma_{\kappa'}^{\sscr{(t)}}(\theta)<\infty$ for each $t\in (0,1)$, since otherwise there is nothing to prove. 
We assume first that $\kappa$ and $\kappa'$ are continuous.
$l:\ t\mapsto\log\left[\sigma^{\sscr (t)}_{\kappa}(\theta)^{1-\lambda}\cdot\sigma^{\sscr (t)}_{\kappa'}(\theta)^{\lambda}\right]$ solves
\begin{align*}
l''\leq -(1-\lambda)\kappa-\lambda{\kappa'}-(l')^2.
\end{align*}
Hence
$\sigma^{\sscr (t)}_{\kappa}(\theta)^{1-\lambda}\cdot\sigma^{\sscr (t)}_{\kappa'}(\theta)^{\lambda}$ solves $v''+\big((1-\lambda)\kappa+\lambda{\kappa}'\big)v\leq 0$ with boundary
condition $v(0)=0$ and $v(1)=1$.
The result follows by the previous theorem.
\smallskip

If $\kappa$ and $\kappa'$ are lower semi-continuous, we consider again their approximations by $\kappa_n$ and $\kappa'_n$. We easily obtain that
\begin{align*}
\sigma^{\sscr (t)}_{\kappa}(\theta)^{1-\lambda}\cdot\sigma^{\sscr (t)}_{\kappa'}(\theta)^{\lambda}\geq \sigma^{\sscr (t)}_{\kappa_n}(\theta)^{1-\lambda}\cdot\sigma^{\sscr (t)}_{\kappa'_n}(\theta)^{\lambda}\geq\sigma^{\sscr (t)}_{(1-\lambda)\kappa_n+\lambda\kappa'_n}(\theta).
\end{align*}
We show that $\sigma^{\sscr (t)}_{(1-\lambda)\kappa_n+\lambda\kappa'_n}(\theta)\rightarrow \sigma^{\sscr (t)}_{(1-\lambda)\kappa+\lambda\kappa'}(\theta)$.
One can check that $(1-\lambda)\kappa_n+\lambda\kappa'_n\leq ((1-\lambda)\kappa+\lambda\kappa')_n$.
On the other hand, by continuity of the approximating sequence for all $n\in \mathbb{R}$ and for all $x\in [0,\theta]$ there exists $m_{x}\geq 2^n$ and $\delta_x>0$ 
such that $(1-\lambda)\kappa_{\bar{m}}+\lambda\kappa'_{\bar{m}}\geq ((1-\lambda)\kappa+\lambda\kappa')_n$ 
on $B_{\delta_x}(x)$ for all $\bar{m}\geq m_x$. Hence, by compactness of $[0,\theta]$ 
we can choose $x_1,\dots,x_n$ such that $[0,\theta]\subset \bigcup_{i=1,\dots,n}B_{\delta_{x_i}}(x_i)$. Then 
$(1-\lambda)\kappa_{m_n}+\lambda\kappa'_{m_n}\geq ((1-\lambda)\kappa+\lambda\kappa')_n$ for $m_n:=\max_{i}m_{x_i}$. Hence, 
$$\underbrace{\sigma_{((1-\lambda)\kappa+\lambda\kappa')_{m_n}}^{\sscr{(t)}}(\theta)}_{\rightarrow \sigma_{(1-\lambda)\kappa+\lambda\kappa'}^{\sscr{(t)}}(\theta)}\leq \sigma_{(1-\lambda)\kappa_{m_n}+\lambda\kappa'_{m_n}}^{\sscr{(t)}}(\theta)\leq \underbrace{ \sigma_{((1-\lambda)\kappa+\lambda\kappa')_n}^{\sscr{(t)}}(\theta)}_{\rightarrow \sigma^{\sscr{(t)}}_{(1-\lambda)\kappa+\lambda\kappa'}(\theta)}.$$
Hence, $\sigma^{\sscr (t)}_{(1-\lambda)\kappa_n+\lambda\kappa'_n}(\theta)\rightarrow \sigma^{\sscr (t)}_{(1-\lambda)\kappa+\lambda\kappa'}(\theta)$.
\end{proof}
\begin{proposition}\label{contin}
Let $\kappa:X\rightarrow \mathbb{R}$ be continuous (lower semi-continuous). Let $t\in (0,1)$. Then the map
\begin{align*}
\gamma\in (\mathcal{LC}(X),\de_{\infty}) \mapsto \sigma_{\kappa^{+/-}_{\gamma}}^{\sscr{(t)}}(|\dot{\gamma}|)\in\mathbb{R}\cup\left\{\infty\right\}
\end{align*}
is continuous (lower semi-continuous).
\end{proposition}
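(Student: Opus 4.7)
My plan is to reduce the proposition to the stability of solutions of (\ref{klebeband}) under uniform perturbations of the coefficient—a tool already exploited in the proofs of Propositions \ref{monotonicity} and \ref{continuity}—and then to bootstrap from continuous $\kappa$ to lower semi-continuous $\kappa$ via the monotone approximation $\kappa_n\uparrow\kappa$ introduced just before Corollary \ref{central2}.

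First I would treat the continuous case. Fix $t\in(0,1)$ and take $\gamma_m\to\gamma$ in $(\mathcal{LC}(X),\de_\infty)$ with speeds $\theta_m:=|\dot\gamma_m|\to\theta:=|\dot\gamma|$. Since the traces $\gamma_m([0,1])$ eventually lie in a compact neighbourhood of $\gamma([0,1])$, uniform continuity of $\kappa$ on that neighbourhood, together with $\theta_m\to\theta$, yields uniform convergence on $[0,1]$ of the rescaled coefficients $s\mapsto\kappa(\gamma_m(s))\theta_m^{\,2}$ to $s\mapsto\kappa(\gamma(s))\theta^{\,2}$. After this time rescaling, $\sigma_{\kappa_{\gamma_m}}^{\sscr{(t)}}(\theta_m)$ is exactly the value at $t$ of the solution $u_m$ of $u_m''+\kappa(\gamma_m(\cdot))\theta_m^{\,2}u_m=0$ with $u_m(0)=0$, $u_m(1)=1$, whenever this solution stays positive on $(0,1]$, and equals $\infty$ otherwise. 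Classical stability of linear ODE then gives $u_m\to u$ uniformly on $[0,1]$, and the argument splits into the same three cases as in the proof of Proposition \ref{continuity}: (a) if $\frs_{\kappa_\gamma}>0$ on $(0,\theta]$, both coefficients are finite and converge; (b) if $\frs_{\kappa_\gamma}$ has a zero in $(0,\theta)$, stability forces $\frs_{\kappa_{\gamma_m}}$ to have an interior zero for $m$ large, so $\sigma_{\kappa_{\gamma_m}}^{\sscr{(t)}}(\theta_m)=\infty$; (c) if $\frs_{\kappa_\gamma}>0$ on $(0,\theta)$ but $\frs_{\kappa_\gamma}(\theta)=0$, uniform convergence of the Cauchy solutions gives $\sigma_{\kappa_{\gamma_m}}^{\sscr{(t)}}(\theta_m)\uparrow\infty$ for each $t\in(0,1)$. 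The time-reversed variant $\sigma_{\kappa^-_\gamma}^{\sscr{(t)}}$ is handled by precomposing with the continuous involution $\gamma\mapsto\gamma^-$ on $\mathcal{LC}(X)$.

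For lower semi-continuous $\kappa$ I would then invoke the continuous approximants $\kappa_n$. Proposition \ref{monotonicity} together with the definition of $\sigma_{\kappa_\gamma}^{\sscr{(t)}}$ in the lower semi-continuous setting gives
\begin{align*}
\sigma_{\kappa_\gamma}^{\sscr{(t)}}(|\dot\gamma|)=\sup_{n\in\mathbb{N}}\sigma_{\kappa_{n,\gamma}}^{\sscr{(t)}}(|\dot\gamma|),
\end{align*}
and by the continuous case each term on the right is a continuous function of $\gamma$. A pointwise supremum of continuous $\mathbb{R}\cup\{\infty\}$-valued functions is lower semi-continuous, which is the claim.

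The hard part will be case (c) above, where the limiting distortion coefficient is $\infty$ precisely because the Cauchy solution first vanishes at the right endpoint: one has to upgrade uniform convergence of $\frs$ on $[0,\theta]$ to a quantitative lower bound $\frs_{\kappa_{\gamma_m}}(r)\geq c>0$ uniformly on compact subsets $J\subset(0,\theta)$, while simultaneously $\frs_{\kappa_{\gamma_m}}(\theta_m)\to 0$. This is exactly the step already executed at the end of the proof of Proposition \ref{continuity}, so my plan is simply to re-run that argument in the present curve-dependent setting rather than to introduce new machinery.
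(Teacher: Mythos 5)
Your proof follows the paper's own strategy exactly: for continuous $\kappa$ reduce to the stability result of Proposition \ref{continuity} by observing that $\de_\infty$-convergence $\gamma_m\to\gamma$ (together with convergence of the speeds $\theta_m$) makes the rescaled coefficients $s\mapsto\kappa(\gamma_m(s))\theta_m^2$ converge uniformly, and for lower semi-continuous $\kappa$ realize $\sigma_{\kappa_\gamma}^{\sscr{(t)}}(|\dot\gamma|)$ as the monotone supremum of the continuous functions $\gamma\mapsto\sigma_{\kappa_{n,\gamma}}^{\sscr{(t)}}(|\dot\gamma|)$. The paper's proof is just a two-line version of yours, so yours is a faithful (and more explicit) elaboration; the one implicit step you share with the paper is that $\theta_m\to\theta$ along a $\de_\infty$-convergent sequence in $\mathcal{LC}(X)$, which in general is only lower semi-continuity of length (it is automatic on $\mathcal{G}(X)$, the space on which the proposition is actually invoked).
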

\begin{proof}
If $\kappa$ is continuous, the result follows from Proposition \ref{continuity}.
For $\kappa$ lower semi-continuous we consider its continuous approximation $\kappa_n$. Then by definition for any Lipschitz curve $\gamma\in \mathcal{LC}(X)$
$$\sigma_{\kappa^{+/-}_{n,\gamma}}^{\sscr{(t)}}(|\dot{\gamma}|)\uparrow \sigma_{\kappa^{+/-}_{\gamma}}^{\sscr{(t)}}(|\dot{\gamma}|).$$
In particular, $\gamma\mapsto\sigma_{\kappa^{+/-}_{\gamma}}^{\sscr{(t)}}(|\dot{\gamma}|)$ is lower semi-continuous
\end{proof}
\begin{definition}\label{ahr}
Consider a metric space $(Y,\de_{\sY})$ and a lower semi-continuous function $\kappa:Y\rightarrow \mathbb{R}$. 
We say a function $u:Y\rightarrow [0,\infty)$ is \textit{$\kappa u$-convex} if 
$u<\infty$ and 
for all geodesics $\gamma:[0,1]\rightarrow Y$
\begin{align}\label{grugru}
u(\gamma(t))\geq \sigma^{\sscr{(1-t)}}_{\kappa^{\sscr{-}}_{\gamma}}(\mbox{L}(\gamma))u(\gamma(0))+\sigma^{\sscr{(t)}}_{\kappa^{\sscr{+}}_{\gamma}}(\mbox{L}(\gamma))u(\gamma(1))
\end{align}
where $\kappa_{\gamma}=\kappa\circ\bar{\gamma}:[0,\mbox{L}(\gamma)]\rightarrow Y$ and $\bar{\gamma}$ is the unit speed reparametrization of $\gamma$.
\smallskip\\
We say $u$ is weakly $\kappa u$-convex if $u<\infty$ and for all $x,y\in Y$ there exists a geodesic $\gamma:[0,1]\rightarrow Y$ between $x$ and $y$ such that (\ref{grugru}) holds.
\smallskip\\
We say a function $f:Y\rightarrow \mathbb{R}\cup\left\{\pm\infty\right\}$ is (weakly) $(\kappa,N)$-convex if $e^{-\frac{f}{N}}=u$ is (weakly) $\frac{\kappa}{N} u$-concave. 
We use the convention $e^{\infty}=\infty$, $e^{-\infty}=0$.
\end{definition}
\section{Curvature-dimension condition}
Let $(X,\de_{\sX},\m_{\sX})$ be a metric measure space.
Given a number $N\in \mathbb{R}$ with $N\geq 1$, we define the \textit{N-R\'enyi entropy functional}
\begin{align*}
S_N(\ \cdot\ |\m_{\sX}):\mathcal{P}_2(X)\rightarrow \mathbb{R}
\end{align*}
with respect to $\m_{\sX}$ by 
\begin{align*}
\nu=\varrho\m_{\sX}+\nu^s\mapsto S_N(\nu):=S_N(\nu|\m_{\sX}):=-\int_X\varrho^{\frac{1}{N}}d\nu
\end{align*}
where $\varrho\m_{\sX}+\nu^s$ is the Lebesgue decomposition of $\nu$. If $\m_{\sX}$ is a finite measure
for each $\nu\in \mathcal{P}_2(X)$ we have
\begin{align*}
\mbox{Ent}(\nu|\m_{\sX})=\lim_{N\rightarrow \infty}N(1+S_{N}(\nu)),
\end{align*}
where $\mbox{Ent}$ is the \textit{Boltzmann-Shanon entropy functional}. 
\smallskip
\\
\\
We consider $\kappa={\VK}/{N}$ where $\VK:X\rightarrow \mathbb{R}$ is lower semi-continuous and locally bounded from below, 
and we set $\sigma_{\VK_{\gamma}/N}^{\sscr (t)}(\theta)=\sigma_{\theta^2\VK_{\gamma}(\cdot\theta)/N}^{\sscr (t)}(1)=\sigma_{\VK_{\gamma},N}^{\sscr (t)}(\theta)$ 
where $\gamma\in \mathcal{LC}(X)$ and $\theta=|\dot{\gamma}|$.
\begin{definition}
Let $(X,\de_{\sX},\m_{\sX})$, $\VK$ and $\gamma$ as before. We define \textit{generalized distortion coefficients with respect to $\VK$ and $N$ along $\gamma$} as
\begin{align*}
\tau_{\VK_{\gamma},N}^{(t)}(\theta)=\begin{cases}\theta\cdot\infty&\mbox{ if }\VK>0\mbox{ and }N=1\\
t^{\frac{1}{\sN}}\big[\sigma_{\VK_{\gamma},N-1}^{\sscr (t)}(\theta)\big]^{\frac{\sN-1}{\sN}}&\mbox{ otherwise.}
\end{cases}
\end{align*} 
We use the conventions $r\cdot\infty=\infty$ for $r>0$, $0\cdot\infty=0$ and $(\infty)^{\alpha}=\infty$ for $\alpha> 0$. If 
$\VK>0$, we have $\tau^{(t)}_{\sk_{\gamma},1}(\theta)<\infty$ if and only if $\theta=0$, and $\tau_{\sk_{\gamma},1}^{(t)}(\theta)=t$ if $\VK\leq 0$.
\end{definition}
\begin{corollary}\label{somethingmore}
For $\VK,\VK':[0,1]\rightarrow \mathbb{R}$, $N,N'>0$, $t\in[0,1]$ and $\theta>0$,
\begin{align*}
\sigma_{\VK,N}^{\sscr (t)}(\theta)^{N}\sigma_{\VK',N'}^{\sscr (t)}(\theta)^{N'}\geq \sigma_{\VK+\VK',N+N'}^{\sscr (t)}(\theta)^{N+N'}
\end{align*}
and, if $N\geq 1$, 
\begin{align*}
\tau_{\VK,N}^{\sscr (t)}(\theta)^{N}\sigma_{\VK',N'}^{\sscr (t)}(\theta)^{N'}\geq \tau_{\VK+\VK',N+N'}^{\sscr (t)}(\theta)^{N+N'}, 
\end{align*}
and in particular
$$
\tau_{\VK,N}^{\sscr (t)}(\theta)^{N}\tau_{\VK',N'}^{\sscr (t)}(\theta)^{N'}\geq \tau_{\VK+\VK',N+N'}^{\sscr (t)}(\theta)^{N+N'}.
$$
\end{corollary}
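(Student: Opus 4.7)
My plan is to derive all three inequalities from Lemma \ref{gr} (log-convexity of $\kappa\mapsto\sigma^{\sscr{(t)}}_{\kappa_{\gamma}}(\theta)$) together with the definitional formula $\tau^{\sscr{(t)}}_{\VK,N}(\theta)^N=t\cdot\sigma^{\sscr{(t)}}_{\VK,N-1}(\theta)^{N-1}$. For the first inequality I would use the identity $\sigma^{\sscr{(t)}}_{\VK,N}(\theta)=\sigma^{\sscr{(t)}}_{\theta^2\VK/N}(1)$ from the paper, view $\theta^2\VK/N$ and $\theta^2\VK'/N'$ as continuous functions on $[0,1]$ along the identity curve, and apply Lemma \ref{gr} with $\lambda=N'/(N+N')$. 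The convex combination then collapses,
\[
(1-\lambda)\frac{\theta^2\VK}{N}+\lambda\frac{\theta^2\VK'}{N'}=\frac{\theta^2(\VK+\VK')}{N+N'},
\]
so Lemma \ref{gr} delivers $\sigma^{\sscr{(t)}}_{\VK,N}(\theta)^{N/(N+N')}\sigma^{\sscr{(t)}}_{\VK',N'}(\theta)^{N'/(N+N')}\geq \sigma^{\sscr{(t)}}_{\VK+\VK',N+N'}(\theta)$, and raising to the $(N+N')$-th power yields the first claim.

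For the second inequality I would plug the identity $\tau^{\sscr{(t)}}_{\VK,N}(\theta)^N=t\cdot\sigma^{\sscr{(t)}}_{\VK,N-1}(\theta)^{N-1}$ into the product and apply the first inequality with $(N-1,N')$ in place of $(N,N')$, which is legitimate whenever $N>1$:
\[
\tau^{\sscr{(t)}}_{\VK,N}(\theta)^N\sigma^{\sscr{(t)}}_{\VK',N'}(\theta)^{N'}=t\cdot\sigma^{\sscr{(t)}}_{\VK,N-1}(\theta)^{N-1}\sigma^{\sscr{(t)}}_{\VK',N'}(\theta)^{N'}\geq t\cdot\sigma^{\sscr{(t)}}_{\VK+\VK',N+N'-1}(\theta)^{N+N'-1},
\]
and the right-hand side is by definition $\tau^{\sscr{(t)}}_{\VK+\VK',N+N'}(\theta)^{N+N'}$. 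The boundary case $N=1$ has to be handled by hand: when $\VK>0$ the first branch of the definition makes the left-hand side $\infty$, and when $\VK\leq 0$ the claim reduces to $\sigma^{\sscr{(t)}}_{\VK',N'}(\theta)\geq \sigma^{\sscr{(t)}}_{\VK+\VK',N'}(\theta)$, which is the monotonicity of Proposition \ref{monotonicity}.

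The third inequality follows from the second once I establish the auxiliary bound $\tau^{\sscr{(t)}}_{\VK',N'}(\theta)^{N'}\geq \sigma^{\sscr{(t)}}_{\VK',N'}(\theta)^{N'}$. For this I would specialize the first inequality to $\VK=0$, $N=1$, using $\sigma^{\sscr{(t)}}_{0,1}(\theta)=t$ (the solution of $u''=0$): the first inequality then reads $t\cdot\sigma^{\sscr{(t)}}_{\VK',N'-1}(\theta)^{N'-1}\geq \sigma^{\sscr{(t)}}_{\VK',N'}(\theta)^{N'}$, whose left-hand side is by definition $\tau^{\sscr{(t)}}_{\VK',N'}(\theta)^{N'}$. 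Multiplying through by $\tau^{\sscr{(t)}}_{\VK,N}(\theta)^N$ and chaining with the second inequality produces the third. The only real obstacle is the bookkeeping with the conventions ($\infty$ values, $x^0=1$, and the dichotomy in the definition of $\tau^{\sscr{(t)}}_{\VK,1}$); all of the analytic substance is already packaged in Lemma \ref{gr}.
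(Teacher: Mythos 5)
Your proof is correct and follows essentially the same route as the paper, whose own proof is the one-line remark that the corollary follows directly from Lemma~\ref{gr}. You simply supply the details the author omits: the choice $\lambda=N'/(N+N')$ for the first inequality, the rewriting $\tau^{\sscr{(t)}}_{\VK,N}(\theta)^N=t\,\sigma^{\sscr{(t)}}_{\VK,N-1}(\theta)^{N-1}$ for the second, and the auxiliary bound $\tau\geq\sigma$ (itself a specialization of the first inequality with $\VK=0$, $N=1$) for the third, together with the routine boundary checks at $N=1$ and $N'=1$.
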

\proof The result follows directly from Lemma \ref{gr}.
\begin{remark}
For the rest of the article we always assume that $(X,\de_{\sX},\m_{\sX})$ is a metric measure space and $\VK:X\rightarrow \mathbb{R}$ 
is lower semi-continuous and locally bounded from below. 
In this case we say that $\VK$ is an admissible function. 
It follows from Proposition \ref{contin} that if $\VK$ is continuous (lower semi-continuous), the map
$$\gamma\in\mathcal{G}(X)\mapsto \tau_{\sk^{+/-}_{\gamma},\sN}^{\sscr{(t)}}(|\dot{\gamma}|)\in \mathbb{R}_{\geq 0}\cup\left\{\infty\right\}$$ 
is continuous (lower semi-continuous) for $t\in [0,1]$. In particular, it is measurable and we can integrate it with respect to probability measures on $\mathcal{G}(X)$.
\end{remark}
\begin{definition}\label{bigg}
Consider an admissible function $\VK:X\rightarrow \mathbb{R}$, and let $N\in\mathbb{R}$ with $N\geq 1$.
$(X,\de_{\sX},\m_{\sX})$ satisfies the \textit{curvature-dimension condition}
$CD(\VK,N)$ if for each pair $\nu_0,\nu_1\in \mathcal{P}_2(X,\m_{\sX})$ with bounded support
there exists a dynamical optimal coupling $\Pi$ of $\nu_0=\varrho_0d\m_{\sX}$ and $\nu_1=\varrho_1d\m_{\sX}$ and a geodesic $(\nu_t)_{t\in[0,1]}\subset\mathcal{P}_2(X,\m_{\sX})$, such that
\begin{align}\label{curvaturedimension}
S_{N'}(\nu_t)\leq{\textstyle -}\!\!\int\Big[\tau_{\VK^{\sscr{-}}_{\gamma},N'}^{\sscr{(1-t)}}(|\dot{\gamma}|)\varrho_0\left(e_0(\gamma)\right)^{-\frac{1}{N'}}+\tau_{\VK^{\sscr{+}}_{\gamma},N'}^{\sscr{(t)}}(|\dot{\gamma}|)\varrho_1\left(e_1(\gamma)\right)^{-\frac{1}{N'}}\Big]d\Pi(\gamma)
\end{align}
for all $t\in [0,1]$ and all $N'\geq N$. $\VK_{\gamma}=\VK\circ\bar{\gamma}$ where $\gamma:[0,1]\rightarrow X$ is a geodesic and $\bar{\gamma}$ its 1-speed reparametrization.
The right hand side of (\ref{curvaturedimension}) is also denoted with $T^{\sscr{(t)}}_{\VK,N'}(\Pi|\m_{\sX})$.
\end{definition}
\begin{remark}
If $\Pi$ is the optimal dynamical coupling from the previous definition, let $\Pi'(x_0,x_1)(d\gamma)=:\Pi'_{x_0,x_1}(d\gamma)$ be its disintegration with respect to $(e_0,e_1)_{\star}\Pi=\pi$. 
One can reformulate (\ref{curvaturedimension}) in the following way
\begin{align}\label{gluck}
S_{N'}(\nu_t)\leq{\textstyle -}\!\!\int\Big[\mathcal{T}_{\VK^{\sscr{-}},N'}^{\sscr{(1-t)}}(\Pi'_{x_0,x_1})\varrho_0\left(x_0\right)^{-\frac{1}{N'}}+\mathcal{T}_{\VK^+,N'}^{\sscr{(t)}}(\Pi'_{x_0,x_1})\varrho_1\left(x_1\right)^{-\frac{1}{N'}}\Big]d\pi(x_0,x_1)
\end{align}
where 
\begin{align*}
\mathcal{T}_{\VK^-,N'}^{\sscr{(1-t)}}(\Pi')=\int\tau_{\VK^{\sscr{-}}_{\gamma},N'}^{\sscr{(1-t)}}(|\dot{\gamma}|)d\Pi'(d\gamma)
\end{align*}
for any measure $\Pi'\in\mathcal{G}(X)$. \\
Conversely, if there is a kernel $\Pi_{x_0,x_1}'(d\gamma)$ such that for $\mu_0$ and $\mu_1$ there exists a geodesic $\mu_t$ and an optimal coupling $\pi$ with (\ref{gluck}), then
$X$ satisfies $CD(\VK,N)$.
\end{remark}

\begin{remark}
In the case where $\VK$ is constant the previous definition is equivalent to Sturm's curvature-dimension condition in \cite{stugeo2}
since
a measurable selection theorem yields a measurable map $(x,y)\mapsto \gamma_{x,y}\in\mathcal{G}(X)$ where $\gamma_{x,y}$ is a geodesic between $x$ and $y$.
\end{remark}
\begin{definition}
Two metric measure space $(X,\de_{\sX},\m_{\sX})$ and $(X',\de_{\sX'},\m_{\sX'})$ are called isomorphic if there exists an isometry $\psi:\supp\m_X\rightarrow \supp\m_{X'}$ such that 
\begin{align*}
\psi_{\star}\m_{\sX}=\m_{\sX'}.
\end{align*}
\end{definition}

\begin{proposition}
Let $(X,\de_{\sX},\m_{\sX})$ be a metric measure space which satisfies the condition $CD(\VK,N)$ for a continuous function $\VK:X\rightarrow \mathbb{R}$ and $N\geq 1$. 
\begin{itemize}
\item[(i)]
If there is an isomorphism $\psi:(X,\de_{\sX},\m_{\sX})\rightarrow (X',\de_{\sX'},\m_{\sX'})$ onto a metric measure space $(X',\de_{\sX'},\m_{\sX'})$ then $(X',\de_{\sX'},\m_{\sX'})$
satisfies the condition $CD(\psi_{\star}\VK,N)$ with $\psi_{\star}\VK=\VK\circ\psi$.
\item[(ii)]
For $\alpha,\beta>0$ the rescaled metric measure space $(X',\alpha\de_{\sX'},\beta\m_{\sX'})$ satisfies $CD(\alpha^{-2}\VK,N)$.
\item[(iii)]
For each convex subset $U\subset X$ the metric measure space $(U,\de_{\sX}|_{U\times U},\m_{\sX}|_{U})$ satisfies $CD(\VK|_{U},N)$.
\end{itemize}
\end{proposition}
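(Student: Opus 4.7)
The plan is to verify each of the three items by transporting the structures that appear in Definition \ref{bigg} under the respective maps (isometry, rescaling, inclusion of a convex subset) and checking that the defining inequality \eqref{curvaturedimension} is preserved termwise. In all three cases the key observation is that the generalized distortion coefficients depend only on $\VK\circ\bar\gamma$ and on $|\dot\gamma|$, so one only has to track how these two ingredients transform.

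For (i), let $\psi\colon X\to X'$ be an isomorphism. Given $\nu_0',\nu_1'\in\mathcal{P}_2(X',\m_{\sX'})$ with bounded support, set $\nu_i=\psi^{-1}_\star\nu_i'$ and let $\Pi$ be a dynamical optimal coupling of $\nu_0,\nu_1$ satisfying $CD(\VK,N)$ on $X$. Push $\Pi$ forward to $\Pi':=(\psi\circ\cdot)_\star\Pi$ on $\mathcal{G}(X')$; since $\psi$ is an isometry, $\Pi'$ is a dynamical optimal coupling on $X'$, geodesics $\gamma$ of $X$ correspond bijectively to geodesics $\gamma'=\psi\circ\gamma$ with $|\dot\gamma'|=|\dot\gamma|$, and the densities transform via $\varrho_i'\circ\psi=\varrho_i$. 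The $N$-R\'enyi entropy is invariant under $\psi$ because $\psi_\star\m_{\sX}=\m_{\sX'}$. Finally, $(\psi_\star\VK)_{\gamma'}=\VK\circ\psi^{-1}\circ\overline{\psi\circ\gamma}=\VK\circ\bar\gamma=\VK_\gamma$, so the distortion coefficients on either side coincide, and \eqref{curvaturedimension} on $X$ translates term for term to $CD(\psi_\star\VK,N)$ on $X'$.

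For (ii), denote $\de'=\alpha\de_{\sX}$ and $\m'=\beta\m_{\sX}$. A $\de_{\sX}$-geodesic $\gamma$ is a $\de'$-geodesic after the trivial reparametrization, with $|\dot\gamma|_{\de'}=\alpha|\dot\gamma|_{\de}$; moreover $\overline{\gamma}_{\de'}$ is related to $\overline{\gamma}_{\de}$ by rescaling the parameter by $\alpha^{-1}$. A direct substitution in the ODE \eqref{klebeband} shows
\[
\sigma^{(t)}_{\alpha^{-2}\VK_{\gamma},N}\bigl(\alpha|\dot\gamma|_{\de}\bigr)=\sigma^{(t)}_{\VK_{\gamma},N}\bigl(|\dot\gamma|_{\de}\bigr),
\]
and the same identity holds for $\tau$. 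On the measure side, the density of $\nu$ with respect to $\beta\m_{\sX}$ is $\varrho/\beta$, and one computes $S_{N'}(\nu\,|\,\beta\m_{\sX})=\beta^{1/N'}S_{N'}(\nu\,|\,\m_{\sX})$ while $\varrho_i(e_i\gamma)^{-1/N'}$ also picks up a factor $\beta^{1/N'}$; both sides of \eqref{curvaturedimension} thus carry the same factor, and the inequality for $(X,\de_{\sX},\m_{\sX})$ with curvature $\VK$ becomes exactly the inequality for $(X,\alpha\de_{\sX},\beta\m_{\sX})$ with curvature $\alpha^{-2}\VK$.

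For (iii), fix $\nu_0,\nu_1\in\mathcal{P}_2(U,\m_{\sX}|_U)$ with bounded support and view them as elements of $\mathcal{P}_2(X,\m_{\sX})$; by $CD(\VK,N)$ on $X$ we obtain a dynamical optimal coupling $\Pi$ and a geodesic $(\nu_t)$ satisfying \eqref{curvaturedimension}. By convexity of $U$, every $\Pi$-a.e.\ geodesic $\gamma$ with $\gamma_0,\gamma_1\in U$ lies entirely in $U$, so $\Pi$ is actually supported on $\mathcal{G}(U)$ and the intermediate densities $\varrho_t$ are supported in $U$. Consequently the $N'$-R\'enyi entropy computed on $(X,\m_{\sX})$ agrees with the one on $(U,\m_{\sX}|_U)$, and along each such $\gamma$ the function $\VK_\gamma=\VK\circ\bar\gamma$ coincides with $(\VK|_U)_\gamma$, so the distortion coefficients are unchanged. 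Hence the inequality \eqref{curvaturedimension} read on $(U,\de_{\sX}|_{U\times U},\m_{\sX}|_U)$ with admissible function $\VK|_U$ is exactly what we have already, yielding $CD(\VK|_U,N)$.

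The only point that requires minor care is (ii), where the compound scaling of densities, entropies, and distortion coefficients must match; I expect this to be routine once the identity $\sigma^{(t)}_{\alpha^{-2}\VK_\gamma,N}(\alpha|\dot\gamma|)=\sigma^{(t)}_{\VK_\gamma,N}(|\dot\gamma|)$ is recorded. Parts (i) and (iii) are essentially formal consequences of functoriality of the definitions under isometric embeddings.
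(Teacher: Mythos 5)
Your proof is correct and follows the same strategy the paper uses: push the data of Definition \ref{bigg} forward under the isometry, rescaling, or inclusion and verify that the entropy, the densities, and the distortion coefficients transform so that \eqref{curvaturedimension} is preserved term by term. The paper only spells out (i) and refers to Sturm's work for (ii) and (iii), but the underlying argument is the one you give; in particular your scaling identity $\sigma^{(t)}_{\alpha^{-2}\VK_\gamma,N}(\alpha|\dot\gamma|)=\sigma^{(t)}_{\VK_\gamma,N}(|\dot\gamma|)$ and the matching $\beta^{1/N'}$ factors on the two sides of \eqref{curvaturedimension} are exactly what the omitted computation amounts to.
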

\begin{proof}
(i) First, we observe that $\psi^{\star}\VK$ is still lower semi-continuous and locally bounded from below. $\psi$ induces an isometry from $\mathcal{P}_2(X,\m_{\sX})$ to $\mathcal{P}_2(X',\m_{\sX'})$, and 
the image of a geodesic in $X$ is a geodesic in $X'$. Moreover,
\begin{align*}
\int_X\varrho_t^{-\frac{1}{N}+1}d\m_{\sX}=\int_{X'}(\varrho_t\circ\psi)^{-\frac{1}{N}+1}d\m_{\sX'}
\end{align*}
and $\psi_{\star}\Pi$ is an optimal dynamical transference plans provided $\Pi$ is so. Then result follows.
\smallskip

(ii), (iii) The results follow easily. One can easily adapt the proofs of similar statements in \cite{stugeo2}.
\end{proof}
In \cite{sturmvariable} Sturm gave the definition of the condition $CD(\VK,\infty)$ for a lower semi continuous function $\VK:X\rightarrow \mathbb{R}$.
\begin{definition}
$(X,\de_{\sX},\m_{\sX})$ satisfies the condition $CD(\VK,\infty)$ if for any pair $\mu_0,\mu_1\in\mathcal{P}_2(X)$ there exists a $W_2$-geodesic $\mu_t$ and an optimal dynamical transference plan $\Pi$ such that 
$\mu_t=(e_t)_{\star}\Pi$ and 
\begin{align}\label{nacht}
\Ent(\mu_t)\leq (1-t)\Ent(\mu_0)+t\Ent(\mu_1)-\int_0^1\int_{\mathcal{G}(X)}g(s,t) \VK(\gamma(s))|\dot{\gamma}(s)|^2d\Pi(\gamma)ds
\end{align}
for all $t\in[0,1]$. $g(s,t)=\min\left\{s(1-t),t(1-s)\right\}$ is the \textit{Green function} of the unit interval and $\Ent:\mathcal{P}_2(X)\rightarrow \mathbb{R}\cup\left\{-\infty\right\}$ is the Shanon entropy functional.
\end{definition}
\begin{proposition}\label{htht}
Let $(X,\de_{\sX},\m_{\sX})$ be a metric measure space which satisfies the condition $CD(\VK,N)$ for a continuous function $\VK:X\rightarrow \mathbb{R}$ and $N\geq 1$. 
\begin{itemize}
\item[(i)] If $\VK':X\rightarrow \mathbb{R}$ is a continuous function such that $\VK'\leq \VK$, and if $N'\geq N$, then $(X,\de_{\sX},\m_{\sX})$ also satisfies the condition
$CD(\VK',N')$. 
\item[(ii)]
If $(X,\de_{\sX},\m_{\sX})$ has finite mass then it satisfies the condition $CD(\VK,\infty)$ in the sense of Sturm.
\end{itemize}
\end{proposition}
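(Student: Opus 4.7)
For part (i), the plan is to keep the same optimal dynamical coupling $\Pi$ and $W_2$-geodesic $(\nu_t)_{t\in[0,1]}$ that witness $CD(\VK,N)$. Since (\ref{curvaturedimension}) is required for every $N''\geq N$, it holds a fortiori for every $N''\geq N'\geq N$, so the dimensional part is immediate. To replace $\VK$ by $\VK'\leq\VK$, I would invoke the monotonicity of $\sigma^{\sscr{(t)}}_{\kappa_\gamma}$ in $\kappa$ from Proposition \ref{monotonicity} together with its extension to lower semi-continuous data in the corollary following Lemma \ref{gr}: the pointwise bound $\VK'_\gamma\leq\VK_\gamma$ yields $\tau^{\sscr{(t)}}_{\VK'^{\sscr{\pm}}_\gamma,N''}(|\dot\gamma|)\leq\tau^{\sscr{(t)}}_{\VK^{\sscr{\pm}}_\gamma,N''}(|\dot\gamma|)$. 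Multiplying by the non-negative factors $\varrho_i^{-1/N''}$, integrating against $\Pi$ and flipping sign then upgrades (\ref{curvaturedimension}) written for $\VK$ to its analogue for $\VK'$, proving $CD(\VK',N')$.

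For part (ii), the idea is to recover Sturm's entropic condition as the first-order asymptotic content of $CD(\VK,N')$ as $N'\to\infty$. The key step is a $1/N'$-expansion of the distortion coefficients. Writing $v_{N'}(t)=\sigma^{\sscr{(t)}}_{\VK_\gamma,N'-1}(|\dot\gamma|)$, the defining ODE
\[
v''(t)+\frac{|\dot\gamma|^2}{N'-1}\VK_\gamma(t|\dot\gamma|)\,v(t)=0,\qquad v(0)=0,\ v(1)=1,
\]
gives by a standard perturbation argument
\[
v_{N'}(t)=t+\frac{w_\gamma(t)}{N'-1}+O(N'^{-2}),\qquad w_\gamma(t)=\int_0^1 g(s,t)\,s\,|\dot\gamma|^2\,\VK(\gamma(s))\,ds,
\]
where $g$ is the Green function of the unit interval. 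Substituting into $\tau^{\sscr{(t)}}_{\VK^{\sscr{+}}_\gamma,N'}(|\dot\gamma|)=t^{1/N'}v_{N'}(t)^{(N'-1)/N'}$ and expanding both exponents in $1/N'$ (the $\log t$ terms cancel) produces
\[
\tau^{\sscr{(t)}}_{\VK^{\sscr{+}}_\gamma,N'}(|\dot\gamma|)=t+\frac{w_\gamma(t)}{N'}+O(N'^{-2}),
\]
and a symmetric computation using $g(1-s,1-t)=g(s,t)$ and $\VK^{\sscr{-}}_\gamma(s|\dot\gamma|)=\VK(\gamma(1-s))$ gives the analogous expansion for $\tau^{\sscr{(1-t)}}_{\VK^{\sscr{-}}_\gamma,N'}$ with $s$ replaced by $1-s$ in the kernel of $w_\gamma$.

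Adding the two expansions and multiplying by $\varrho_i^{-1/N'}=1-\log\varrho_i/N'+O(N'^{-2})$, the integrand on the right-hand side of (\ref{curvaturedimension}) equals $1+N'^{-1}A_\gamma(t)+O(N'^{-2})$ with
\[
A_\gamma(t)=\int_0^1 g(s,t)\,\VK(\gamma(s))|\dot\gamma|^2\,ds-(1-t)\log\varrho_0(\gamma_0)-t\log\varrho_1(\gamma_1).
\]
Using the identity $\Ent(\nu|\m_{\sX})=\lim_{N'\to\infty}N'(1+S_{N'}(\nu))$ for finite $\m_{\sX}$ recalled just before Definition \ref{bigg}, together with $\int\log\varrho_i(\gamma_i)\,d\Pi=\Ent(\nu_i|\m_{\sX})$, I would multiply both sides of (\ref{curvaturedimension}) by $N'$, add $N'$ to both sides and pass to the limit $N'\to\infty$ to recover exactly (\ref{nacht}).

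The main obstacle is making this passage rigorous, in particular interchanging $\lim_{N'\to\infty}$ with the integration against $\Pi$ on the right. I would exploit that in Definition \ref{bigg} the densities $\varrho_0,\varrho_1$ have bounded support, so $\Pi$-a.e.\ geodesic lies in a fixed bounded set with uniformly bounded length; continuity and local boundedness from below of $\VK$ then give a uniform two-sided bound on this compact set, which together with the uniform control behind Proposition \ref{continuity} furnishes an integrable dominating function, so dominated convergence applies. Convergence of the left-hand side is the classical fact stated right after the definition of $S_N$.
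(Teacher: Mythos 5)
Part (i) is correct and coincides exactly with the paper's one-line argument: keep the same dynamical coupling and geodesic, use monotonicity of $\sigma_{\kappa}^{\sscr{(t)}}(\theta)$ (hence of $\tau_{\kappa,N}^{\sscr{(t)}}(\theta)$) in $\kappa$, and observe that requiring (\ref{curvaturedimension}) for all $N''\geq N$ already covers all $N''\geq N'$.

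Part (ii) also captures the same strategy as the paper. The paper organizes the asymptotics by observing that $w(t)=N'\bigl(\sigma_{\sk^-,N'}^{\sscr{(1-t)}}+\sigma_{\sk^+,N'}^{\sscr{(t)}}-1\bigr)$ solves a Jacobi-type ODE and therefore has a Green-function representation, then lets $N'\to\infty$; you instead carry out a direct first-order perturbation expansion of $\sigma_{\sk_\gamma,N'-1}^{\sscr{(t)}}$ and of $\tau_{\sk^\pm_\gamma,N'}^{\sscr{(t)}}$ and identify the Green kernel in the coefficient. These are the same computation, and your sign bookkeeping and the identity $B_-+B_+=\int_0^1 g(s,t)\VK(\gamma(s))|\dot\gamma|^2\,ds$ are correct. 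One caveat in your last paragraph: the bounded support of $\varrho_0,\varrho_1$ and the continuity of $\VK$ give a uniform (in $\gamma$) control of the $\tau$-expansion, but they do \emph{not} give a uniform remainder in the Taylor expansion $\varrho_i^{-1/N'}=1-\tfrac{1}{N'}\log\varrho_i+O(N'^{-2})$, because the second-order term carries $(\log\varrho_i)^2$, which need not be $\mu_i$-integrable even when $\Ent(\mu_i)<\infty$. The cleanest fix --- and effectively what the paper's chain of inequalities does --- is not to expand $\varrho_i^{-1/N'}$ at all: write $\tau_-\varrho_0^{-1/N'}=(1-t)\varrho_0^{-1/N'}+(\tau_--(1-t))\varrho_0^{-1/N'}$, let the first piece be absorbed into $(1-t)N'(1+S_{N'}(\mu_0))$, and for the second piece note $N'(\tau_--(1-t))$ is bounded uniformly in $\gamma$ while $\varrho_0^{-1/N'}\leq\max(1,\varrho_0^{-1})$, which is a $\Pi$-integrable dominating function since $\int\max(1,\varrho_0^{-1})\,d\mu_0\leq 1+\m_{\sX}(\supp\varrho_0)<\infty$. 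With that adjustment your dominated-convergence argument closes.
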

\begin{proof}
(i) is an immediate consequence of the monotonicity of $\sigma_{\kappa}^{\sscr{(t)}}(\theta)$ with respect to $\kappa$. 
\smallskip

For (ii) it suffices to consider $\nu_0,\nu_1\in \mathcal{P}_2(X,\m_{\sX})$ with $\Ent(\nu_0|m_{\sX})<\infty$ and $\Ent(\nu_1|m_{\sX})<\infty$. 
In any other case the right hand side in (\ref{nacht}) is $\infty$. By assumption, $(X,\de_{\sX},\m_{\sX})$ satisfies $CD(\VK,N)$. 
Hence, there exists a dynamical optimal transference plan $\Gamma$ betweem $\nu_0$ and $\nu_1$ such that (\ref{curvaturedimension}) is satisfied for $\forall N'\geq N$.

Since $\m_{\sX}(X)<\infty$ it implies that $\Ent((e_t)_*\Gamma|\m_{\sX})=\lim_{N'\rightarrow \infty}(1+S_{N'}((e_t)_*\Gamma|\m_{\sX})$ for $t\in [0,1]$. It follows
\begin{align*}
&N'(1+S_{N'}((e_t)_*\Gamma|\m_{\sX}))\\
&\leq-N'\int\left[-(1-t)+\tau_{\VK_{\gamma}^{\sscr{-}},N'}^{\sscr{(1-t)}}(|\dot{\gamma}|)\varrho_0(e_0(\gamma))^{-\frac{1}{N'}}-t+\tau_{\VK_{\gamma}^{\sscr{+}},N'}^{\sscr{(t)}}(|\dot{\gamma}|)\varrho_1(e_1(\gamma))^{-\frac{1}{N'}}\right]\Gamma(\gamma)\\
&\leq (1-t)N'(1+S_{N'}((e_0)_*\Gamma|\m_{\sX}))+tN'(1+S_{N'}((e_1)_*\Gamma|\m_{\sX}))\\
&-N'\int\left[\left[(1-t)+\sigma_{\VK_{\gamma}^{\sscr{-}},N'}^{\sscr{(1-t)}}(|\dot{\gamma}|)\right]\varrho_0(e_0(\gamma))^{\frac{-1}{N'}}+\left[t+\sigma_{\VK_{\gamma}^{\sscr{+}},N'}^{\sscr{(t)}}(|\dot{\gamma}|)\right]\varrho_1(e_1(\gamma))^{\frac{-1}{N'}}\right]\Gamma(\gamma)
\\
&\leq (1-t)N'(1+S_{N'}((e_0)_*\Gamma|\m_{\sX}))+tN'(1+S_{N'}((e_1)_*\Gamma|\m_{\sX}))\\
&-\int \underbrace{N'\left[(1-\sigma_{\VK_{\gamma}^{\sscr{-}},N'}^{\sscr{(1-t)}}(|\dot{\gamma}|)-\sigma_{\VK_{\gamma}^{\sscr{+}},N'}^{\sscr{(t)}}(|\dot{\gamma}|)\right]}_{=w(t)}\Gamma(\gamma)
\end{align*}
$w$ solves $w''=-\VK_{\gamma}|\dot{\gamma}|^2(\sigma_{\VK_{\gamma}^{\sscr{-}},N}^{\sscr{(1-t)}}(|\dot{\gamma}|)+\sigma_{\VK_{\gamma}^{\sscr{+}},N}^{\sscr{(t)}}(|\dot{\gamma}|))$ with $w(0)=w(1)=0$. Hence
$$w= \int_0^1\left[g(s,t){\VK}_{\gamma}|\dot{\gamma}|^2(\sigma_{\VK_{\gamma}^{\sscr{-}},N}^{\sscr{(1-s)}}(|\dot{\gamma}|)+\sigma_{\VK_{\gamma}^{\sscr{+}},N}^{\sscr{(s)}}(|\dot{\gamma}|))\right]ds.$$
Since $\sigma_{\VK_{\gamma}^{\sscr{-}},N}^{\sscr{(1-t)}}(|\dot{\gamma}|)+\sigma_{\VK_{\gamma}^{\sscr{+}},N}^{\sscr{(t)}}(|\dot{\gamma}|)\rightarrow 1$ if $N'\rightarrow\infty$ uniformily in $\gamma\in\mathcal{G}(X)$ for fixed $t$, 
it follows
\begin{align*}
&N'(1+S_{N'}((e_t)_*\Gamma|\m_{\sX}))\\
&\leq (1-t)N'(1+S_{N'}((e_0)_*\Gamma|\m_{\sX}))+tN'(1+S_{N'}((e_1)_*\Gamma|\m_{\sX}))\\
&\hspace{1cm}-\int\int_0^1\left[g(s,t){\VK}_{\gamma}|\dot{\gamma}|^2(\sigma_{\VK_{\gamma}^{\sscr{-}},N}^{\sscr{(1-t)}}(|\dot{\gamma}|)+\sigma_{\VK_{\gamma}^{\sscr{+}},N}^{\sscr{(t)}}(|\dot{\gamma}|))\right]ds\Gamma(\gamma)\\
&\hspace{1cm}\left[\rightarrow -\int\int_0^1g(s,t){\VK}_{\gamma}|\dot{\gamma}|^2ds\Gamma(\gamma)\mbox{ if }N'\rightarrow \infty\right]
\end{align*}
and this implies the result.
\end{proof}
%
\begin{theorem}\label{smoothcase}
Let $(M,g_{\sM},Vd\vol_{\sM})$ be a weighted Riemannian manifold for a smooth function $V:M\rightarrow (0,\infty)$. Let $\VK:M\rightarrow \mathbb{R}$ be a lower semi-continuous function and $N\geq 1$. 
\\
The metric measure space $(M,\de_{\sM},Vd\vol_{\sM})$ satisfies the curvature-dimension condition $CD(\VK,N)$ if and only if $(M,g_{\sM},Vd\vol_{\sM}))$ has $N$-Ricci curvature bounded from below by $\VK$.
\end{theorem}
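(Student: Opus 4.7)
The plan is to adapt the classical equivalence between $CD(K,N)$ and lower Ricci curvature bounds for smooth weighted Riemannian manifolds (due to Cordero-Erausquin--McCann--Schmuckenschläger in the unweighted case and Sturm \cite{stugeo2} in the weighted case) to the variable-curvature setting. The key upgrade is that Corollary \ref{central2} will serve as the bridge between the pointwise differential inequality satisfied along an optimal transport trajectory and the $\tau$-convexity form of $CD(\VK,N)$.

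For the ``if'' direction, assume $\ric_N \geq \VK$ pointwise as bilinear forms. Given $\mu_0 = \varrho_0 \m_{\sM}$, $\mu_1 = \varrho_1 \m_{\sM}$ with bounded support, I would invoke McCann's theorem to obtain the unique optimal transport $T_t(x) = \exp_x(t\nabla\phi(x))$, so that for $\mu_0$-a.e.\ $x$ the curve $\gamma^x_t := T_t(x)$ is a minimizing geodesic. The weighted Jacobian $J^V_t(x) = V(\gamma^x_t)\det(DT_t(x))/V(x)$ factors into a one-dimensional tangential piece (linear in $t$) and a perpendicular $(N-1)$-dimensional piece $J^\perp_t(x)$ that absorbs both the transverse Jacobi directions and the conformal factor $V$. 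Setting $u(t):= J^\perp_t(x)^{1/(N-1)}$, the standard Bochner--Riccati identity combined with AM-GM on the eigenvalues of $\nabla^2\phi$ (this is where the tacit assumption $N\geq \dim M$ enters) produces
\begin{align*}
u''(t) + \tfrac{1}{N-1}\,\ric_N(\dot\gamma^x_t,\dot\gamma^x_t)\, u(t) \leq 0.
\end{align*}
Since $\ric_N(\dot\gamma^x_t,\dot\gamma^x_t) \geq \VK(\gamma^x_t)|\dot\gamma^x|^2$, Corollary \ref{central2} applied with $\kappa = \VK/(N-1)$ yields
\begin{align*}
u(t) \geq \sigma^{\sscr{(1-t)}}_{\VK^{-}_\gamma/(N-1)}(|\dot\gamma^x|)\, u(0) + \sigma^{\sscr{(t)}}_{\VK^{+}_\gamma/(N-1)}(|\dot\gamma^x|)\, u(1).
\end{align*}
Multiplying by the tangential factor raised to the power $1/N$ reassembles exactly the $\tau_{\VK^{\pm}_\gamma,N}^{\sscr{(t)}}$ coefficients of Definition \ref{distort}. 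Integrating the resulting pointwise estimate for $\varrho_t(\gamma^x_t)^{-1/N}$ against $\mu_0$, and treating $N' \geq N$ via the monotonicity built into Corollary \ref{somethingmore}, yields the inequality (\ref{curvaturedimension}) for the dynamical coupling $\Pi = (T_{\cdot})_*\mu_0$.

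For the ``only if'' direction I would argue by contradiction. Suppose $\ric_N(v,v) < \VK(x_0)|v|^2$ at some $(x_0,v) \in TM$. Choose test measures $\mu_0^\epsilon, \mu_1^\epsilon$ by pushing a fixed smooth probability density on a small disk transverse to $v$ forward along the geodesic flow starting at $\exp_{x_0}(\mp\tfrac{\epsilon}{2}v)$, and apply $CD(\VK,N)$ at $t = 1/2$. Expanding both sides in $\epsilon$ using the Taylor formula (\ref{taylor}) for $\sigma^{\sscr{(t)}}_\kappa(\theta)$ at $\theta = 0$ together with the classical second-variation formula for the $N$-R\'enyi entropy produces, to leading order in $\epsilon^2$, an inequality of the form $\ric_N(v,v) \geq \VK(x_0)|v|^2 + o(1)$. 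Lower semi-continuity of $\VK$ is enough to pass to the limit and derive a contradiction.

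The main obstacle will be the bookkeeping in the forward direction: the $N$-dimensional volume distortion must be split into a $1$-dimensional linear factor and an $(N-1)$-dimensional curvature-sensitive factor in precisely the way encoded by $\tau^{\sscr{(t)}}_{\VK_\gamma,N}(\theta) = t^{1/N}[\sigma^{\sscr{(t)}}_{\VK_\gamma,N-1}(\theta)]^{(N-1)/N}$. This is also where the restriction $N \geq \dim M$ is essential (via AM-GM on eigenvalues of $\nabla^2\phi$), and where the $N$-Bakry--\'Emery Ricci tensor appears naturally, because the conformal factor $V$ contributes to the perpendicular Jacobian rather than to the tangential one. Once this identification is made, the variable coefficient $\VK$ introduces no difficulty beyond what Corollary \ref{central2} already handles, and the stability and continuity statements (Propositions \ref{continuity} and \ref{contin}) cover the standard approximation steps for lower semi-continuous $\VK$.
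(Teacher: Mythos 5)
Your proposal is correct, and it differs genuinely from the paper's proof in both directions. The paper's argument is extremely terse: for $CD(\VK,N)\Rightarrow \ric_N\geq \VK$ it localises the lower semi-continuous $\VK$ on a small geodesically convex ball $B_\delta(p)$ to a constant $\VK_\epsilon\leq \VK|_{B_\epsilon(p)}$, applies the monotonicity of the distortion coefficients to conclude $CD(\VK_\epsilon,N)$ with \emph{constant} $\VK_\epsilon$, invokes Sturm's constant-curvature result to get $\ric_N\geq \VK_\epsilon$, and lets $\epsilon\to 0$ using lower semi-continuity of $\VK$; for $\ric_N\geq\VK\Rightarrow CD(\VK,N)$ it simply cites \cite{stugeo2,lottvillani,CMS}. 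You instead argue both directions directly: for $CD\Rightarrow\ric_N$ you blow up the $CD$ inequality locally against a transported disk and read off the Ricci bound from the Taylor expansion (\ref{taylor}) of the distortion coefficients, and for $\ric_N\Rightarrow CD$ you run the weighted Jacobian / Bochner--Riccati calculation and pass from the differential inequality $u''+\kappa u\le 0$ to the $\sigma$-form via Corollary \ref{central2}. The paper's first direction is cheaper because it piggybacks on the constant-$K$ theorem rather than redoing the second-variation computation; your second direction is more honest than the paper's citation, because the variable coefficient $\VK\circ\gamma$ really does require the new ODE comparison from Corollary \ref{central2} (and its l.s.c.\ upgrade), which the sources cited by the paper do not contain -- you correctly identify this as the crux. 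One small caveat: when you write that the transport potential Jacobian factors into a linear tangential piece and a perpendicular piece, you should spell out the Hölder/AM--GM step that turns the product of the two pieces, each raised to the appropriate power, into the single inequality with $\tau^{\sscr{(t)}}_{\VK_\gamma,N}=t^{1/N}[\sigma^{\sscr{(t)}}_{\VK_\gamma,N-1}]^{(N-1)/N}$; this is the step where variable $\VK$ enters through $\sigma_{\VK_\gamma,N-1}$ and where Corollary \ref{somethingmore} is used to handle all $N'\geq N$. That bookkeeping is routine but it is the whole content of the ``reassembly'' you flag, and since the paper says nothing about it, it would be worth writing out.
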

\begin{remark}
For each real number $N>n$ the $N$-Ricci tensor is defined as
\begin{align*}\ric^{N,V}(v)
&=\ric(v)-(N-n)\frac{\nabla^2 V^{\frac{1}{N-n}}(v)}{V^{\frac{1}{N-n}}(p)}\end{align*}
where $v\in TM_p$. For $N=n$ we define
$$\ric^{N,V}(v):=
\begin{cases}
\ric(v)
+\nabla^2\log V(v) &\nabla \log V (v)=0\\
-\infty & \mbox{else}.
\end{cases}$$
For $1\leq N<n$ we define $\ric^{N,\Psi}(v):=-\infty$ for all $v\neq 0$ and $0$ otherwise.

\end{remark}
\begin{example}
Let $\overline{(\alpha,\beta)}=I\subset \mathbb{R}$ be some interval  where $\alpha,\beta\in\mathbb{R}\cup\left\{\pm\infty\right\}$.
Let $\VK:I\rightarrow \mathbb{R}$ be a lower semi-continuous function and let $u:I\rightarrow [0,\infty)$ be a non-negative solution
of
\begin{align*}
u''+ \VK u=0.
\end{align*}
Then for any $N\geq 1$, the metric measure space
\begin{align*}
(I,|\cdot|_2,u^{N-1}d\mathcal{L}^1)
\end{align*}
satisfies the curvature-dimension $CD(\VK,N)$.
\end{example}

\begin{proof}
``$\Leftarrow$'': Pick a point $p\in M$ and $\epsilon>0$ such that $\VK|_{B_{\epsilon}(p)}\geq \VK_{\epsilon}$. 
There exists geodesically convex ball $B_{\delta}(p)$ for $0<\delta<\epsilon$ around $p$. Hence, $$(B_{\delta}(p),d_{\sM}|_{B_{\delta}(p)},Vd\vol_{\sM}|_{B_{\delta}})$$
satisfies the condition $CD(\VK_{\epsilon},N)$. It follows that the $N$-Ricci tensor is bounded from below by $\VK_{\epsilon}$ (for instance see \cite{stugeo2}).
If $\epsilon$ goes to $0$, we see that $\VK_{\epsilon}\rightarrow \VK(p)$ and the result follows.\\
``$\Rightarrow$'':
The proof goes exactly as the proof of the corresponding result in \cite{stugeo2}, \cite{lottvillani} or \cite{CMS}.
\end{proof}
\section{Geometric consequences}
In this section we assume $\supp\m_{\sX}=X$. 
\begin{theorem}[Brunn-Minkowski inequality]\label{bm}
Assume that the metric measure space $(X,\de_{\sX},\m_{\sX})$ satisfies $CD(\VK,N)$ for $\VK:X\rightarrow \mathbb{R}$ lower semi-continuous and $N\geq 1$.
Let $A_0,A_1\subset X$ be bounded Borel sets with $\m_{\sX}(A_0)\m_{\sX}(A_1)>0$. 
Set $\mathcal{G}(A_0,A_1)=\left\{\gamma\in \mathcal{G}(X):\gamma(i)\in A_i, i=0,1\right\}$. Then
\begin{align}\label{gibb}
\m_{\sX}(A_t)^{\frac{1}{N}}\geq \inf_{\gamma\in \mathcal{G}(A_0,A_1)}\tau_{{\sk}^-_{\gamma},\sN}^{\sscr{(1-t)}}(|\dot{\gamma}|)\m_{\sX}(A_0)^{\frac{1}{N}}+ \inf_{\gamma\in \mathcal{G}(A_0,A_1)}\tau_{{\sk}^+_{\gamma},\sN}^{\sscr{(t)}}(|\dot{\gamma}|)\m_{\sX}(A_1)^{\frac{1}{N}}.
\end{align}
where $\inf_{\gamma\in \mathcal{G}(A_0,A_1)}\tau_{{\sk}^{-/+}_{\gamma},\sN}^{\sscr{(1-t/t)}}(|\dot{\gamma}|)\geq 0$.
\end{theorem}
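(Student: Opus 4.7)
The plan is to test the $CD(\VK,N)$ condition on the normalized uniform probability measures supported on $A_0$ and $A_1$, and then compare the resulting entropy bound to a Jensen-type lower bound for $S_N$ on the interpolating measure, which is supported on $A_t$.

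Concretely, I would set $\nu_i = \m_{\sX}(A_i)^{-1}\m_{\sX}\!\lfloor_{A_i}$ for $i=0,1$. These lie in $\mathcal{P}_2(X,\m_{\sX})$ with bounded support (since $A_0,A_1$ are bounded with positive finite mass, which we may assume by restricting to $\{\VK\geq -C\}$-type considerations and using the local boundedness of $\VK$). Their densities are constant: $\varrho_0 \equiv \m_{\sX}(A_0)^{-1}$ on $A_0$ and $\varrho_1 \equiv \m_{\sX}(A_1)^{-1}$ on $A_1$. Applying $CD(\VK,N)$ with $N'=N$ yields a dynamical optimal coupling $\Pi$ and a geodesic $\nu_t = \varrho_t\m_{\sX}$ satisfying (\ref{curvaturedimension}). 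Since $\varrho_0(e_0(\gamma)) = \m_{\sX}(A_0)^{-1}$ and $\varrho_1(e_1(\gamma)) = \m_{\sX}(A_1)^{-1}$ for $\Pi$-a.e.\ $\gamma$, and since every such $\gamma$ lies in $\mathcal{G}(A_0,A_1)$, the right-hand side factors as
\begin{align*}
-\m_{\sX}(A_0)^{\frac{1}{N}}\!\!\int\!\tau_{\sk^-_\gamma,N}^{(1-t)}(|\dot\gamma|)\,d\Pi(\gamma) \;-\; \m_{\sX}(A_1)^{\frac{1}{N}}\!\!\int\!\tau_{\sk^+_\gamma,N}^{(t)}(|\dot\gamma|)\,d\Pi(\gamma).
\end{align*}
Bounding each integral from below by the respective infimum over $\mathcal{G}(A_0,A_1)$ (which is nonnegative by our conventions) gives
\begin{align*}
S_N(\nu_t) \leq -\inf_{\gamma\in\mathcal{G}(A_0,A_1)}\tau_{\sk^-_\gamma,N}^{(1-t)}(|\dot\gamma|)\,\m_{\sX}(A_0)^{\tfrac{1}{N}} - \inf_{\gamma\in\mathcal{G}(A_0,A_1)}\tau_{\sk^+_\gamma,N}^{(t)}(|\dot\gamma|)\,\m_{\sX}(A_1)^{\tfrac{1}{N}}.
\end{align*}

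On the other side, since $\nu_t = (e_t)_\star\Pi$ is supported on $e_t(\mathrm{supp}\,\Pi) \subseteq A_t$, we have $\varrho_t = 0$ outside $A_t$, so by Jensen's inequality applied to the concave function $r\mapsto r^{1-1/N}$ (or equivalently by H\"older with exponents $N/(N-1)$ and $N$ for $N>1$, and trivially for $N=1$):
\begin{align*}
-S_N(\nu_t) = \int_{A_t}\varrho_t^{1-\frac{1}{N}}\,d\m_{\sX} \leq \Bigl(\int_{A_t}\varrho_t\,d\m_{\sX}\Bigr)^{1-\frac{1}{N}}\m_{\sX}(A_t)^{\frac{1}{N}} = \m_{\sX}(A_t)^{\frac{1}{N}}.
\end{align*}
Chaining the two inequalities yields (\ref{gibb}).

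The main technical nuisance will be the measurability of $A_t$: a priori it is only analytic, so one should either interpret $\m_{\sX}(A_t)$ as outer measure, or (better) replace $A_t$ by a Borel hull containing $e_t(\mathrm{supp}\,\Pi)$; either way the Jensen step above goes through unchanged. A secondary point to check is that the infima on the right-hand side are indeed nonnegative (which follows from the conventions $\infty\cdot 0 = 0$, $r\cdot\infty=\infty$ for $r>0$, and nonnegativity of $\tau$), so the statement is vacuously true when either infimum vanishes. No further surprises are expected; the rest of the argument is a bookkeeping exercise leveraging the constancy of $\varrho_0,\varrho_1$.
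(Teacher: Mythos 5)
Your proposal is correct and follows essentially the same route as the paper: normalize the restricted measures $\mu_i=\m_{\sX}(A_i)^{-1}\m_{\sX}|_{A_i}$, apply the $CD(\VK,N)$ condition so the constancy of the densities $\varrho_0,\varrho_1$ lets the right-hand side factor, bound the integral of $\tau$ from below by the infimum over $\mathcal{G}(A_0,A_1)$, and close with Jensen's inequality to get $-S_N(\nu_t)\le\m_{\sX}(A_t)^{1/N}$. One small caveat: the paper first assumes $\m_{\sX}(A_0),\m_{\sX}(A_1)<\infty$ and then extends to the general case by approximating $A_i$ by sets of finite measure, whereas your parenthetical justification for finiteness (invoking $\{\VK\ge -C\}$-type considerations) is not by itself conclusive, since local finiteness of $\m_{\sX}$ only controls small balls; you should replace that with the approximation step.
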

\begin{proof}
First, assume $\m(A_0),\m(A_1)<\infty$ and set $\mu_i=\m(A_i)^{-1}\m|_{A_i}$ for $i=0,1$. The curvature-dimension yields
\begin{align*}
\int_{A_t}\varrho_t^{\frac{1}{N'}}d\mu_t\geq \int\tau_{{\sk}^-_{\gamma},\sN}^{\sscr{(1-t)}}(|\dot{\gamma}|)\m_{\sX}(A_0)^{1/N}+ \tau_{{\sk}^+_{\gamma},\sN}^{\sscr{(t)}}(|\dot{\gamma}|)\m_{\sX}(A_1)^{1/N}
\end{align*}
where $(\mu_t=\varrho_td\m_{\sX})_t$ denotes the absolutely continuous geodesic that connects $\mu_0$ and $\mu_1$, and $\Pi$ is an optimal dynamical plan. By Jensen's inequality the left hand side of
the previous inequality is smaller than $\m_{\sX}(A_t)^{\frac{1}{N'}}$. The general case follows by approximation of $A_i$ by sets of finite measure.
\end{proof}
\begin{definition}[Minkowski content]
Consider $x_0\in X$ and $B_r(x_0)\subset X$. Set $v(r)=\m_{\sX}(\bar{B}_r(x_0))$. The Minkowski content of $\partial B_r(x_0)$ (the $r$-sphere around $x_0$) is defined as
\begin{align*}
s(r):=\limsup_{\delta\rightarrow 0}\frac{1}{\delta}\m_{\sX}(\bar{B}_{r+\delta}(x_0)\backslash B_r(x_0)).
\end{align*}
\end{definition}

\begin{theorem}\label{useful}
Assume $(X,\de_{\sX},\m_{\sX})$ satisfies $CD(\VK,N)$ for an admissible function $\VK$ and $N\in[1,\infty)$.
Then, $(X,\de_{\sX})$ is a proper metric space, each bounded set has finite measure and satisfies a doubling property, and either $\m_{\sX}$ is supported by one point or all points and all sphere have mass $0$. 

In particular, if $N>1$ then for each $x_0\in X$, for all $0<r<R$ and $\underline{\VK}\in\mathbb{R}$ such that $\VK|_{B_R(x_0)}\geq \underline{\VK}$ and $R\leq \pi\sqrt{(N-1)/\underline{\VK}\vee 0}$, we have
\begin{align}\label{one}
\frac{s(r)}{s(R)}\geq \frac{\sin_{\underline{\sk}/(\sN-1)}^{\sN-1}r}{\sin_{\underline{\sk}/(\sN-1)}^{\sN-1}R} \ \ \&\ \ 
\frac{v(r)}{v(R)}\geq \frac{\int_0^r\sin_{\underline{\sk}/(\sN-1)}^{\sN-1}tdt}{\int_0^R\sin_{\underline{\sk}/(\sN-1)}^{\sN-1}tdt}.
\end{align}
If $N=1$ and $\VK\leq 0$, then
\begin{align*}
\frac{s(r)}{s(R)}\geq 1,\ \ \ \ \frac{v(r)}{v(R)}\geq \frac{r}{R}.
\end{align*}
\end{theorem}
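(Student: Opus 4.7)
The plan is to derive the Bishop--Gromov sphere and ball comparison inequalities directly from the Brunn--Minkowski theorem (Theorem~\ref{bm}), and then read off properness, finite mass on bounded sets, the doubling property, and the atomic structure of $\m_{\sX}$ as corollaries.

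Fix $x_0\in X$ and $0<r<R$, and let $\underline{\VK}$ be as in the hypothesis. I would apply Theorem~\ref{bm} to the sets $A_0=\bar B_\delta(x_0)$ and $A_1=\bar B_{R+h}(x_0)\setminus B_R(x_0)$ for small $\delta,h>0$. Any constant-speed geodesic $\gamma$ with $\gamma(0)\in A_0$ and $\gamma(1)\in A_1$ has length $|\dot\gamma|\in[R-\delta,R+h+\delta]$ and stays inside $\bar B_{R+h+\delta}(x_0)$, so $\VK_\gamma\geq\underline{\VK}$ along $\gamma$, and by monotonicity of the distortion coefficients in $\VK$ (Proposition~\ref{monotonicity} and the corollary following it),
\[
\tau_{\sk^+_\gamma,N}^{(t)}(|\dot\gamma|)\;\geq\;\tau_{\underline{\VK},N}^{(t)}(|\dot\gamma|)\;=\;t^{1/N}\left[\frac{\sin_{\underline{\VK}/(N-1)}(t|\dot\gamma|)}{\sin_{\underline{\VK}/(N-1)}(|\dot\gamma|)}\right]^{(N-1)/N}.
\]
Choosing $t$ so that $tR\approx r$, the intermediate set $A_t$ lies in a thin annulus of width $\lesssim th+(1-t)\delta$ centred on the sphere of radius $tR$. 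Dropping the $A_0$-term in (\ref{gibb}), raising to the $N$-th power, dividing by the annulus width, and letting first $\delta\to 0$ and then $h\to 0$ produces the sphere comparison; one uses continuity of $\theta\mapsto\tau_{\underline{\VK},N}^{(t)}(\theta)$ to pass the infimum over $|\dot\gamma|\in[R-\delta,R+h+\delta]$ to the limit value $\tau_{\underline{\VK},N}^{(t)}(R)$. Integrating over $r$ yields the volume comparison, and the case $N=1$, $\VK\leq 0$ is immediate from $\tau^{(t)}_{\underline{\VK},1}(\theta)=t$.

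For the structural statements: lower semi-continuity and local boundedness of $\VK$ give, around every point, a small ball on which $\VK$ is bounded below; the volume comparison then supplies a local doubling estimate $v(2r)\leq C\,v(r)$ valid for $r$ below an explicit scale, and iterating from a ball of finite mass (guaranteed by the definition of a metric measure space) shows that every bounded set has finite measure and $\m_{\sX}$ is locally doubling. Combined with completeness, local doubling implies total boundedness of closed balls, whence properness. For the atomic dichotomy, if $\m_{\sX}(\{x_0\})>0$ and there is another candidate atom $y$, Brunn--Minkowski applied to $\nu_0=\delta_{x_0}$ and a small-ball average around $y$ forces the intermediate densities to concentrate on a single geodesic, and iterating shows $\m_{\sX}$ is supported at $x_0$. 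Finally, spheres are null: $\partial B_r(x_0)=\bigcap_n(\bar B_{r+1/n}(x_0)\setminus B_{r-1/n}(x_0))$, and the sphere comparison just proved, together with the doubling bound, forces $\m_{\sX}(\partial B_r(x_0))=0$ in the non-degenerate case.

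The main technical obstacle is the careful geometric control of $A_t$: one must show it is contained in a thin annulus centred on radius $tR$ with width linear in $\delta,h$, and that the infimum $\inf_{\gamma\in\mathcal{G}(A_0,A_1)}\tau^{(t)}_{\sk^+_\gamma,N}(|\dot\gamma|)$ is not lost in the limit. For the latter, the lower semi-continuity in Proposition~\ref{contin} together with the monotonicity in $\VK$ lets one replace the variable-curvature distortion by the constant-$\underline{\VK}$ one prior to taking $\delta,h\to 0$, where continuity in $\theta$ finishes the argument.
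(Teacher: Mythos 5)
Your route to the sphere and ball comparisons follows the paper's almost line for line: concentrate $\nu_0$ near $x_0$ and $\nu_1$ on a thin annulus at radius $R$, replace $\VK$ by the constant lower bound $\underline{\VK}$ via Proposition~\ref{monotonicity}, use the triangle inequality to pin $A_t$ in a thin annulus at radius $\approx tR$, divide by the annulus widths, pass to the limit using continuity of $\theta\mapsto\tau_{\underline{\VK},N}^{(t)}(\theta)$, and integrate; then bootstrap local doubling through local compactness and Hopf--Rinow to get properness and extend the estimates to all radii where $\VK$ is bounded below. These steps are sound and match the paper's proof (which, in turn, follows Sturm's proof of Theorem~2.3 in \cite{stugeo2}).

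The genuine gap is in your treatment of the atomic dichotomy. You claim that Brunn--Minkowski with $\nu_0=\delta_{x_0}$ ``forces the intermediate densities to concentrate on a single geodesic''. This is not available here: the theorem makes no non-branching assumption, and in a branching space the optimal dynamical plan between $\delta_{x_0}$ and a small-ball average around $y$ can spread over arbitrarily many geodesics with no cancellation forcing single-geodesic concentration. The paper avoids this by first establishing, for any $x$ with $\m_{\sX}(\{x\})=0$, that every point of $B_R(x)\setminus\{x\}$ and every sphere around $x$ is $\m_{\sX}$-null; it then treats an atomic $x_0$ by locating a nearby atom-free $x$ with $x_0\in B_R(x)\subset B_{2R}(x_0)$ (such $x$ exists because local finiteness forces the atoms to be at most countable, so atom-free points are dense unless $\supp\m_{\sX}$ is locally the singleton $\{x_0\}$), and then the already-proved statement on $B_R(x)$ gives $\m_{\sX}(\{x_0\})=0$, a contradiction. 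Your idea can be salvaged without appealing to a single geodesic — note that the $t$-midpoint sets of $x_0$ and $y$ lie at pairwise distinct distances from $x_0$, hence are disjoint as $t$ ranges over $(0,1)$, each has measure bounded below uniformly in $t$ by the Brunn--Minkowski estimate as $\epsilon\to 0$, and uncountably many disjoint positive-measure sets inside a ball of finite measure is impossible — but that is a different argument from the one you wrote, and the intermediate ``single geodesic'' step is the one that fails.
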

\begin{proof}
\textbf{1.} Let us fix a point $x_0\in X$ such that $\m_{\sX}(\left\{x_0\right\})=0$, and let $R>0$ be sufficiently small such that $\VK|_{B_{2R}(x_0)}\geq \underline{\VK}$ for some $\underline{\VK}\in\mathbb{R}$. 
Let $r\in (0,R)$ and put $t=r/R$. We choose $\epsilon>0$ and $\delta>0$ and define $A_0=B_{\epsilon}(x_0)$ and $A_1=\bar{B}_{R+\delta R}(x_0)\backslash B_R(x_0)$. By triangle inequality one easily verifies that
\begin{align*}
A_t\subset \bar{B}_{r+\delta r+\epsilon r/R}(x_0)\backslash B_{r-\epsilon r/R}(x_0)\subset B_{2R}(x_0).
\end{align*}
Hence, if we consider measures $\mu_i=\m_{\sX}(A_i)^{-1}\m_{\sX}|_{A_i}$ for $i=0,1$ the curvature-dimension condition, $\m_{\sX}(\left\{x_0\right\})=\emptyset$, local finitness of the reference measure and the monotonicity of the distortion coefficients imply that
\begin{align*}
\m_{\sX}(\bar{B}_{(1+\delta)r}(x_0)\backslash B_r(x_0))^{1/N}\geq \tau_{\underline{\VK},N}^{\sscr{(r/R)}}((1\pm\delta)R)\m_{\sX}(\bar{B}_{(1+\delta)R(x_0)}\backslash B_R(x_0))^{1/N}.
\end{align*}
Since $\m_{\sX}$ is locally finite, we can assume that the right hand side is finite.
\smallskip\\
\textbf{2.} Now, we can follow precisely the proof of Theorem 2.3 in \cite{stugeo2} to obtain that $\m_{\sX}(\partial B_r(x_0))=0$ for $r\in (0,R)$, $\m_{\sX}(\left\{x\right\})=0$ for $x\in B_R(x_0)\backslash\left\{x_0\right\}$
and (\ref{one}) and (\ref{two}) for 
$r\in (0,R)$ and $R>0$ as chosen like in the first step. 
If $\m_{\sX}(\left\{x_0\right\})\neq 0$, we can choose a point $x$ close to $x_0$ such that $\m_{\sX}(\left\{x\right\})=0$ and $B_R(x)\subset B_{2R}(x_0)$.
This is implied by the local finiteness of $\m_{\sX}$ and the existence
of $\epsilon$-geodesics. If there is no such point $x$ then necessarily $\supp\m_{\sX}=\left\{x_0\right\}$.
We repeat the previous steps for $x$ instead of $x_0$ and obtain that $\m_{\sX}(\left\{x_0\right\})=0$ unless $\supp\m_{\sX}=\left\{x_0\right\}$. 
\smallskip\\
\textbf{3.} Hence, for any $x_0\in X$ there is $R>0$ (sufficiently small) such that $\de_{\sX}$ and $\m_{\sX}$ restricted to $\bar{B}_R(x_0)$ satisfy a 
doubling property provided the radius of balls is sufficiently small, and therefore $\bar{B}_r(x_0)$ is compact 
for $r\in (0,R)$. In particular, $X$ is locally compact. Then, since $(X,\de_{\sX})$ is also a complete length space, the generalized Hopf-Rinow theorem (for instance, see Theorem 2.5.28 in \cite{bbi}) implies $(X,\de_{\sX})$ is a proper metric space. 
Therefore, any closed ball $\bar{B}_R(x_0)$ is compact, and we can repeat the previous step for any $0<r<R$. In particular, it follows that (\ref{one}) and (\ref{two}) hold, and any 
bounded set has finite measure.
\end{proof}
\begin{corollary}[Doubling]
For each metric measure space $(X,\de_{\sX},\m_{\sX})$ satisfying the condition $CD(\VK,N)$ for an admissible $\VK$ and $N\geq 1$ the doubling property holds on each bounded set $X'\subset X$, and in the case $\VK\geq 0$ the doubling
constant is $\leq 2^N$, and otherwise it can be estimated in terms of $\underline{\VK}$, $N$ and $L$ as follows 
\begin{align*}
C\leq 2^N\frc_{\sk/(\sN-1)}^{\sN-1}L
\end{align*}
where $L$ is the diameter of the bounded set $X'$, and $\underline{\VK}=\min_{X'}\VK$.
\end{corollary}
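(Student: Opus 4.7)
The plan is to derive the doubling property directly from the sharp Bishop--Gromov volume comparison of Theorem \ref{useful}. Fix a bounded subset $X' \subset X$ of diameter $L$, set $\underline{\VK} := \min_{X'} \VK$ and write $k := \underline{\VK}/(N-1)$. For any $x_0 \in X'$ and any $r>0$ in the admissible range (that is, $2r \leq L$, with the additional constraint $2r \leq \pi\sqrt{(N-1)/\underline{\VK}}$ when $\underline{\VK} > 0$), I would apply the second inequality of (\ref{one}) with $R = 2r$ to obtain
\begin{equation*}
\frac{v(2r)}{v(r)} \;\leq\; \frac{\int_0^{2r} \frs_k^{N-1}(t)\,dt}{\int_0^r \frs_k^{N-1}(t)\,dt}.
\end{equation*}
This reduces the whole problem to a one-dimensional computation on the model ray.

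Next I would massage the numerator via the substitution $t = 2s$ combined with the double-angle identity $\frs_k(2s) = 2\,\frs_k(s)\frc_k(s)$, which is immediate from the explicit trigonometric, linear, or hyperbolic expressions for $\frs_k$ and $\frc_k$ in each of the three sign regimes of $k$. This yields
\begin{equation*}
\int_0^{2r} \frs_k^{N-1}(t)\,dt \;=\; 2^{N}\int_0^r \frs_k^{N-1}(s)\,\frc_k^{N-1}(s)\,ds,
\end{equation*}
so the ratio above is bounded by $2^{N}\cdot \sup_{s\in[0,r]} \frc_k^{N-1}(s)$ after pulling the $\frc_k^{N-1}$ factor out of the integral against the common density $\frs_k^{N-1}$.

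To finish I would split on the sign of $\underline{\VK}$. If $\underline{\VK}\geq 0$, then on the admissible interval $[0,r] \subset [0, \pi_k/2]$ the function $\frc_k$ takes values in $[0,1]$ (with $\frc_0 \equiv 1$), so the doubling constant satisfies $C \leq 2^N$. If $\underline{\VK}<0$, then $\frc_k(s) = \cosh(\sqrt{-k}\,s)\geq 1$ is nondecreasing, so its supremum over $[0,r]\subset [0,L]$ is $\frc_k(L)$, yielding the claimed bound $C \leq 2^N\frc_k^{N-1}(L)$. The degenerate case $N=1$ causes no trouble, since all exponents $N-1$ vanish and the inequality reduces correctly to $C \leq 2$. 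The main (minor) obstacle is simply bookkeeping the admissible range of $r$ in the three sign regimes of $\underline{\VK}$ and confirming compatibility with $R = 2r$; no further analytical input beyond the double-angle identity and Theorem \ref{useful} is required.
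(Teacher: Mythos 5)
Your proposal is correct and follows essentially the same approach as the paper. The paper's own proof is a one-line reference to Theorem \ref{useful} (the Bishop--Gromov comparison) together with a pointer to the analogous computation in \cite{stugeo2}; your proof simply spells this out via the substitution $t=2s$, the double-angle identity $\frs_k(2s)=2\frs_k(s)\frc_k(s)$, and monotonicity of $\frc_k$, recovering the stated bound (indeed with the slightly sharper constant $2^N\frc_k^{N-1}(L/2)$, since $B_{2r}(x_0)\subset X'$ forces $r\leq L/2$).
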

\begin{proof}
The result follows from the previous theorem (see also \cite{stugeo2}).
\end{proof}
\begin{corollary}[Hausdorff dimension]
For each metric measure space $(X,\de_{\sX},\m_{\sX})$ satisfying a curvature-dimension condition $CD(\VK,N)$ for some admissible $\VK$ and $N\geq 1$, the Hausdorff dimension is $\leq N$.
\end{corollary}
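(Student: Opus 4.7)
The plan is to deduce the Hausdorff bound from the Bishop--Gromov-type volume comparison in Theorem \ref{useful} together with a standard covering argument. The starting point is inequality (\ref{one}): for each $x_0 \in X$ there exist $R>0$ and $\underline{\VK} \in \mathbb{R}$ (using that $\VK$ is locally bounded from below) such that
\begin{equation*}
\m_{\sX}(B_r(x_0)) \;\geq\; \m_{\sX}(B_R(x_0))\,\frac{\int_0^r \sin_{\underline{\sk}/(\sN-1)}^{\sN-1}(t)\,dt}{\int_0^R \sin_{\underline{\sk}/(\sN-1)}^{\sN-1}(t)\,dt}
\end{equation*}
for all $0<r\le R$. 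Since $\sin_{\underline{\sk}/(\sN-1)}(t) = t + O(t^3)$ as $t\to 0$, the numerator is at least $\tfrac{1}{2N}\,r^N$ for $r$ small, and hence $\m_{\sX}(B_r(x_0)) \geq c\, r^N$ for a positive constant $c=c(x_0,R)$ and all $r$ small enough.

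I would next promote this to a uniform bound on bounded sets. Theorem \ref{useful} also tells us that $(X,\de_{\sX})$ is proper and that $\VK$ is locally bounded below; therefore a bounded set $A \subset X$ can be covered by finitely many balls on each of which the previous pointwise inequality holds with a common constant $c>0$ and common threshold $R>0$. Thus $\m_{\sX}(B_r(x)) \ge c\,r^N$ for every $x \in A$ and every $r \in (0,R)$, and the enlargement $A' := \{x : \de_{\sX}(x,A) \le R\}$ has finite total mass $\Lambda < \infty$.

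With this Ahlfors-type lower bound in hand, the Hausdorff dimension estimate is routine: for $\epsilon \in (0,R/2)$ let $\{x_i\}_{i=1}^{M(\epsilon)} \subset A$ be a maximal $\epsilon$-separated subset, so that the balls $B_{\epsilon/2}(x_i)$ are pairwise disjoint and contained in $A'$, giving
\begin{equation*}
M(\epsilon)\cdot c\,(\epsilon/2)^N \;\le\; \sum_{i=1}^{M(\epsilon)}\m_{\sX}(B_{\epsilon/2}(x_i)) \;\le\; \Lambda,
\end{equation*}
hence $M(\epsilon) \le C\,\epsilon^{-N}$. By maximality $\{B_\epsilon(x_i)\}_i$ covers $A$, so for any $s>N$ the $s$-dimensional Hausdorff pre-measure satisfies $\mathcal{H}^s_{2\epsilon}(A) \le M(\epsilon)(2\epsilon)^s \le C'\epsilon^{\,s-N} \to 0$ as $\epsilon \to 0$. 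Therefore $\mathcal{H}^s(A)=0$ for all $s>N$; since $X$ is a countable union of such bounded sets (again by properness and separability), $\dim_{\mathcal{H}}(X)\le N$.

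The only genuinely technical point is the passage from pointwise to uniform control of $c$ and $R$ over a bounded set, and that is handled cleanly by properness plus the lower semi-continuity and local lower boundedness of $\VK$; the remainder is the standard packing/covering computation.
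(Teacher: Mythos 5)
The paper supplies no explicit argument for this corollary; it is stated as an immediate consequence of Theorem \ref{useful}, and the intended reasoning is precisely the standard deduction from the Bishop--Gromov volume lower bound via a packing/covering estimate. Your proof fills in exactly that deduction, so it is correct and follows the same route the paper (implicitly, and following Sturm \cite{stugeo2}) has in mind. The one point worth making fully explicit when you ``promote to a uniform bound'' is that after covering $\bar{A}$ by finitely many Bishop--Gromov balls $B_{r_i}(x_i)$, the constant $c$ for a center $y$ near $x_i$ should be extracted from $\m_{\sX}(B_{r_i/4}(y)) \geq \m_{\sX}(B_{r_i/8}(x_i)) > 0$ (using $\supp\m_{\sX}=X$) rather than from $\m_{\sX}(B_{r_i}(y))$ directly, so that the lower bound is genuinely uniform over $y$; and the $N=1$ case should be treated with the separate estimate $v(r)/v(R) \ge r/R$ from Theorem \ref{useful}, since $\sin_{\underline{\sk}/(\sN-1)}$ is not defined there. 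Neither of these affects the validity of the argument.
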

\begin{definition}
Let $(X,\de_{\sX},\m_{\sX})$ be any metric measure space, let $N\geq 1$ and let $\VK:X\rightarrow \mathbb{R}$ be admissible. 
We define the \textit{effective diameter} of $(X,\de_{\sX},\m_{\sX})$ with respect to $\VK$ and $N$ as 
\begin{align*}
\pi_{\sk/(N-1)}=\sup\left\{\de_W(\mu_0,\mu_1):\exists\Pi\in \DyCo(\mu_0,\mu_1) \mbox{ s.t.}\int\tau_{\sk^{+/-},\sN}^{(t)}(|\dot{\gamma}|)d\Pi(\gamma)<\infty\right\}.
\end{align*}
By definition, we have $\pi_{\sk/(\sN-1)}\leq \diam_{\sX}$.
\end{definition}
\begin{proposition}
Let $(X,\de_{\sX},\m_{\sX})$ satisfy $CD(\VK,N)$ for an admissible function $\VK$ and $N\geq 1$. Then $\pi_{\sk/(\sN-1)}=\diam_{\sX}$.
\end{proposition}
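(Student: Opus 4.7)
The plan is to prove $\pi_{\sk/(\sN-1)} \geq \diam_{\sX}$, since the reverse inequality is part of the definition. Fix any $D < \diam_{\sX}$ and any two points $x_0, x_1 \in X$ with $\de_{\sX}(x_0, x_1) > D$; the goal is to exhibit a dynamical optimal coupling $\Pi$ between absolutely continuous probability measures $\mu_0, \mu_1$ whose Wasserstein distance exceeds $D$ and for which the $\tau$-coefficients are $\Pi$-integrable.

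First I would replace the two points by uniform measures on small balls. Pick $\epsilon > 0$ with $\de_{\sX}(x_0, x_1) - 2\epsilon > D$ and set $\mu_i := \m_{\sX}(B_\epsilon(x_i))^{-1}\, \m_{\sX}|_{B_\epsilon(x_i)}$, $i=0,1$. Theorem \ref{useful} ensures that $X$ is proper, that bounded sets have finite $\m_{\sX}$-mass, and (since $\supp \m_{\sX} = X$) that every ball has positive measure, so each $\mu_i$ is a well-defined absolutely continuous probability measure with constant positive density $c_i$ on its bounded support, and
$\de_W(\mu_0,\mu_1) \geq \de_{\sX}(x_0,x_1) - 2\epsilon > D$.

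Next, apply $CD(\VK, N)$ to $(\mu_0, \mu_1)$ with $N' = N$ at $t = 1/2$: this produces a dynamical optimal coupling $\Pi \in \DyCo(\mu_0,\mu_1)$ and a geodesic $(\nu_t) \subset \mathcal{P}_2(X,\m_{\sX})$ with $\nu_t = \varrho_t \m_{\sX}$ satisfying
\begin{equation*}
S_N(\nu_{1/2}) \;\leq\; - c_0^{-1/N}\!\!\int \tau^{\sscr{(1/2)}}_{\sk^{-}_{\gamma},\sN}(|\dot\gamma|)\, d\Pi(\gamma) \;-\; c_1^{-1/N}\!\!\int \tau^{\sscr{(1/2)}}_{\sk^{+}_{\gamma},\sN}(|\dot\gamma|)\, d\Pi(\gamma).
\end{equation*}
The decisive observation is that the left-hand side is finite: $\supp \nu_{1/2}$ is contained in the bounded closed ball $\bar B_{R}(x_0)$ with $R := \de_{\sX}(x_0,x_1)+2\epsilon$, which has finite $\m_{\sX}$-mass $M$ by Theorem \ref{useful}, and Jensen's inequality applied to the concave map $t \mapsto t^{1-1/N}$ on the probability space $\m_{\sX}(\bar B_R(x_0))^{-1}\m_{\sX}|_{\bar B_R(x_0)}$ gives $-S_N(\nu_{1/2}) = \int \varrho_{1/2}^{\,1-1/N}\,d\m_{\sX} \leq M^{1/N} < \infty$. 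Since the right-hand side of the CD-inequality dominates a finite quantity and the constants $c_i^{-1/N}$ are strictly positive, both distortion integrals must be finite. Hence $\Pi$ is admissible in the supremum defining $\pi_{\sk/(\sN-1)}$, yielding $\pi_{\sk/(\sN-1)} \geq \de_W(\mu_0,\mu_1) > D$; letting $D \uparrow \diam_{\sX}$ finishes the argument.

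The only non-routine step I anticipate is the lower bound on $S_N(\nu_{1/2})$: it rests on properness of $X$ and on finiteness of $\m_{\sX}$ on bounded sets, which are exactly the geometric consequences of $CD(\VK, N)$ furnished by Theorem \ref{useful}. Once the entropy is known to be finite, the rest is a direct use of the positivity of the $\tau$-coefficients and the definition of $\pi_{\sk/(\sN-1)}$.
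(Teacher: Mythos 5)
Your proof is correct and is essentially the paper's argument run forward rather than by contradiction: both pick uniform measures on small balls around far-apart points, apply the $CD(\VK,N)$ inequality to an optimal dynamical plan $\Pi$, bound $-S_N$ of the interpolant by $\m_{\sX}(B_R)^{1/N}<\infty$ via Jensen and Theorem \ref{useful}, and conclude that the $\tau$-integrals along $\Pi$ are finite, hence $\pi_{\sk/(\sN-1)}\geq \de_W(\mu_0,\mu_1)$. The paper instead assumes $\pi_{\sk/(\sN-1)}<\diam_{\sX}$ and notes that every $\Pi$ would then have to have infinite distortion integral, contradicting the same entropy bound; the substance is identical.
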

\begin{proof}
Assume $\pi_{\sk/(\sN-1)}<\diam_{\sX}$. Then, there are points $x,y\in X$ such that $\de_{\sX}(x,y)>c+\pi_{\sk/(\sN-1)}$ for some $c>0$. Therefore, 
we can consider $\epsilon$-balls $B_{\epsilon}(x)=A_0$ and $B_{\epsilon}(y)=A_1$ such that $$\de_{\sX}(A_0,A_1):=\inf_{x_0\in A_0,x_1\in A_1}\de_{\sX}(x_0,x_1)>\pi_{\sk/(\sN-1)}.$$ 
If we define $\mu_{0/1}=\m_{\sX}(A_{0/1})^{-1}\m_{\sX}|_{A_{0/1}}$, we
see that $\de_W(\mu_0,\mu_1)>\pi_{\sk/(\sN-1)}$. Hence for each dynamcial optimal transference plan $\Pi\in \DyCo(\mu_0,\mu_1)$
\begin{align*}
\infty\leq \int \tau_{\sk^-,\sN}^{(1-t)}(|\dot{\gamma}|)d\Pi(\gamma)\m_{\sX}(A_0)^{\frac{1}{N}}+ \int \tau_{\sk^+,\sN}^{(t)}(|\dot{\gamma}|)d\Pi(\gamma)\m_{\sX}(A_1)^{\frac{1}{N}}.
\end{align*}
But by the curvature-dimension condition the right hand side is smaller thatn
\begin{align*}
-S_{\sN}(\mu_t|\m_{\sX})\leq \m_{\sX}(A_t)^{\frac{1}{N}}\leq \m_{\sX}(B_R(o))^{\frac{1}{N}}
\end{align*}
for some $o\in X$ and $R>0$ sufficiently large such that $A_t\subset B_R(o)$. $A_t$ is the set of all $t$-midpoints between $A_0$ and $A_1$.
But by the Bishop-Gromov comparison tells us that balls have always finite measure.
\end{proof}
\begin{definition}
Fix a point $x\in X$. Since $\partial B_r(x)$ is compact, we can consider $\min_{\partial B_r(x)}\VK=\VK_x(r)$ for $r<R_x$ 
where $R_x=\sup\left\{r>0:\partial B_r(x)\neq \emptyset\right\}$. 
Let $\underline{\VK}_x$ be the lower semi-continuous envelope of $\VK_x$. It is clear that $\underline{\VK}_x\leq \VK$ and $\underline{\VK}_x$ induces a
lower semi-continuous function on $X$ - also denoted by $\underline{\VK}_x$ - via
\begin{align*}
y\mapsto \underline{\VK}_x(y):=\underline{\VK}_x(\de_{\sX}(x,y)).
\end{align*}
\end{definition}
\begin{theorem}\label{bg}
Let $X$ be a metric measure space satisfying $CD(\VK,N)$. If $N>1$ then for each $x_0\in X$, for all $0<r<R$ such that $R\leq \pi_{\underline{\sk}_x/(N-1)}$, we have
\begin{align}
\frac{s(r)}{s(R)}\geq \frac{\sin_{\underline{\sk}_x/(\sN-1)}^{\sN-1}r}{\sin_{\underline{\sk}_x/(\sN-1)}^{\sN-1}R}\ \ \&\ \ 
\frac{v(r)}{v(R)}\geq \frac{\int_0^r\sin_{\underline{\sk}_x/(\sN-1)}^{\sN-1}tdt}{\int_0^R\sin_{\underline{\sk}_x/(\sN-1)}^{\sN-1}tdt}.
\end{align}
\end{theorem}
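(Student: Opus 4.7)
The strategy is to follow the pattern used in the proof of Theorem \ref{useful}, applying the Brunn--Minkowski inequality to a small ball around $x_0$ and a thin spherical annulus at radius $R$, but this time exploiting the fact that along any geodesic emanating (approximately) from $x_0$, the curvature $\VK$ at arc-length $s$ is bounded below by $\underline{\VK}_x(s)$ rather than by a single constant. Fix $x_0 \in X$ with $\m_{\sX}(\{x_0\}) = 0$ (the alternative $\supp \m_{\sX} = \{x_0\}$ is trivial by Theorem \ref{useful}). For small $\epsilon,\delta > 0$, set $A_0 = B_\epsilon(x_0)$ and $A_1 = \bar B_{R(1+\delta)}(x_0) \setminus B_R(x_0)$, so that $A_t \subset \bar B_{(1+\delta)r + \epsilon r/R}(x_0) \setminus B_{r - \epsilon r/R}(x_0)$ for $t = r/R$.

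The key new ingredient is the following estimate on distortion coefficients. For any $\gamma \in \mathcal{G}(A_0, A_1)$ with unit-speed reparametrization $\bar\gamma : [0, |\dot\gamma|] \to X$, the triangle inequality gives $|\de_{\sX}(x_0, \bar\gamma(s)) - s| \leq \epsilon$, so the defining bound $\VK(y) \geq \underline{\VK}_x(\de_{\sX}(x_0, y))$ yields
$$\VK_\gamma(s) \ \geq\ \inf\bigl\{\underline{\VK}_x(u) : |u - s| \leq \epsilon\bigr\} =: \underline{\VK}_x^\epsilon(s)$$
on $[0, |\dot\gamma|]$. The monotonicity of distortion coefficients with respect to the curvature bound (the corollary following Proposition \ref{monotonicity}) then gives
$$\tau^{(t)}_{\VK_\gamma^+, N}(|\dot\gamma|) \ \geq\ \tau^{(t)}_{\underline{\VK}_x^\epsilon/(N-1),\, N}(|\dot\gamma|).$$
Because $\underline{\VK}_x$ is lower semi-continuous, $\underline{\VK}_x^\epsilon \uparrow \underline{\VK}_x$ pointwise as $\epsilon \to 0$, and the stability properties in Proposition \ref{contin} allow us to pass to the limit in the distortion coefficients.

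Now apply Theorem \ref{bm} to $A_0, A_1$. The first infimum term vanishes in the limit $\epsilon \to 0$ since $\m_{\sX}(\{x_0\}) = 0$ and $\m_{\sX}$ is locally finite. Using the estimate above and letting $\epsilon \to 0$, one obtains
$$\m_{\sX}\!\left(\bar B_{(1+\delta)r}(x_0) \setminus B_r(x_0)\right)^{\frac{1}{N}} \ \geq\ \tau^{(r/R)}_{\underline{\VK}_x/(N-1),\, N}\bigl((1 \pm \delta)R\bigr) \cdot \m_{\sX}(A_1)^{\frac{1}{N}},$$
which is exactly the form appearing in the proof of Theorem \ref{useful}, with the variable function $\underline{\VK}_x$ replacing the constant $\underline{\VK}$. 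Since $\tau^{(r/R)}_{\underline{\VK}_x/(N-1),\, N}(R) = (r/R)^{1/N} \bigl[\frs_{\underline{\VK}_x/(N-1)}(r)/\frs_{\underline{\VK}_x/(N-1)}(R)\bigr]^{(N-1)/N}$, the differentiation-and-limit argument of step 2 of that earlier proof applies verbatim: sending $\delta \to 0$ yields the desired Minkowski content comparison, and the volume comparison follows by integration in $r$.

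The main technical obstacle is controlling the double limit $\epsilon \to 0$, $\delta \to 0$ in this lower semi-continuous setting: one must verify that $\tau^{(t)}_{\underline{\VK}_x^\epsilon/(N-1), N}(|\dot\gamma|)$ converges (from below) to $\tau^{(t)}_{\underline{\VK}_x/(N-1), N}(R)$ sufficiently uniformly in $\gamma$ to survive the infimum in Theorem \ref{bm}. This relies on the definition of the distortion coefficients via continuous approximations $\kappa_n \uparrow \kappa$ together with Proposition \ref{contin}. The hypothesis $R \leq \pi_{\underline{\VK}_x/(N-1)}$ is precisely what guarantees that $\frs_{\underline{\VK}_x/(N-1)}$ stays positive on $(0,R]$, so the coefficients $\tau^{(t)}_{\underline{\VK}_x/(N-1), N}$ are finite throughout and the resulting inequality is non-trivial.
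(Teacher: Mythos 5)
Your approach is genuinely different from the paper's. The paper proves the result by first replacing $\VK$ with the continuous Moreau--Yosida approximation $\underline{\VK}'_{x,n}$, invoking the $CD(\underline{\VK}'_{x,n},N)$ condition directly in its integral form $\int\tau\,d\Pi_{n,\epsilon,\delta}$, and then tracking the sequence of dynamical optimal plans through a chain of compactness arguments: $\Pi_{n,\epsilon,\delta}\to\Pi_{n,\delta}$ as $\epsilon\to 0$, then $\Pi_{n,\delta}\to\Pi_n$ as $\delta\to 0$. The crucial structural observation is that the limiting plan $\Pi_n$ is supported precisely on radial geodesics emanating from $x$ with $|\dot\gamma|=R$, so that $\underline{\VK}'_{x,n}\circ\bar\gamma(s)=\underline{\VK}_{x,n}(s)$ \emph{exactly} is the function of arc-length independent of $\gamma$, and the distortion coefficient comes out of the integral. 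You instead bypass the plan compactness by applying the Brunn--Minkowski inequality (Theorem \ref{bm}), which turns the problem into controlling an infimum of distortion coefficients over $\mathcal{G}(A_0,A_1)$, and you obtain a \emph{uniform} lower bound $\VK_\gamma\geq\underline{\VK}_x^\epsilon$ via the triangle-inequality estimate $|\de_{\sX}(x_0,\bar\gamma(s))-s|\leq\epsilon$. Each route has an advantage: yours avoids the weak-compactness bookkeeping on $\mathcal{P}(\mathcal{G}(X))$, while the paper's avoids the double limit you now face.

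The place where you need to say more is the double limit in $(\epsilon,\theta)$. After your bound, the infimum in Theorem \ref{bm} becomes an infimum of $\theta\mapsto\tau^{\scriptscriptstyle{(r/R)}}_{\underline{\VK}_x^\epsilon,N}(\theta)$ over $\theta\in[R-\epsilon,(1+\delta)R+\epsilon]$, and you must let $\epsilon\to 0$ and $\delta\to 0$ and claim the result converges to $\tau^{\scriptscriptstyle{(r/R)}}_{\underline{\VK}_x,N}(R)$. Because $\underline{\VK}_x$ is only lower semi-continuous, this is not immediate: you need lower semi-continuity of $\theta\mapsto\tau^{\scriptscriptstyle{(t)}}_{\kappa,N}(\theta)$ for l.s.c.\ $\kappa$ on $[0,L]$ (to pass the shrinking interval to a point), \emph{and} you need to interleave this with the monotone convergence $\underline{\VK}_x^\epsilon\uparrow\underline{\VK}_x$. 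You cite Proposition \ref{contin}, but that statement is about continuity in the curve $\gamma$ with respect to $\de_\infty$, not in $\theta$ alone; the two are related but some work is needed to apply it here, for instance by fixing $\epsilon_0$, lower-bounding $\underline{\VK}_x^\epsilon\geq\underline{\VK}_x^{\epsilon_0}$ for $\epsilon\leq\epsilon_0$, sending $\epsilon\to 0$ and $\delta\to 0$ first (using l.s.c.\ of the distortion coefficient in $\theta$, which follows because $\tau^{\scriptscriptstyle{(t)}}_{\kappa_n,N}(\cdot)$ is continuous for the Moreau--Yosida approximants $\kappa_n\uparrow\underline{\VK}_x^{\epsilon_0}$ and a monotone limit of continuous functions is l.s.c.), and finally sending $\epsilon_0\to 0$. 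You flag this as "the main technical obstacle," which is accurate, but the proof as written does not fill it. Also a notational remark: $\tau^{\scriptscriptstyle{(t)}}_{\underline{\VK}_x^\epsilon/(N-1),N}$ should read $\tau^{\scriptscriptstyle{(t)}}_{\underline{\VK}_x^\epsilon,N}$, since by the paper's conventions the division by $N-1$ is already inside the $\sigma$ appearing in the definition of $\tau_{\cdot,N}$.
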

\begin{proof}
First
\begin{align*}
\inf_{\de_{\sX}(x,z)}\left\{\underline{\VK}_x(\de_{\sX}(x,z))+n|\de_{\sX}(x,z)-\de_{\sX}(x,y)|\right\}=\underline{\VK}_{x,n}(\de_{\sX}(x,y))
\end{align*}
and since $\underline{\VK}_{x,n}(r)\uparrow \underline{\VK}_x(r)$ we have $\underline{\VK}_{x,n}(\de_{\sX}(x,y))=:\underline{\VK}_{x,n}'(y)\uparrow \underline{\VK}_{x}(y)$.
By monotonicity with respect to the curvature function $X$ satisfies $CD(\underline{\VK}'_{x,n},N)$. Hence, if we consider $0<r<R<R_x$, and $A_i$ with $\mu_i$ for $i=0,1$ as
in Theorem \ref{useful} (replace $x_0$ by $x$), we obtain
\begin{align*}
&\m_{\sX}({\textstyle\bar{B}_{(1+\delta+\epsilon)r}(x)\backslash B_{(1-\epsilon)r}(x)})^{\scriptscriptstyle{\frac{1}{N}}}\! \geq\! \int\!\tau_{\underline{\VK}_{x,n,\gamma},N}^{\sscr{(r/R)}}(|\dot{\gamma}|)d\Pi_{n,\epsilon,\delta}(\gamma)\m_{\sX}({\textstyle \bar{B}_{(1+\delta)R}(x)\backslash B_R(x)})^{\scriptscriptstyle{\frac{1}{N}}}
\\
&\hspace{1cm} 
+ \int\tau_{\underline{\VK}_{x,n,\gamma},N}^{\sscr{(1-r/R)}}(|\dot{\gamma}|)d\Pi_{n,\epsilon,\delta}(\gamma)\m_{\sX}(\bar{B}_{\epsilon}(x))^{\scriptscriptstyle{\frac{1}{N}}}
\end{align*}
where $\Pi_{n,\epsilon,\delta}$ is an optimal dynamical plan between $\mu_0$ and $\mu_1$. Since the left hand side is finite, 
the right hand side is uniformily bounded and 
the distortion coefficients are finite almost everywhere.
If $\epsilon\rightarrow 0$, compactness of closed balls implies that we can find 
a subsequence of $\Pi_{n,\epsilon,\delta}$ that converges to $\Pi_{n,\delta}$ for $n\rightarrow \infty$ and with $(e_0)_*\Pi_{n,\delta}=\delta_x$. 
The previous inequality becomes
\begin{align*}
\m_{\sX}(\bar{B}_{(1+\delta)r}(x)\backslash B_{r}(x))\geq \left(\int\tau_{\underline{\VK}_{x,n,\gamma},N}^{\sscr{(r/R)}}(|\dot{\gamma}|)d\Pi_{n,\epsilon,\delta}(\gamma)\right)^N\m_{\sX}(\bar{B}_{(1+\delta)R}(x)\backslash B_R(x))
\end{align*}
We remark that $\gamma\mapsto\tau_{\underline{\VK}_{x,n,\gamma},N}^{\sscr{(r/R)}}(|\dot{\gamma}|)$ is bounded and continuous for geodesics $\gamma$ in a sufficiently large ball.
Similar, if $\delta$ goes to $0$, we can take another sub-sequence of $\Pi_{n,\delta}$ that converges to $\Pi_n$. If we devide both side by $\delta r$ and take $\delta \rightarrow 0$, 
the previous inequality becomes 
\begin{align*}
s_x(r)\geq \left(\int\sigma_{\underline{\VK}_{x,n,\gamma}/(N-1)}^{\sscr{(r/R)}}(|\dot{\gamma}|)d\Pi_{n}(\gamma)\right)^Ns_x(R).
\end{align*}
$(e_0)_*\Pi_n=\delta_x$ and $(e_1)_*\Pi_n$ is a probability measure with $(e_1)_*\Pi_n(\partial B_R(x))=1$. Hence $\Pi_n$ is supported on geodesics with 
$\gamma(0)=x$ and $|\dot{\gamma}|=R$, and by definition of $\underline{\VK}_{x,n}'$ we have that $\underline{\VK}_{x,n}'\circ\bar{\gamma}=\underline{\VK}_{x,n}'(\cdot R)$ is independent of $\gamma$. Therefore
\begin{align*}
\frac{s_x(r)}{s_x(R)}\geq \sigma_{\underline{\VK}_{x,n}/(N-1)}^{\sscr{(r/R)}}(R)^{N-1}.
\end{align*}
Now, take $n\rightarrow \infty$. Since $\underline{\VK}_{x,n}\uparrow \underline{\VK}_x$, 
one can check as in Lemma \ref{gr} that - after choosing another subsequence - $\frs_{\underline{\VK}_{x,n}}\downarrow \frs_{\underline{\VK}_x}$. 
This is the first claim. The second one follows as in Theorem \ref{useful}.
\end{proof}

\begin{theorem}\label{stheorem}
Let $(X,\de_{\sX},\m_{\sX})$ be a metric measure space satisfying $CD(\VK,N)$ for $N>1$. Assume there is point $x_0\in X$, a constant $c>\frac{N-1}{4}$
and $R>0$ such that
\begin{align*}
\VK(x)\geq c\de_{\sX}(p,x)^{-2}\ \ \mbox{ for all }x\in X \mbox{ with }\de_{\sX}(p,x)>R.
\end{align*}
Then $X$ is compact.
\end{theorem}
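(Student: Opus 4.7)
The plan is to derive a contradiction from the assumption that $X$ is non-compact, by exploiting that $c > (N-1)/4$ is precisely the oscillation threshold for the Euler equation $u'' + (c/((N-1)t^2))\,u = 0$: its solutions behave like $\sqrt{t}\cos(\omega\log t - \phi)$ with $\omega = \sqrt{c/(N-1) - 1/4} > 0$, so they have infinitely many zeros accumulating at $\infty$. Via Sturm's comparison theorem, along any long geodesic starting near $p$ the generalized sine function $\frs_{\VK_\gamma/(N-1)}$ is forced to vanish before some universal finite length $L^*$; this makes $\tau^{(t)}_{\VK^\pm_\gamma, N}(|\dot\gamma|) = \infty$, which is incompatible with the finite left-hand side of \eqref{curvaturedimension} once the endpoints of the transport are taken sufficiently far apart.

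Concretely, I would proceed as follows. By Theorem \ref{useful}, $(X,\de_\sX)$ is proper; if it were non-compact an Arzel\`a--Ascoli argument on geodesics from $p$ to points of arbitrarily large distance yields a unit-speed geodesic ray $\gamma_\infty:[0,\infty)\to X$ with $\gamma_\infty(0)=p$. Fix $\epsilon\in(0,1)$ and, using lower semicontinuity on the compact ball $\bar{B}_{R+2\epsilon}(p)$, choose $K>0$ with $\VK \geq -K$ there. Define a model coefficient $\tilde\kappa$ equal to $-K/(N-1)$ on $[0,R+\epsilon]$ and to $c/((N-1)(s+\epsilon)^2)$ on $(R+\epsilon,\infty)$: explicit integration (hyperbolic sine on the first interval, Euler equation on the second) produces a first zero of $\frs_{\tilde\kappa}$ at some finite $L^*>0$. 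Choose $L>L^*+2\epsilon$, set $A_0=B_\epsilon(p)$ and $A_1=B_\epsilon(\gamma_\infty(L))$ (both of positive, finite mass by Theorem \ref{useful}), and apply $CD(\VK,N)$ to $\mu_i:=\m_\sX(A_i)^{-1}\m_\sX|_{A_i}$ to obtain a dynamical optimal coupling $\Pi$ and a geodesic $(\nu_t)$ satisfying \eqref{curvaturedimension}.

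For $\Pi$-a.e.\ geodesic $\alpha$, the triangle inequality gives $L_\alpha:=|\dot\alpha|\geq L-2\epsilon>L^*$ and $\de_\sX(p,\bar\alpha(s))\leq s+\epsilon$, so for $s>R+\epsilon$ the hypothesis yields $\VK(\bar\alpha(s))\geq c/(s+\epsilon)^2$, while for $s\leq R+\epsilon$ the curve stays in $\bar{B}_{R+2\epsilon}(p)$ and $\VK(\bar\alpha(s))\geq -K$ holds. Hence $\VK_\alpha/(N-1)\geq\tilde\kappa$ pointwise, and Sturm's comparison theorem forces $\frs_{\VK_\alpha/(N-1)}$ to vanish somewhere in $(0,L^*]\subset(0,L_\alpha)$. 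Consequently $\sigma^{(t)}_{\VK_\alpha,N-1}(L_\alpha)=\infty$ and $\tau^{(t)}_{\VK^\pm_\alpha,N}(L_\alpha)=\infty$ for every $t\in(0,1)$, so the right-hand side of \eqref{curvaturedimension} equals $-\infty$. However $\supp\nu_t\subset\bar{B}_{L+\epsilon}(p)$ has finite measure by Theorem \ref{useful}, and H\"older's inequality gives $S_N(\nu_t)\geq -\m_\sX(\bar{B}_{L+\epsilon}(p))^{1/N}>-\infty$, a contradiction; hence $X$ is compact.

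The main obstacle is controlling the generalized sine function along \emph{every} geodesic appearing in the optimal coupling $\Pi$, not just along the auxiliary ray $\gamma_\infty$. The curvature hypothesis is anchored at the fixed point $p$, whereas the geodesics of $\Pi$ begin at nearby but distinct points in $A_0$; the estimate $\de_\sX(p,\bar\alpha(s))\leq s+\epsilon$ is what rescues a uniform comparison coefficient $\tilde\kappa$ valid for all such $\alpha$. A secondary technical issue is extending $\tilde\kappa$ across the region $[0,R+\epsilon]$, where the hypothesis gives no information; this is handled by the local lower bound on $\VK$ built into the admissibility assumption, together with Sturm's theorem being insensitive to the behavior of $\tilde\kappa$ on this bounded initial interval.
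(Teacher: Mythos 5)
Your proof is correct and uses the same core mechanism as the paper: along a sufficiently long transport geodesic, the Euler-type comparison coefficient coming from $\VK \geq c\,\de_{\sX}(p,\cdot)^{-2}$ with $c/(N-1)>1/4$ forces the generalized sine function $\frs_{\VK_{\gamma}/(N-1)}$ to vanish by Sturm comparison, so the distortion coefficients $\tau^{(t)}_{\VK^{\pm}_{\gamma},N}$ become infinite and inequality (\ref{curvaturedimension}) collapses against the finite entropy of the midpoint measure. The bookkeeping is arranged differently, and harmlessly so: you transport \emph{outward} from $B_{\epsilon}(p)$ toward a point far along a geodesic ray, disposing of the initial segment $[0,R+\epsilon]$ (where the hypothesis is silent) by inserting the local lower bound $\VK\geq -K$ into a piecewise comparison coefficient $\tilde{\kappa}$; the paper instead transports \emph{inward} from a point $q$ at distance $>(R+\delta)e^{\pi/\alpha}$, so that the explicit Euler sine has already hit its second zero while the geodesic is still at radius $>R$, rendering any local bound near $p$ unnecessary. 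One step you state only half of: for $s>R+\epsilon$ you use $\de_{\sX}(p,\bar{\alpha}(s))\leq s+\epsilon$ to get $c/\de_{\sX}(p,\bar{\alpha}(s))^2\geq c/(s+\epsilon)^2$, but to invoke the curvature hypothesis at all you also need $\de_{\sX}(p,\bar{\alpha}(s))>R$; this follows from the complementary inequality $\de_{\sX}(p,\bar{\alpha}(s))\geq \de_{\sX}(\bar{\alpha}(0),\bar{\alpha}(s))-\de_{\sX}(p,\bar{\alpha}(0))=s-\de_{\sX}(p,\bar{\alpha}(0))>s-\epsilon>R$, and it is worth stating explicitly because it is exactly what makes the single coefficient $\tilde{\kappa}$ a valid comparison along \emph{every} geodesic of the dynamical coupling, not merely along the auxiliary ray $\gamma_{\infty}$.
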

\begin{proof}
Choose $\alpha>0$ such that 
\begin{align*}
{\textstyle \frac{1}{4}}(N-1)<\left(\textstyle{\frac{1}{4}}+\alpha^2\right)(N-1)<c
\end{align*}
Assume $(X,\de_{\sX},\m_{\sX})$ is not compact. 
Then we can find a point $q\in X$ such that $\de_{\sX}(p,q)>(R+\delta)e^{\frac{\pi}{\alpha}}$ for some $0<\delta<R$.
We choose $\delta>0$ and $\epsilon>0$ such that $2\epsilon(2-e^{-\frac{\pi}{\alpha}})<\delta$ 
and 
\begin{align*}
\min_{x\in B_{\epsilon}(p),y\in B_{\epsilon}(q)}\de_{\sX}(x,y)=:\de_{\sX}(\bar{B}_{\epsilon}(q),\bar{B}_{\epsilon}(p))>(R+\delta)e^{\frac{\pi}{\alpha}}.
\end{align*}
We set $\bar{B}_{\epsilon}(q)=:A_0$ and $\bar{B}_{\epsilon}(p)=:A_1$ and define probability measures $$\mu_{i}={\m_{\sX}(A_i)^{-1}}\mu_{\sX}|_{A_i}$$
where $i=0,1$.
Let $q'\in \bar{B}_{\epsilon}(q)$ and $p'\in \bar{B}_{\epsilon}(p)$. We consider a geodesic $\gamma:[0,1]\rightarrow X$ between $q'$ and $p'$ and 
estimate the curvature along $\gamma$ as follows. Let $\bar{\gamma}$ be the unit speed reparametrization of $\gamma$. For $0<t<[\de_{\sX}({q'},{p'})+2\epsilon](1-e^{-\frac{\pi}{\alpha}})$ we have
\begin{align*}
\de_{\sX}(p,\bar{\gamma}(t))&\geq \de_{\sX}(p',\gamma(t))-\de_{\sX}(p,p')\geq[\de_{\sX}(p',q')-t]-\epsilon\\
&>\de_{\sX}(q',p')e^{-\frac{\pi}{\alpha}}-2\epsilon(1-e^{-\frac{\pi}{\alpha}})-\epsilon\\
&\geq(R+\delta)-\epsilon(2-e^{-\frac{\pi}{\alpha}})>R
\end{align*}
Therefore
\begin{align*}
\VK(\bar{\gamma}(t))&\geq \frac{c}{\de_{\sX}(p,\bar{\gamma}(t))^{2}}\geq\frac{c}{(\de_{\sX}({p},{p'})+\de_{\sX}(p',\bar{\gamma}(t))^{2}}\\
&\geq (\alpha^2+{\frac{1}{4}})(N-1)\frac{1}{(\epsilon+\de_{\sX}(q',p')-t)^2}=:\kappa(t)(N-1)
\end{align*}
We obtain a lower estimate for the modified distortion coefficient along $\gamma$. The generalized $\sin$-function $\frs_{\VK\circ\bar{\gamma}/(N-1)}$ is bounded from below by $\frs_{\kappa}$ which is given explicetly by
\begin{align*}
\frs_{\kappa}(t)=C\sqrt{\epsilon+\de_{\sX}({p'},{q'})-t)}\sin\left[\alpha \log\left({\textstyle\frac{\epsilon+\de_{\sX}(q',p')-t}{\left(\epsilon+\de_{\sX}(q',p')\right)e^{-\pi/\alpha}}}\right)\right].
\end{align*}
where $C$ is a normalization constant. We see that the second zero of $\frs_{\kappa}$ appears at 
$$(\epsilon+\de_{\sX}({q'},{p'}))(1-e^{-\frac{\pi}{\alpha}})<\de_{\sX}({q'},{p'})-R+\epsilon(1-e^{-\frac{\pi}{\alpha}})<\de_{\sX}({q'},{p'}).$$
Therefore, the second zero of $\frs_{\VK\circ\bar{\gamma}}$ appears strictly before $t=\de_{\sX}({q},{p})$.
Consequently $$\sigma_{\VK\circ\gamma,N-1}^{(t)}(\theta)\geq \sigma_{\kappa}^{(t)}(\theta)=\infty.$$
We conclude that 
\begin{align*}
\m_{\sX}(A_t)^{\frac{1}{N}}&\geq {\int\tau_{\VK^{\sscr{-}}_{\gamma},N'}^{\sscr{(1-t)}}(|\dot{\gamma}|)d\Pi(\gamma)}\m_{\sX}\left(A_0\right)^{\frac{1}{N'}}+{\int\tau_{\VK^{\sscr{+}}_{\gamma},N'}^{\sscr{(t)}}(|\dot{\gamma}|)d\Pi(\gamma)}\m_{\sX}\left(A_1\right)^{\frac{1}{N'}}{=\infty}.
\end{align*}
$A_t$ is again the set of all $t$-midpoints between $A_0$ and $A_1$, and $\Pi$ is an optimal dynamical transference for $\mu_0$ and $\mu_1$. As in the previous Proposition this yields a contradiction.
Hence, $X$ is compact.
\end{proof}
\begin{example}
The previous theorem is sharp in the sense that one can not improve the result by replacing the lower bound $\frac{1}{4}(N-1)$ for $c$ by a smaller lower bound. For instance,
consider 
\begin{align*}
([0,\infty),|\cdot|_2,\left(\sqrt{r}\right)^{\sN-1}dr).
\end{align*}
Using Theorem \ref{smoothcase} and Proposition \ref{prop73}
one can check that it satifies the curvature-dimension $CD(\VK,N)$ for 
\begin{align*}
\VK(r)= \frac{1}{4}(N-1)r^{-2}
\end{align*}
$\VK$ satisfies the assumption of the theorem for $c=\frac{1}{4}(N-1)$ and any $p\in [0,\infty)$ since
$
\VK(r)\sim\frac{1}{4}(N-1)|r-p|_2^{-2} 
$
for $r>0$ sufficiently large
but one cannot find a point $p\in[0,\infty)$, $c>\frac{1}{4}(N-1)$ and $R>0$ such that $\VK(r)r^2\geq c$ for $r>0$ with $|r-p|_2\geq R$. 
A Riemannian manifold of geometric dimension $N$ satisfying this property can be constructed via warped products.
\end{example}

%
\section{Stability}
\paragraph{\textbf{Measured Gromov-Hausdorff convergence.}}
A rectifiable curve $\gamma:[0,1]\rightarrow X$ with constant speed parametrization 
is called $\epsilon$-geodesic if $\mbox{L}(\gamma)-\epsilon<\de_{\sX}(\gamma(0),\gamma(1))$.
The family of all $\epsilon$-geodesics is denoted with $\mathcal{G}^{\epsilon}(X)$, and it is equipped with the topology that comes from
$\de_{\infty}(\gamma,\tilde{\gamma})=\sup_{t}\de_{\sX}(\gamma(t),\tilde{\gamma}(t))$. Measurability is understood in the sense of this topology.
Obviously, we have $\mathcal{G}^{\epsilon}(X)\subset \mathcal{G}^{\eta}(X)$ if $\epsilon\leq \eta$ and $\mathcal{G}^{0}(X)=\mathcal{G}(X)$.
If $X$ is compact, then $\mathcal{G}^{\epsilon}(X)$ is compact with respect to $\de_{\infty}$ by suitable version of the Arzela-Ascoli theorem.
\smallskip

We need an extension of the notion of dynamical transference plan on $\mathcal{G}(X)$.
The evaluation map $e_t:\gamma \mapsto \gamma(t)$ is continuous and measurable.
A probability measure $\Pi$ on $\mathcal{G}^{\epsilon}(X)$ is called \textit{dynamical transference plan} between $(e_0)_{\star}\Pi$ and $(e_1)_{\star}\Pi$.
If $\VK:X\rightarrow \mathbb{R}$ is an admissible function, we can consider $\VK_{\gamma}$ for $\gamma\in\mathcal{G}^{\epsilon}(X)$ and the corresponding 
generalized $\sin$-function and the modified distortion coefficient.
One can check that 
$\gamma\mapsto \tau_{\sk^+_{\gamma},\sN}^{\sscr{(t)}}(|\dot{\gamma}|)$
is measurable on $\mathcal{G}^{\epsilon}(X)$.
\begin{definition}
Let $(X,\de_{\sX})$ be a metric space which is separable, complete and compact. A subset $D\subset X$ is $\epsilon$-dense for $\epsilon>0$ if $B_{\epsilon}(D)=X$.
\end{definition}
\begin{definition}
Let $(X,\de_{\sX})$ and $(Y,\de_{\sY})$ be metric spaces. A map $f:X\rightarrow Y$ is called $\epsilon$-isometry from $X$ to $Y$ if $f(X)$ is $\epsilon$-dense in $Y$ and for any pair $x,y\in X$
\begin{align*}
|\de_{\sX_i}(x,y)-\de_{\sX}(f_i(x),f_i(y))|<\epsilon.
\end{align*}
We say that $X$ and $Y$ have finite Gromov-Hausdorff distance if there exist an $\epsilon$-isometry from $X$ to $Y$.
\end{definition}
\begin{definition}
A sequence $(X_i,\de_{\sX_i})_{i\in\mathbb{N}}$ of compact metric spaces converges in Gromov-Hausdorff sense to a metric space $(X,\de_{\sX})$ 
if for each $i\in\mathbb{N}$ there exist $\epsilon_i>0$ and an $\epsilon_i$-isometry $f_i:X_i\rightarrow X$ such that $\epsilon_i\rightarrow 0$ for $i\rightarrow \infty$. A sequence of metric measure spaces
$(X_i,\de_{\sX_i},\m_{\sX_i})$ converges in measured Gromov-Hausdorff sense to a metric measure space $(X,\de_{\sX},\m_{\sX})$ if the corresponding metric spaces converge in Gromov-Hausdorff sense and
\begin{align*}
(f_i)_{\star}\m_{\sX_i}\longrightarrow \m_{\sX} \ \ \mbox{ with respect to weak convergence.}
\end{align*}
\end{definition}
\begin{remark}\label{ao}
For fixed $i\in\mathbb{N}$ 
the existence of an $\epsilon_i$-isometry as in the previous definition implies the existence of a metric space $(Z,\de_{\sZ})$ and isometric embeddings
$\iota_{i},\iota:(X_i,\de_{\sX_i}),(X,\de_{\sX})\rightarrow (Z,\de_{Z})$ such that $\iota_{i}(X_i)$ and 
$\iota(X)$ are $2\epsilon_i$-close w.r.t. the Hausdorff distance (see Corollary 
7.3.28 in \cite{bbi}). More precisely, we can choose $Z$ as the disjoint union of $X_i$ and $X$, and 
$\de_{\sZ}|_{X_i^2}=\de_{\sX_i}$, $\de_{\sZ}|_{X^2}=\de_{\sX}$ and 
\begin{align*}
\de_{\sZ}(z_1,z_2)=\epsilon_i+\inf_{x\in X_i}\left[\de_{\sX}(z_1,f_i(x))+\de_{\sX_i}(x,z_2)\right]&\mbox{ if }z_1\in X,z_2\in X_i.
\end{align*}
In particular, $\de_{\sZ}(x,f_i(x))=\epsilon_i$.
Additionally, $f_i$ induces a coupling between $\m_{\sX_i}$ and $(f_i)_{\star}\m_{\sX_i}$ such that
\begin{align*}
\int_{Z^2}\de_{\sZ}(x,y)^2d\bar{q}_i(x,y)\leq\left\|\de_{\sZ}(x,y)\right\|^2_{L^{\infty}(Z,\bar{q}_i)}<\epsilon_i^2
\end{align*}
where $\bar{q}_i=(\mbox{id}_{\sX_i},f_i)_{\star}\m_{\sX_i}$. The previous estimate can be found for instance in the proof of Lemma 3.18 in \cite{stugeo1}.
In the following, if $i$ is fixed, we will always
identify $(X_i,\de_{\sX_i})$ and $(X,\de_{\sX})$ with their embeddings in $Z$, and $\m_{\sX_i}$ and $\m_{\sX}$ with their pushfowards with respect to $\iota_{\sX_i}$ and $\iota_{\sX}$ 
respectively.
Therefore, $f_i$ yields an $L^{\infty}$-coupling between $\m_{\sX_i}$ and $(f_i)_{\star}\m_{\sX_i}$ in $(Z,\de_{\sZ})$.
\end{remark}
\begin{proposition}[\cite{lottvillaniweakcurvature}]
Let $(X,\de_{\sX})$ be a compact length space. Then for all $\epsilon>0$ there exists $\delta>0$ with the following property. 
If $(Y,\de_{\sY})$ is compact length space, $f:Y\rightarrow X$ is a $\delta$-isometry and $\gamma:[0,1]\rightarrow Y$ is
a geodesic, then there exists a geodesic $\gamma':[0,1]\rightarrow X$ such that $\de_{\infty}(\gamma',f(\gamma))<\epsilon$. 
Additionally, one can choose $\gamma\mapsto \gamma'$ to be a measurable map from $\mathcal{G}(Y)$ to $\mathcal{G}(X)$.
\end{proposition}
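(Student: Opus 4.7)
The plan is to establish the existence of $\gamma'$ by an Arzela--Ascoli compactness argument run by contradiction, and then derive the measurable selection from the Kuratowski--Ryll-Nardzewski theorem.

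For the existence, suppose the claim fails. Then there is $\epsilon_0>0$, a sequence of compact length spaces $(Y_n,\de_{\sY_n})$, $\delta_n$-isometries $f_n:Y_n\to X$ with $\delta_n\downarrow 0$, and geodesics $\gamma_n\in\mathcal{G}(Y_n)$ with
$$\de_\infty(\gamma',f_n\circ\gamma_n)\ge\epsilon_0\qquad\text{for every }\gamma'\in\mathcal{G}(X).$$
Since $\diam Y_n\le\diam X+\delta_n$ is uniformly bounded, so are the speeds $\mathrm{L}(\gamma_n)$; call a bound $D$. Setting $\phi_n:=f_n\circ\gamma_n$, the $\delta_n$-isometry property gives
$$\bigl|\de_{\sX}(\phi_n(s),\phi_n(t))-|s-t|\,\mathrm{L}(\gamma_n)\bigr|\le\delta_n.$$
Because $\phi_n$ need not be continuous, I would replace it by a piecewise-geodesic curve $\beta_n\in\mathcal{LC}(X)$ interpolating the points $\phi_n(k/N_n)$ at scale $N_n\approx 1/\sqrt{\delta_n}$, using that $X$ is a compact length space and hence geodesic; the displayed estimate yields $\de_\infty(\beta_n,\phi_n)\to 0$, and the $\beta_n$ are equi-Lipschitz with constant $D+o(1)$. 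Passing to a subsequence one gets $\mathrm{L}(\gamma_n)\to L^*\in[0,D]$ and $\beta_n\to\phi$ uniformly, with $\phi:[0,1]\to X$ satisfying $\de_{\sX}(\phi(s),\phi(t))=|s-t|L^*$, i.e.\ $\phi\in\mathcal{G}(X)$. But then $\de_\infty(\phi,\phi_n)<\epsilon_0$ for large $n$, contradicting the hypothesis.

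For the measurable version, fix the target accuracy $\epsilon>0$, take the corresponding $\delta$ from the existence step (applied with $\epsilon/2$), and let $f:Y\to X$ be a $\delta$-isometry. Consider the set-valued map $\Phi:\mathcal{G}(Y)\to 2^{\mathcal{G}(X)}$ defined by $\Phi(\gamma):=\{\gamma'\in\mathcal{G}(X):\de_\infty(\gamma',f\circ\gamma)\le\epsilon/2\}$. Each $\Phi(\gamma)$ is non-empty by the existence argument and closed in the compact Polish space $\mathcal{G}(X)$, since $\gamma'\mapsto\de_\infty(\gamma',f\circ\gamma)$ is continuous. The evaluation $\gamma\mapsto\gamma(t)$ is continuous on $\mathcal{G}(Y)$ and $f$ may be assumed Borel (one replaces it by a Borel representative), so $\gamma\mapsto f\circ\gamma$ is Borel as a map into $X^{[0,1]}$; a routine check then shows $\{\gamma:\Phi(\gamma)\cap U\ne\emptyset\}$ is Borel for every open $U\subset\mathcal{G}(X)$. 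The Kuratowski--Ryll-Nardzewski selection theorem then provides a Borel selection $\gamma\mapsto\gamma'(\gamma)\in\Phi(\gamma)$, which is the desired measurable map.

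The main obstacle is that a $\delta$-isometry need not be continuous, so $f\circ\gamma$ is only an approximately Lipschitz map $[0,1]\to X$, not a continuous one. This complicates both the Arzela--Ascoli step and the measurability hypothesis of the selection theorem. The remedy in both cases is to replace $f\circ\gamma$ by a continuous object built from the length-space structure of $X$: piecewise geodesic interpolants at a scale coupled to $\delta$ for the existence argument, and a Borel representative of $f$ for the selection step.
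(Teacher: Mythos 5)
The paper states this proposition with an attribution to \cite{lottvillaniweakcurvature} and supplies no proof of its own, so there is no in-paper argument to compare against. Evaluating your proposal on its own terms: the existence half is sound. The contradiction setup, the piecewise-geodesic interpolants $\beta_n$ at scale $N_n\sim1/\sqrt{\delta_n}$, the equi-Lipschitz bound via $\mathrm{L}(\beta_n)\leq D+N_n\delta_n$, and the identification of the uniform limit $\phi$ as a constant-speed geodesic all go through; this is indeed the standard Arzel\`a--Ascoli argument and is the natural route.

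The measurability half is correct in spirit but is where you hand-wave on nontrivial points. First, the phrase ``one replaces $f$ by a Borel representative'' needs an actual construction: a $\delta$-isometry carries no regularity, so you should say, e.g., that you pass to a step map $g$ constant on the cells of a Borel partition subordinate to a finite $\delta$-net of $Y$, with $g$ taking values in $\{f(y_i)\}$; then $\de_{\sX}(f(y),g(y))<2\delta$ for all $y$, $g$ is a $5\delta$-isometry, and a Borel selection for the multifunction built from $g$ also serves $f$ after shrinking $\delta$. Second, your Kuratowski--Ryll-Nardzewski invocation requires lower measurability of $\Phi$, and the ``routine check'' is a projection of a Borel set, which is a priori only analytic; the clean fix is either to quote a selection theorem for compact-valued multifunctions with Borel graph into a compact Polish space, or, more elementarily, to sidestep the selection machinery entirely: fix a countable dense $\{\gamma'_k\}\subset\mathcal{G}(X)$ and set $\gamma'(\gamma):=\gamma'_{n(\gamma)}$ where $n(\gamma)$ is the least $k$ with $\de_{\infty}(\gamma'_k,g\circ\gamma)<\epsilon$; since $\de_{\infty}(\gamma'_k,g\circ\gamma)=\sup_{t\in\mathbb{Q}\cap[0,1]}\de_{\sX}(\gamma'_k(t),g(\gamma(t)))$ is Borel in $\gamma$ for each $k$, and the set of admissible $\gamma'$ is open and nonempty by the existence step, this is a Borel map with the required property.
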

\begin{remark}
Let $\epsilon$, $\delta$, $Y$, $f$ and $\gamma$ as in the previous proposition, and we choose $Z$ such that $X,Y$ embed into $Z$.
Then $\de_{\infty}(\gamma,\gamma')\leq \epsilon+\delta$ where $\de_{\infty}$ is w.r.t. $\de_{\sZ}$.
\end{remark}

\begin{remark}\label{lottvillani}
Let $(X_i,\de_{\sX_i})$ be a sequence that converges in Gromov-Hausdorff sense to $(X,\de_{\sX})$. By the previous proposition
for each $\epsilon>0$ there exists $i_{\epsilon}\in \mathbb{N}$ such that for $i\geq i_{\epsilon}$ and for each $\gamma\in\mathcal{G}(X_i)$, 
one can finde a constant speed curve
$\tilde{\gamma}_i:[0,1]\rightarrow X$ with endpoints
$f_i(\gamma(0))=\tilde{\gamma}_i(0)$ and $f_i(\gamma(1))=\tilde{\gamma}_i(1)$ such that $\tilde{\gamma}_i$ and 
$\gamma$ are $(\epsilon+3\epsilon_i)$-close with respect to $\de_{\infty}$ in $Z$ and 
\begin{align}\label{pr}
\mbox{L}(\tilde{\gamma}_i)\leq 
\de_{\sX}(\tilde{\gamma}_i(0),\tilde{\gamma}_i(1))+2\epsilon_i.\end{align}
$\tilde{\gamma}_i$ is given by $\Psi(f(\gamma(0)),\gamma'(0))*\gamma'*\Psi(\gamma'(1),f(\gamma(1))):[0,3]\rightarrow X$ where $\gamma'$ 
is the curve from the previous proposition.
Here, the operator $*$ denotes the catenation of curves. More precisely, we define a rectifiable curve $c:[0,3]\rightarrow X$ via
\begin{align*}
c(t)=\begin{cases}
                                                                   \Psi(f(\gamma(0)),\gamma'(0))(t)& \mbox{ if }t\in[0,1]\\
                                                                   \gamma'(t-1)& \mbox{ if }t\in[1,2]\\
                                                                   \Psi(\gamma'(1),f(\gamma(1)))(t-2)&\mbox{ if } t\in[2,3]
                                                                   \end{cases}
\end{align*}
and then we define $\Psi(f(\gamma(0)),\gamma'(0))*\gamma'*\Psi(\gamma'(1),f(\gamma(1)):[0,1]\rightarrow X$ as the constant speed reparametrization of $c$.
The map
$\Phi_i: \mathcal{G}(X_i)\rightarrow \mathcal{G}^{2\epsilon_i}(X)$ with $i\geq i_{\epsilon}$ and 
$\gamma\mapsto \tilde{\gamma}_i=:\Phi_i(\gamma)$ can be chosen measurable. In the following we will choose $i_{\epsilon}$
such that $3\epsilon_i<\epsilon$. Therefore, $\gamma$ is $2\epsilon$-close to $\Phi_i(\gamma)$ in $X$.
\end{remark}
\begin{definition}\label{\VK}
Let $(X_i,\de_{\sX_i})$ be metric measure spaces converging to a metric space $(X,\de_{\sX})$ in
Gromov-Hausdorff sense. Let $\VK_i,\VK:X_i,X\rightarrow \mathbb{R}$ be admissible functions. We say
\begin{align*}
\liminf_{i\rightarrow\infty}\VK_i\geq \VK
\end{align*}
if for each $\eta>0$ there exists $i_{\eta}\in\mathbb{N}$ such that $\VK_i(x)\geq \VK(f_i(x))-\eta$ if $i\geq i_{\eta}$ for each $x\in X_i$.
\end{definition}
\paragraph{\textbf{Stability of the curvature-dimension condition}}
\begin{theorem}\label{stabilitytheorem}
For $i\in{\mathbb{N}}$ let $(X_i,\de_{\sX_i},\m_{\sX_i})$ be metric measure spaces that satisfy $CD(\VK_i,N_i)$ respectively for admissible functions $\VK_i$ and $N_i\in[1,\infty)$. 
Assume $X_i$ converges to $(X,\de_{\sX},\m_{\sX})$ 
in measured 
Gromov-Hausdorff sense, and consider an admissible function $\VK:X\rightarrow \mathbb{R}$ and $N\in[1,\infty)$ such that 
\begin{align*}
\liminf_{i\rightarrow\infty}\VK_i\geq \VK\ \ \& \ \ \limsup_{i\rightarrow \infty}N_i\leq N\ \ \& \ \ \diam_{\sX_i}\leq L
\end{align*}
Then $(X,\de_{\sX},\m_{\sX})$ satisfies $CD(\VK,N)$.
\end{theorem}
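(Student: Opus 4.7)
By a standard density argument (reducing to bounded continuous densities via Theorem \ref{bm}) it suffices to consider $\nu_0=\varrho_0\m_{\sX}$ and $\nu_1=\varrho_1\m_{\sX}$ with bounded continuous densities; the uniform diameter bound $\diam_{\sX_i}\le L$ forces $X$ itself to be compact of diameter $\le L$. Using the $\epsilon_i$-isometries $f_i$ and the $L^\infty$-couplings between $\m_{\sX_i}$ and $\m_{\sX}$ from Remark \ref{ao}, I lift $\nu_0,\nu_1$ to approximations $\nu_0^i,\nu_1^i\in\mathcal{P}_2(X_i,\m_{\sX_i})$ whose densities stay uniformly bounded and with $\nu_j^i\rightharpoonup\nu_j$. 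Applying $CD(\VK_i,N_i)$ yields dynamical optimal plans $\Pi^i\in\mathcal{P}(\mathcal{G}(X_i))$ satisfying (\ref{curvaturedimension}) for every $N'\ge N_i$, and pushing these forward via the measurable map $\Phi_i:\mathcal{G}(X_i)\to\mathcal{G}^{2\epsilon_i}(X)$ of Remark \ref{lottvillani} produces $\tilde\Pi^i\in\mathcal{P}(\mathcal{G}^{2\epsilon_i}(X))$.

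\textbf{Extracting the limit plan.} Since $X$ is compact and every curve in $\supp\tilde\Pi^i$ is Lipschitz with constant $\le L+2\epsilon_i$, Arzelà–Ascoli and Prokhorov give a weakly convergent subsequence $\tilde\Pi^i\rightharpoonup\Pi$ on $\mathcal{LC}(X)$; since $\epsilon_i\to 0$, every curve in $\supp\Pi$ is length-minimizing, hence $\Pi\in\mathcal{P}(\mathcal{G}(X))$. By Remark \ref{ao}, $(e_j)_\star\tilde\Pi^i\rightharpoonup\nu_j$, and lower semi-continuity of $\de_W$ together with the length estimate (\ref{pr}) shows that $\Pi$ is a dynamical optimal coupling of $\nu_0$ and $\nu_1$. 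Setting $\mu_t=(e_t)_\star\Pi$ defines the candidate Wasserstein geodesic on $X$.

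\textbf{Passing to the limit in (\ref{curvaturedimension}).} This is the crux. The left-hand side is controlled by weak lower semi-continuity of $S_{N'}(\cdot|\m_{\sX})$, using the uniform density bounds on $\mu_t^i$ inherited from the curvature-dimension condition on $X_i$ together with the doubling property of Theorem \ref{useful}. For the right-hand side I combine Propositions \ref{monotonicity}, \ref{continuity} and \ref{contin}: the hypothesis $\liminf_i\VK_i\ge\VK$ together with $N_i\to N$ yields, for each $\eta>0$ and all sufficiently large $i$, the pointwise lower bound
\begin{align*}
\tau_{\VK_{i,\gamma}^{\sscr{\pm}},N_i}^{\sscr{(t)}}(|\dot\gamma|)\ge\tau_{(\VK-\eta)_{\Phi_i(\gamma)}^{\sscr{\pm}},N'}^{\sscr{(t)}}(|\dot{\Phi_i(\gamma)}|)-o(1),
\end{align*}
where the $o(1)$ absorbs the small arc-length perturbation (\ref{pr}) via uniform continuity of the distortion coefficients on the relevant compact set in $C([0,L])\times[0,L]$. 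A Portmanteau/Fatou argument against $\tilde\Pi^i\rightharpoonup\Pi$, combined with weak convergence of the densities $\varrho_j^i\to\varrho_j$, delivers (\ref{curvaturedimension}) on $X$ with parameters $(\VK-\eta,N')$ for every $N'\ge N$; continuity of $\kappa\mapsto\sigma_\kappa^{\sscr{(t)}}(\theta)$ (Proposition \ref{continuity}) and monotone convergence then let $\eta\downarrow 0$ to produce (\ref{curvaturedimension}) for $(\VK,N')$. The principal obstacle is precisely this transfer of the distortion coefficient under the non-isometric map $\Phi_i$, since $\Phi_i(\gamma)$ is only a $2\epsilon_i$-geodesic with perturbed arc length; this is handled quantitatively by the bounds in Remark \ref{lottvillani} together with the three propositions cited above.
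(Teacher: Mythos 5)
Your high-level strategy matches the paper's — lift $\nu_0,\nu_1$ to $X_i$, apply $CD(\VK_i,N_i)$ there, transport the resulting plan back to $X$, and take a compactness limit — and your handling of the distortion coefficients via Propositions \ref{monotonicity}, \ref{continuity} and \ref{contin} is along the right lines. However, there is a genuine gap at the place you explicitly flag as ``delivered by weak convergence of the densities $\varrho_j^i\to\varrho_j$.'' The terms $\varrho_j^i(\gamma_j)^{-1/N'}$ on the right-hand side of (\ref{curvaturedimension}) are densities of measures on $X_i$ evaluated at endpoints of curves in $X_i$, and after pushing forward by $\Phi_i$ they must be replaced by $\varrho_j(\sigma_j)^{-1/N'}$ for the genuine densities on $X$ at endpoints of curves in $X$. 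Weak convergence of densities in an ambient space gives no control over this nonlinear composition, and your pushforward $\tilde\Pi^i=(\Phi_i)_\star\Pi^i$ has marginals $(f_i)_\star\nu_j^i$, not $\nu_j$, so even the endpoint law is only approximately right. The paper's proof does not sidestep this: it builds a Markov kernel $\mathcal{Q}$ using the map $\Xi_i$ that appends geodesic caps at both ends of $\Phi_i(\gamma)$, so the resulting plan $\hat{\Pi}_i$ has \emph{exactly} $\mu_0$ and $\mu_1$ as its marginals, and it applies Jensen's inequality to the conditional expectation $\varrho_{j,i}(y_j)=\int\varrho_j(x_j)Q_i'(y_j,dx_j)$, namely $\varrho_{j,i}^{1-1/N'}\geq\int\varrho_j^{1-1/N'}Q_i'(y_j,\cdot)$, to pass from densities on $X_i$ to densities on $X$ with an error of the favorable sign. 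This kernel-and-Jensen mechanism (steps \textbf{1}$\frac{1}{2}$–\textbf{4} of the paper, including the far/near decomposition $\hat{X}^\epsilon$ and the bound $(I)\geq-2M\epsilon$) is the real technical heart of the argument, and it is absent from your proposal.

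A secondary issue: you assert ``since $\epsilon_i\to 0$, every curve in $\supp\Pi$ is length-minimizing,'' but a weak limit of measures supported on $\mathcal{G}^{2\epsilon_i}(X)$ need not be concentrated on $\mathcal{G}(X)$ without a quantitative estimate on how much mass of $\hat{\Pi}_i$ sits close to genuine geodesics; the paper handles this via the decomposition $\hat{\Pi}_i=\tilde{\Pi}_i+\overline{\Pi}_i$ and a diagonal argument in step \textbf{5}. Your observation that $\liminf\VK_i\geq\VK$ and uniform continuity of the distortion coefficients handle the curvature bound with an $\eta$-loss is correct and is exactly what the paper does in step \textbf{1}, and the final removal of the cut-off $C$, the continuity assumption on $\VK$, and the bounded-density assumption by monotone convergence matches step \textbf{6}. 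So the surrounding scaffolding is sound, but the proof as written is incomplete without the construction of $\hat{\Pi}_i$ and the Jensen comparison of densities.
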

\begin{lemma}\label{therethere}
Let $\VK:X\rightarrow \mathbb{R}$ be admissible and $N>1$. For dynamical couplings 
$(\Pi_n)_{n\in\mathbb{N}}$ supported on $\mathcal{G}^{\eta}(X)$ for some $\eta>0$ with the same marginals $\mu_0, \mu_1\in \mathcal{P}(X,\m_{\sX})$ which converge to a dynamical coupling $\Pi_{\infty}$, it follows
\begin{align*}
\limsup_{n\rightarrow\infty}T^{\sscr{(t)}}_{\VK,N}(\Pi_n|\m_{\sX})\leq T^{\sscr{(t)}}_{\VK,N}(\Pi_{\infty}|\m_{\sX}).
\end{align*}
\end{lemma}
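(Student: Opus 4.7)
The plan is to reformulate the assertion as
$$\liminf_{n\to\infty}\int F\,d\Pi_n\ \ge\ \int F\,d\Pi_\infty,\qquad F(\gamma):=\tau^{\sscr{(1-t)}}_{\VK^-_\gamma,N}(|\dot\gamma|)\varrho_0(e_0\gamma)^{-1/N}+\tau^{\sscr{(t)}}_{\VK^+_\gamma,N}(|\dot\gamma|)\varrho_1(e_1\gamma)^{-1/N},$$
(noting $T^{\sscr{(t)}}_{\VK,N}(\Pi|\m_\sX)=-\int F\,d\Pi$ and $F\ge 0$), and to deduce it by combining lower semi-continuity of the $\tau$-factors with an approximation of the Borel density factors by continuous functions so that Portmanteau's theorem applies to each approximant.

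I would first establish lower semi-continuity of the $\tau$-factors on $\mathcal{G}^\eta(X)$. For $N>1$ one has $\tau^{\sscr{(t)}}_{\VK_\gamma,N}(|\dot\gamma|)=t^{1/N}\bigl[\sigma^{\sscr{(t)}}_{\VK_\gamma/(N-1)}(|\dot\gamma|)\bigr]^{(N-1)/N}$, and Proposition \ref{contin} applied to the lower semi-continuous function $\VK/(N-1)$ gives that $\gamma\mapsto \sigma^{\sscr{(t)}}_{\VK^{\pm}_\gamma/(N-1)}(|\dot\gamma|)$ is lower semi-continuous on $(\mathcal{LC}(X),\de_\infty)$; composing with the continuous non-decreasing map $x\mapsto x^{(N-1)/N}$ on $[0,\infty]$ preserves this, so both $\tau$-factors are non-negative lower semi-continuous on $\mathcal{G}^\eta(X)\subset \mathcal{LC}(X)$. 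Under the diameter and uniform lower curvature bounds of the stability setting (in which this lemma will be used), Sturm's comparison further yields a uniform upper bound $C<\infty$ for these $\tau$-factors on $\mathcal{G}^\eta(X)$. For the densities, since $\varrho_i^{-1/N}\in L^1(\mu_i)$ (its integral equals $-S_N(\mu_i|\m_\sX)$, finite by hypothesis), density of continuous functions in $L^1(\mu_i)$ on the Polish space $X$ provides non-negative continuous $g^{(i)}_k:X\to[0,\infty)$ with $g^{(i)}_k\to \varrho_i^{-1/N}$ in $L^1(\mu_i)$ as $k\to\infty$, and
$$F_k(\gamma):=\tau^{\sscr{(1-t)}}_{\VK^-_\gamma,N}(|\dot\gamma|)\,g^{(0)}_k(e_0\gamma)+\tau^{\sscr{(t)}}_{\VK^+_\gamma,N}(|\dot\gamma|)\,g^{(1)}_k(e_1\gamma)$$
is non-negative lower semi-continuous on $\mathcal{G}^\eta(X)$ (product of non-negative lsc with non-negative continuous composed through the continuous evaluation maps), so Portmanteau's theorem for weakly convergent probability measures yields $\liminf_n\int F_k\,d\Pi_n\ge \int F_k\,d\Pi_\infty$.

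To close, I would use the uniform error estimate
$$\Bigl|\int F\,d\Pi_n-\int F_k\,d\Pi_n\Bigr|\le C\sum_{i=0,1}\int\bigl|\varrho_i^{-1/N}-g^{(i)}_k\bigr|\,d\mu_i\ \xrightarrow{k\to\infty}\ 0,$$
which holds uniformly in $n$ (including $n=\infty$) because $(e_i)_*\Pi_n=\mu_i$ and the $\tau$-factors are bounded by $C$. Passing first to the $\liminf$ in $n$ and then to the limit in $k$ produces $\liminf_n\int F\,d\Pi_n\ge \int F\,d\Pi_\infty$, whence by negation $\limsup_n T^{\sscr{(t)}}_{\VK,N}(\Pi_n|\m_\sX)\le T^{\sscr{(t)}}_{\VK,N}(\Pi_\infty|\m_\sX)$. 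The main obstacle is the uniform bound $C$ on the $\tau$-factors: when the diameter and lower curvature bounds are present it is immediate via Sturm's comparison, but in general (where $\sigma$ may develop infinite values on a set of positive $\Pi_\infty$-measure) the linear $L^1$-control above breaks down and one must handle the case $\int F\,d\Pi_\infty=+\infty$ by a separate truncation and diagonal argument, using monotone convergence on the truncated integrands $F\wedge M$ as $M\uparrow\infty$.
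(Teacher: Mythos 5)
Your overall strategy is essentially the paper's: lower semi-continuity of the $\tau$-factors (Proposition~\ref{contin}), $L^1$-approximation of the densities by continuous functions, Portmanteau for the approximants, and a final passage to the limit. Working directly with the lsc version of Proposition~\ref{contin} rather than first treating continuous $\VK$ and then approximating is a legitimate shortcut. But two points need to be fixed.

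First, the lemma as stated carries no diameter or uniform curvature hypotheses, so the claimed uniform bound $\tau^{\sscr{(t)}}_{\VK^{\pm}_\gamma,N}(|\dot\gamma|)\le C$ on all of $\mathcal{G}^{\eta}(X)$ is not available; indeed the $\tau$-factors can take the value $+\infty$ on $\mathcal{G}^{\eta}(X)$. (In the application inside Theorem~\ref{stabilitytheorem} the quantities $a^{\pm}$ are defined as $\tau\wedge C$ precisely to sidestep this, but that truncation is not part of the present lemma.) You do flag this caveat, but the proposed remedy is the wrong truncation: $F\wedge M$ is bounded but \emph{not} lower semi-continuous, because the density factors $\varrho_i(e_i\gamma)^{-1/N}$ are merely Borel. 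Portmanteau therefore does not give $\liminf_n\int (F\wedge M)\,d\Pi_n\ge\int(F\wedge M)\,d\Pi_\infty$, and the diagonal argument collapses.

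The correct fix --- and it is what the paper does --- is to truncate the lsc factor, not $F$: replace $\tau^{\sscr{(t)}}_{\VK^{\pm}_\gamma,N}(|\dot\gamma|)$ by $\tau^{\sscr{(t)}}_{\VK^{\pm}_\gamma,N}(|\dot\gamma|)\wedge C$, and simultaneously truncate the density to $\varrho_i^{-1/N}\wedge C$. The product of bounded non-negative lsc with bounded non-negative continuous approximants $\psi$ is bounded lsc, so Portmanteau applies; the $L^1$-approximation error in the density is then controlled by the bound $C$ on the truncated $\tau$-factor, giving an estimate uniform in $n$; and one sends $C\uparrow\infty$ at the very end using monotone convergence in both truncated quantities (this is exactly the step $v^{C}_{0,n}(\varrho_0^{-1/N}\wedge C)\uparrow$ in the paper). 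Without keeping track of which object is being truncated and why each truncated quantity is lsc, the argument has a genuine hole even though all the right ingredients are on the table.
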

\begin{proof}First, we assume that $\VK$ is continuous.
We will show that 
\begin{align*}
\liminf_{n\rightarrow \infty}\int\tau_{\sk^+_{\gamma},\sN}^{\sscr{(t)}}(|\dot{\gamma}|)\varrho_0(\gamma_0)^{-\frac{1}{N}}\Pi_n(d\gamma)\geq \int\tau_{\sk^+_{\gamma},\sN}^{\sscr{(t)}}(|\dot{\gamma}|)\varrho_0(\gamma_0)^{-\frac{1}{N}}\Pi_{\infty}(d\gamma).
\end{align*}
Let $\Pi_{n,x_0}(d\gamma)$ be a disintegration of $\Pi_n$ with respect to $\mu_0$ for $n\in\mathbb{N}\cup\left\{\infty\right\}$, and let $C\in(0,\infty)$. We put
\begin{align*}
v_{0,n}^{C}(x_0):=\int\left[\tau_{\sk^+_{\gamma},\sN}^{\sscr{(t)}}(|\dot{\gamma}|)\wedge C\right] \Pi_{n,x_0}(d\gamma).
\end{align*}
where $n\in\mathbb{N}\cup \left\{\infty\right\}$. Since $C_b(X)$ is dense in $L^1(\m_{\sX})$, and since $v_{0,n}^C$ is bounded by definition, for each $\epsilon>0$ there is $\psi\in C_b(X)$ such that 
\begin{align}\label{alpha}
\int v_{0,n}^C|\varrho_0^{-\frac{1}{N}}\wedge C-\psi|d\mu_0<\epsilon \ \ \mbox{ for all } \ \ n\in\mathbb{N}\cup \left\{\infty\right\}
\end{align}
if $C<\infty$.
Weak convergence of $\Pi_n\rightarrow \Pi_{\infty}$ on $\mathcal{G}^{\eta}(X)$ implies that one can find $n_{\epsilon}$ such that for each $n\geq n_{\epsilon}$, one has
\begin{align}\label{beta}
  \int v_{0,\infty}^C\psi d\mu_0\leq \int v_{0,n}^C\psi d\mu_0+\epsilon
\end{align}
Putting together (\ref{alpha}) and (\ref{beta}) one gets
\begin{align*}
\int v_{0,\infty}^C[\varrho_0^{-\frac{1}{N}}\wedge C]d\mu_0\leq \int v_{0,n}^C[\varrho_0^{-\frac{1}{N}}\wedge C]d\mu_0+3\epsilon\leq \int v_{0,n}^{\infty}\varrho_0^{-\frac{1}{N}}d\mu_0+3\epsilon.
\end{align*}
It follows that for each $C>0$
\begin{align}\label{gruen}
\int v_{0,\infty}^C\varrho_0^{-\frac{1}{N}}\wedge C d\mu_0\leq \liminf_{n\rightarrow \infty}\int v_{0,n}^{\infty}\varrho_0^{-\frac{1}{N}}d\mu_0.
\end{align}
Finally, let $C\rightarrow \infty$
\begin{align*}
\int \tau_{\sk^+_{\gamma},\sN}^{\sscr{(t)}}(|\dot{\gamma}|)\varrho_0(\gamma_0)^{-\frac{1}{N}}\Pi(d\gamma)\leq \liminf_{n\rightarrow \infty}\int v_{0,n}^{\infty}\varrho_0^{-\frac{1}{N}}d\mu_0.
\end{align*}
The same statement holds with $\varrho_0$ replaced by $\varrho_1$ and $\tau^{\sscr{(t)}}_{\sk^+_{\gamma},\sN}$ replaced by $\tau_{\sk^-_{\gamma},\sN}^{\sscr{(1-t)}}$.
\smallskip\\
Now, let $\VK$ be lower semi-continuous, and let $\VK_i$ be a sequence of continuous functions that converge pointwise monotone from below to $\VK$.
By monotonicity of the distortion coefficients we observe that $$\tau_{\VK_{i,\gamma}^+,\sN}^{(t)}(|\dot{\gamma}|)\uparrow \tau_{\VK_{\gamma}^+,\sN}^{(t)}(|\dot{\gamma}|)$$
for any $\gamma\in\mathcal{G}^{\epsilon}$. Therefore, 
\begin{align*}
v_{0,\infty,i}^C\uparrow v_{0,\infty}^C\mbox{ and }v_{0,n,i}^{\infty}\uparrow v_{0,n}^{\infty} \mbox{ if }i\rightarrow \infty.
\end{align*}
In particular, for $\epsilon>0$ we can choose $i_{\epsilon}\in\mathbb{N}$ such that for $i\geq i_{\epsilon}$
\begin{align*}
\int \left[v_{0,\infty,i}^C-v_{0,\infty}^C\right]\varrho_0^{-\frac{1}{N}}\wedge C d\mu_0<\epsilon
\end{align*}
Hence, together with (\ref{gruen}) it follows that 
\begin{align*}
\int v_{0,\infty}^C\varrho_0^{-\frac{1}{N}}\wedge C d\mu_0-\epsilon\leq \liminf_{n\rightarrow \infty}\int v_{0,n}^{\infty}\varrho_0^{-\frac{1}{N}}d\mu_0
\end{align*}
and finally we let $C\rightarrow \infty$ and $\epsilon\rightarrow 0$, and the result follows as before.
\end{proof}

\begin{proofoftheorem}
\textbf{1.} 
First, let us assume that $\VK$ is continuous. Gromov-Hausdorff convergence and $\diam_{\sX_i}\leq L$ 
yields that $(X,\de_{\sX})$ is a compact geodesic space that satisfies volume doubling and $\diam_{\sX}\leq L$. Let $(Z,\de_{\sZ})$ be 
the metric space that was introduced in Remark \ref{ao}.
\medskip
\\
Since $\liminf \VK_i\geq \VK$, for each $\eta>0$ there exist $i_{\eta}$ such that $\VK_i(x)\geq \VK(f_i(x))-\eta/2$ for any $i\geq i_{\eta}$.
Since $\VK$ is continuous, there is $\hat{\epsilon}>0$ such that $\VK(x)\geq \VK(y)-\eta/2$ if $\de_{\sX}(x,y)\leq 4\epsilon$ and $\epsilon\in (0,\hat{\epsilon}]$.
We can choose $i_{\epsilon}\geq i_{\eta}$ as in Remark \ref{lottvillani} such that $3\epsilon_i<\epsilon$ and
$
\Phi_i(\mathcal{G}(X_i))\subset \mathcal{G}^{2\epsilon}(X)
$
for $i\geq i_{\epsilon}$. It follows that $\VK_i\circ\gamma (t)\geq \VK\circ \Phi_i(\gamma)(t)-\eta$ for $i\geq i_{\epsilon}$ 
since $\gamma$ and $\Phi_i(\gamma)$ are $2\epsilon$-close.
\medskip
\\
We can assume that $\m_{\sX_i}$ are probability measures. Since $(f_i)_{\star}\m_{\sX_i}\rightarrow \m_{\sX}$ weakly, 
the $L^2$-Wasserstein distance in $X\subset Z$ goes to zero. Hence, there exists $i_0\geq i_{\epsilon}$ 
such that $\de_{W}((f_i)_{\star}\m_{\sX_i},\m_{\sX})^2<\epsilon^3/2^4$ for $i\geq i_{0}$.
In the following we consider $\eta$ and choose a $\epsilon$ and $i\geq i_{0}$ as before. 
%
\medskip\\
{\textbf 1$\frac{1}{2}$.}
Let $\hat{q}_i$ be an optimal coupling between $(f_i)_{\star}\m_{\sX_i}$ and $\m_{\sX}$ and let $\bar{q}_i$ be the coupling between $\m_{\sX_i}$ and $(f_i)_{\star}\m_{\sX_i}$ that was introduced in Remark \ref{ao}.
By gluing $\hat{q}_i$ and $\bar{q}_i$ one obtains a coupling ${q}_i$ whose total cost is less than $\epsilon_i+\epsilon^3/2\leq \epsilon$ if $i>i_{\epsilon}$. 
It provides an upper bound for the $L^2$-Wasserstein distance between $\m_{\sX_i}$ and $\m_{\sX}$ in $Z$. 
Following \cite{stugeo1} one can define a map ${Q}_i':\mathcal{P}_2(\m_{\sX})\rightarrow \mathcal{P}_2(\m_{\sX_i})$ with
\begin{align}\label{someestimate}
S_N(Q_i'(\mu)|\m_{\sX_i})\leq S_N(\mu|\m_{\sX})\ \ \& \ \ \de_{W}^2(\mu,Q_i'(\mu))<\delta(i)
\end{align}
where $\de_{W}$ denotes the Wassertstein distance in $(Z,\de_{\sZ})$ and $\delta(i)\rightarrow 0$ for $i\rightarrow \infty$.
In \cite{stugeo1} $Q_i'$ is constructed explicitely by disintegration of an optimal coupling with respect to $\m_{\sX_i}$. 
But one can see that for the estimates (\ref{someestimate}) the coupling ${q}$ is already sufficient.
More precisely, we set 
$
\mu_{j,i}=Q'_i(\mu_j)=\varrho_{j,i}d\m_{\sX_i}
$
where 
\begin{align*}
\varrho_{j,i}(y)=\int_X\varrho_{j}(x)Q'_i(y,dx)=\int_X{\int_X\varrho_{j}(z)\hat{Q}_i'(z,dx)}\bar{Q}'_i(y,dz)
\end{align*}
and $j=0,1$. $\hat{Q}_i'$ and $\bar{Q}_i'$ are disintegrations of $\hat{q}$ and $\bar{q}$ with respect to $(f_i)_{\star}\m_{\sX_i}$ and $\m_{\sX_i}$ respectively. 
In particular, $\bar{Q}_i'(y,dz)=\delta_{f_i(x)}(dz)$.
Similar, we can define $Q_i:\mathcal{P}_2(\m_{\sX_{i}})\rightarrow \mathcal{P}_2(\m_{\sX})$ by
$
\mu^{i}=Q^i(\mu)=\varrho^{i}d\m_{\sX_i}
$
where 
\begin{align*}
\varrho^{i}(x)=\int_X\varrho(y)Q^i(x,dy)=\int_X{\int_X\varrho(y)\bar{Q}^i(z,dy)}\hat{Q}^i(x,dz)
\end{align*}
where $\hat{Q}^i$ and $\bar{Q}^i$ are disintegrations of $\hat{q}_i$ and $\bar{q}_i$ with respect to $f_{\star}\m_{\sX_i}$ and $\m_{\sX}$ respectively.
Again we have
\begin{align}\label{corres}
S_N(Q^i(\mu)|\m_{\sX})\leq S_N(\mu|\m_{\sX_i})\ \ \& \ \ \de_{W}^2(\mu,Q^i(\mu))<\delta(i)
\end{align}
In the next step we will transport probability measure from $X$ to $X_i$ via $Q$, and we want to emphasize that this transport consists of two parts, corresponding to $\hat{q}$ and $\bar{q}$ respectively.
\smallskip\\
\textbf{2.} 
Pick measures $\mu_0=\varrho_0d\m_{\sX}$ and $\mu_1=\varrho_1d\m_{\sX}$ in $\mathcal{P}_2(X_i,\m_{\sX_i})$ with bounded densities.  
Due to the curvature-dimension condition on $X_i$, there exists a geodesic $\mu_{t,i}$ and a dynamical optimal transport plan $\Pi_i$ such that 
\begin{align*}
S_{N'}(\mu_{t,i}|\m_{\sX_i})\leq T_{\sk_i,\sN'}^{(t)}(\Pi_i|\m_{\sX_i}).
\end{align*}
By (\ref{corres}) we know that $S_{N'}$ decreases and $Q^i(\mu_{t,i})$ is a $\delta(i)$-geodesic in $\mathcal{P}_2(X,\m_{\sX})$ where $\delta(i)\rightarrow 0$ if $i\rightarrow \infty$.
By compactness of space $Q^i(\mu_{t,i})$ converges to a geodesic $\mu_t$ between $\mu_0$ and $\mu_1$.
In the following we always write $N$ instead of $N'$. 
\medskip\\
We will generalize the map that was introduced in Remark \ref{lottvillani}. 
We pick a geodesic $\gamma\in X_i$ and we consider the map $\Phi_i$ and the $2\epsilon$-geodesic $\Phi_i(\gamma)$. 
Since $X$ is a compact geodesic space, one can choose a measurable map $\Psi:X^2\rightarrow \mathcal{G}(X)$ such that $\Psi(x,y)$ 
is a geodesic between $x$ and $y$. For instance, this follows from a measurable selection theorem. Now we define a Markov kernel 
$
\mathcal{Q}$ on $\mathcal{G}(X_i)\times\mathcal{P}(\mathcal{G}^{4\epsilon+\diam_{\sX}}(X))$ as follows. 
Consider the map $\Xi_i:\mathcal{G}(X_i)\times X^2\rightarrow \mathcal{G}^{4\epsilon+\diam_{\sX}}(X)$ that is defined as
$$(\gamma,x_0,x_1)\mapsto \Psi(x_0,\Phi_i(\gamma)(0))*\Phi(\gamma)* \Psi(\Phi_i(\gamma)(1),x_1).$$ 
and consider ${Q}_i'(\cdot,dx)$, $\varrho_j$ and $\varrho_{j,i}$. 
Here, the operator $*$ is like in Remark \ref{lottvillani}. 
It is clear from the construction that $\Xi_i$ maps to $\mathcal{G}^{4\epsilon+\diam_{\sX}}(X)$ and $\Xi_i$ is measurable. 
We also set $\Xi_{i,\gamma}(\cdot):=\Xi_i(\gamma,\cdot)$.
\smallskip\\
Then we define 
$\mathcal{Q}(\gamma,d\sigma)=(\Xi_{i,\gamma})_{\star}P_{\gamma_0,\gamma_1}(d(x_0,x_1))$ where $${P_{\gamma_0,\gamma_1}(d(x_0,x_1)):=\left[\frac{\varrho_j(x)}{\varrho_{j,i}(\gamma(0))}{Q}_i(\gamma(0),dx_0)
\otimes \frac{\varrho_{j}(x)}{\varrho_{j,i}(\gamma(1))}{Q}_i(\gamma(1),dx_1)\right]}.$$
$\mathcal{Q}$ is a Markov kernel.
We define a dynamical transference plan $\hat{\Pi}_i$ on $\mathcal{G}^{4\epsilon+\diam_{\sX}}(X)$ via
$$
\int_{\mathcal{G}(X_i)}\mathcal{Q}(\gamma,d\sigma)\Pi_i(d\gamma)=\hat{\Pi}_i(d\sigma). \ \ \mbox{Set $(e_0,e_1)_{\star}\hat{\Pi}_i=\hat{\pi}_i$.}
$$
If $f:X^2\rightarrow \mathbb{R}$ is continuous and bounded on $X^2$, then one can compute that
\begin{align*}
&\int_{X^2}f(x_0,x_1)\hat{\pi}_i(dx_0,dx_1)=\int\int f(e_0(\sigma),e_1(\sigma))\mathcal{Q}(\gamma,d\sigma)\Pi_i(d\gamma)\\
%
%
&=\int_{X_i^2}\int_{X^2} f(x_0,x_1)\frac{\varrho_0(x_0)\varrho_1(x_1)}{\varrho_{1,i}(y_1)}Q_i'(y_1,dx_1)\pi_i(dy_0,dy_1)Q^i(x_0,dy_0)\m_{\sX}(dx_0)
\end{align*}
Since the equality holds for any $f$, we obtain an explicite formula for $\hat{\pi}_i$.
If one chooses $f(x_0,x_1)=f_0(x_0)$ or $f(x_0,x_1)=f_1(x_1)$, one can see that that the first and the final marginal of $\hat{\Pi}_i$ are $\mu_0$ and $\mu_1$ respectively.
Let $\hat{\Pi}_{i,x_0,x_1}(d\sigma)$ be a disintegration of $\hat{\Pi}_i$ with respect to $\hat{\pi}_i$.
Let $C>0$ be a constant. For $\gamma\in\mathcal{G}(X_i)$ we define
\begin{align*}
\tau_{\sk_{i,\gamma}^{-/+},\sN}^{\sscr{(1-t)/(t)}}(|\dot{\gamma}|)=b^{-/+}(\gamma)\in [0,\infty]
\end{align*}
and for $\sigma\in\mathcal{G}^{4\epsilon+\diam_{\sX}}(X)$ we define
\begin{align*}
\sigma\in \mathcal{G}^{4\epsilon+\diam_{\sX}}\mapsto a^{-/+}(\sigma)
:=\tau_{\sk_{\sigma}^{-/+}-\eta,\sN}^{\sscr{(1-t)/(t)}}(|\dot{\sigma}|)\wedge C.
\end{align*} 
$\sigma\in \mathcal{G}^{4\epsilon+\diam_{\sX}}\mapsto a^{-/+}(\sigma)$ is continuous function with repsect to $\de_{\infty}$. 
The dependence of $a^{-/+}$ and $b^{-/+}$ on $\VK$, $\eta$, $N$ and $C$ is suppressed in our notation but in step \textbf{6.} we wil also write $a^{-/+}_{\VK}$ if necessary.
\medskip\\
\textbf{3.}
Let $e_t:\mathcal{G}(X_i)\rightarrow X_i$ be the evaluation map. 
We consider $(e_0,e_1)_{\star}\Pi_i=\pi_i$ that is an optimal plan, and $(e_0,e_1):\Gamma_i\rightarrow \supp\pi_i\subset X\times X$. Let $\Pi_{i,y_0,y_1}(d\gamma)$ be the disintegration of $\Pi_i$
with respect to 
$\pi_i$, and let $\pi_{j,i}(y',dy)$ be a disintegration of $\pi_i$ with respect to $\mu_{j,i}$ for $j=0,1$.
We put
\begin{align*}
v_0(y_0):=\int_{X_i}
{\int_{\mathcal{G}(X_i)}\tau_{\VK_{i,\gamma}^-,N'}^{\sscr{(1-t)}}(|\dot{\gamma}|)\Pi'_{i,y_0,y_1}(d\gamma)}
\pi_{j,i}(y_0,dy_1)
\end{align*}
and similar we define
$
v_1(y_1)
$ 
replacing $\tau_{\sk_{i,\gamma}^-,\sN'}^{\sscr{(1-t)}}(|\dot{\gamma}|)$ by $\tau_{\sk_{i,\gamma}^+,\sN'}^{\sscr{(t)}}(|\dot{\gamma}|)$.
\smallskip\\
\begin{align*}
T^{\sscr{(t)}}_{\VK,N'}(\Pi_i|\m_{\sX_{i}})
&=\sum_{j=0,1}\int_{X_i}\left[\int_X{\varrho}_{j}(x_j){Q}'_i(y_j,dx_j)\right]^{1-\frac{1}{N}}v_j(y_j)\m_{\sX_i}(dy_j)\\
&\geq\sum_{j=0,1}\int_{X_i}\int_X{\varrho}_{j}(x_j)^{1-\frac{1}{N}}{Q}'_i(y_j,dx_j)v_j(y_j)\m_{\sX_i}(dy_j)\\
&=\sum_{j=0,1}\underbrace{\int_{X_i}\int_X{\varrho}_{j}(x_j)^{-\frac{1}{N}}\frac{\varrho_{j}(x_j)}{\varrho_{j,i}(y_j)}{Q}'_i(y_j,dx_j)v_j(y_j)\mu_{i}(dy_j)}_{=:(\dagger)_j}
\end{align*}
One has the following identity.
\begin{align*}
(\dagger)_{\scriptscriptstyle{0}}
&=\int_{X_i^2}\int_{X^2}\int\varrho_0(x_0)^{-\frac{1}{N}}{\frac{\varrho_0(x_0)\varrho_1(x_1)}{\varrho_{0,i}(y_0)\varrho_{1,i}(y_1)}{Q}_i'(y_1,dx_1)Q'_i(y_0,dx_0)}b^-(\gamma)\Pi_{i,y_0,y_1}(d\gamma)\pi_i(dy_0,dy_1)\\
&=\int_{X_i^2}\int\underbrace{\int_{X^2}\varrho_0(x_0)^{-\frac{1}{N}}P_{y_0,y_1}(d(x_0,x_1))}_{=:h(y_0,y_1)}b^-(\gamma)\Pi_{i,y_0,y_1}(d\gamma)\pi_i(d(y_0,y_1))\\
&=\int_{X_i^2}\int h(e_0(\gamma),e_1(\gamma))b^-(\gamma)\Pi_{i,y_0,y_1}(d\gamma)\pi_i(d(y_0,y_1))=(\#)
\end{align*}
In the last equality we used that $(e_0,e_1)(\gamma)$ is constant and equal to $(y_0,y_1)$ on the support of $\Pi_{i,y_0,y_1}(d\gamma)$.
\begin{align*}
(\#)&=\int_{X_i}\int\int_{X^2}\left[a^-\left((\Xi_{i,\gamma}(x_0,x_1)\right)+\left(b^-(\gamma)-a^-(\Xi_{i,\gamma}(x_0,x_1))\right)\right]\\
&\hspace{3cm}\times P_{\gamma_0,\gamma_1}(d(x_0,x_1))\Pi_{i,y_0,y_1}(d\gamma)\pi_i(d(y_0,y_1))\\
&=\begin{rcases}{\displaystyle \int\int_{X^2}\varrho_0(e_0(\Xi_{i,\gamma}(x_0,x_1))^{-\frac{1}{N}}a^-(\Xi_{i,\gamma}(x_0,x_1))P_{\gamma_0,\gamma_1}(d(x_0,x_1))\Pi_{i}(d\gamma)}\end{rcases} =(**)_{\scriptscriptstyle{0}}\\
&\hspace{39pt}\begin{rcases}+{\displaystyle\int_{X_i^2}\int\int_{X^2}\varrho_0(x_0)^{-\frac{1}{N}}\left(b^-(\gamma)-a^-(\Xi_{i,\gamma}(x_0,x_1))\right)}\\
\hspace{3cm}\times P_{\gamma_0,\gamma_1}(d(x_0,x_1))\Pi_{i,y_0,y_1}(d\gamma)\pi_i(d(y_0,y_1))\end{rcases}=(*)_{\scriptscriptstyle{0}}
\end{align*}
and similar for $(\dagger)_{\sscr{1}}$. 
%
%
%
%
%
%
\smallskip\\
\textbf{4.}
Consider $m=\inf\left\{\eta>0: \hat{q}_i(\left\{\de_{\sX}>\eta\right\})<\eta\right\}$ and a positive $\eta>m$ such that
$\eta<m+{\epsilon/2}$. By Markov's inequality and since $i\geq i_0$ (for instance see the proof Proposition 2.6 (i) in \cite{sos}) one has 
$$m\leq \left(\int\de_{\sX}^2(x,y)d\hat{q}_i(x,y)\right)^{\frac{1}{3}}<(2(\epsilon^3/2^4))^{\frac{1}{3}}=\epsilon/2.$$
Therefore, it follows $\eta<\epsilon$ and 
$
\hat{q}_i\left(\left\{\de_{\sX}>\epsilon\right\}\right)\leq \hat{q}_i\left(\left\{\de_{\sX}>\eta\right\}\right)\leq \eta <\epsilon.
$
Define $\left\{\de_{\sX}\leq\epsilon\right\}=:\hat{X}^{\epsilon}\subset X\times X$.
\smallskip\\
Let us consider $(*)_{\sscr{0}}$.
\begin{align*}
(*)_{\sscr{0}}
&=\int_{X_i^2}\int\int_{X^4}(b^-(\gamma)-a^-(\Xi_{i,\gamma}(x_0,x_1))) \varrho_0(x_0)^{-\frac{1}{N}}\frac{\varrho_0(x_0){\varrho}_{1}(x_1)}{\varrho_{0,i}(\gamma_0)\varrho_{1,i}(\gamma_1)}\\
&\hspace{10pt}\times\bar{Q}_i'(\gamma_1,dx_1)\hat{Q}_i'(x_1,dz_1)\bar{Q}_i'(\gamma_0,dx_0)\hat{Q}(x_0,dz_0)\Pi_{i,y_0,y_1}(d\gamma)\pi_i(d(y_0,y_1))\\
&=\int_{X_i^2}\int\int_{X^2}(b^-(\gamma)-a^-(\Xi_{i,\gamma}(x_0,x_1))) \varrho_0(x_0)^{1-\frac{1}{N}}\\
&\hspace{10pt}\times\bar{Q}_i'(\gamma_0,dx_0)\hat{Q}(x_0,dz_0)\Pi_{i,y_0,y_1}(d\gamma)\pi_i(y_0,dy_1))\m_{\sX_i}(dy_0)\\
&={\textstyle \underbrace{\int_{X_i^2}\int\int_{\hat{X}^{\epsilon}}(b^-(\gamma)-a^-(\Xi_{i,\gamma}(x_0,x_1)))\dots}_{=:(II)} +\underbrace{\int_{X_i^2}\int\int_{(\hat{X}^{\epsilon})^c}(b^-(\gamma)-a^-(\Xi_{i,\gamma}(x_0,x_1)))\dots}_{=:(I)}}
\end{align*}
Since $a^-$ and $\varrho_0$ are bounded, there exists a constant $M:=M(C)>0$ such that
\begin{align*}
(I)&\geq -\int_{X_i^2}\int\int_{(\hat{X}^{\epsilon})^c}M\bar{Q}_i'(y_0,dx_0)\hat{Q}(x_0,dz_0)\Pi_{i,y_0,y_1}(d\gamma)\pi_i(y_0,dy_1))\m_{\sX_i}(dy_0)\\
&= -\int_{X_i^2}\int_{(\hat{X}^{\epsilon})^c}M\bar{Q}_i'(y_0,dx_0)\hat{Q}(x_0,dz_0)\pi_i(y_0,dy_1)\m_{\sX_i}(dy_0)\\
&= -\int_{X_i}\int_{(\hat{X}^{\epsilon})^c}M\hat{Q}(x_0,dz_0)\bar{Q}_i'(y_0,dx_0)\m_{\sX_i}(dy_0)\\
&= -\int_{X_i}\int_{(\hat{X}^{\epsilon})^c}M\hat{Q}(x_0,dz_0)\bar{Q}^i(x_0,dy_0)\m_{\sX}(dx_0)
=-2M\hat{q}_i((\hat{X}^{\epsilon})^c)\geq -2M\epsilon
\end{align*}
Consider $(II)$. Define measures on $X^2$ as follows
\begin{align*}
&\hat{P}_{y_0,y_1}(A\times B)=\\
&\int_{(X^{\epsilon})^4}1_{A\times B}(z_0,z_1)\frac{\varrho_0(x_0){\varrho}_{1}(x_1)}{\varrho_{0,i}(y_0)\varrho_{1,i}(y_1)}\bar{Q}_i'(y_1,dx_1)\hat{Q}_i'(x_1,dz_1)\bar{Q}_i'(y_0,dx_0)\hat{Q}_i'(x_0,dz_0)
\end{align*}
Then
\begin{align*}
(II)=\int\int \int(b^-(\gamma)-a^-(\sigma))\varrho_0(x_0)^{-\frac{1}{N}}(\Xi_{\gamma,i})_{\star}\hat{P}_{\gamma_0,\gamma_1}(d\sigma)\Pi_{i,y_0,y_1}(d\gamma)\pi_i(d(y_0,y_1)).
\end{align*}
By construction of $\Xi_{i,\gamma}$ we have that $\Xi_{i,\gamma}$ maps the support of $\hat{P}_{\gamma_0,\gamma_1}$ to $\mathcal{G}^{4\epsilon}(X)$ 
and $\Xi_{i,\gamma}\left(\supp\hat{P}_{\gamma_0,\gamma_1}\right)$ is $4\epsilon$-close to $\gamma$ in $Z$.
Therefore $b^-(\gamma)-a^-(\cdot)\geq 0$ on the support of $(\Xi_{i,\gamma})_{\star}\hat{P}_{\gamma_0,\gamma_1}(d\sigma)$.
Hence, $(II)\geq 0$.
We obtain
\begin{align}\label{badweather}
T^{\sscr{(t)}}_{\VK,N'}(\Pi_i|\m_{\sX_{i}})\geq&
\int \left[a^-(\sigma)\varrho_0(\sigma_0)^{-\frac{1}{N}}+ a^+(\sigma)\varrho_1(\sigma_1)^{-\frac{1}{N}}\right]\hat{\Pi}_i(d\sigma)-4M\epsilon
\end{align}
\smallskip\\
\textbf{5.}
Since $\mathcal{G}^{4\epsilon+\diam_{\sX}}(X)$ is compact with respect to $\de_{\infty}$, Prohorov's theorem yields that 
there is a subsequence of $\hat{\Pi}_i$ that converges to a dynamical transference plan $\Pi$ that is supported on $\mathcal{G}^{4\epsilon+\diam_{\sX}}(X)$. By 
a straightforward modification of
Lemma \ref{therethere} (replacing $\tau_{\VK^{-/+},N}$ by $a^{-/+}$) it follows that 
\begin{align*}
\mbox{RHS in (\ref{badweather})}\rightarrow \int \left[a^{-}({\sigma})\varrho_0(\sigma_0)^{-\frac{1}{N}}+ a^{+}({\sigma})\varrho_1(\sigma_1)^{-\frac{1}{N}}\right]{\Pi}(d\sigma) - 4M\epsilon.
\end{align*}
We show that $(e_0,e_1)_{\star}\Pi=:\pi$ is optimal and $\Pi$ is actually supported on $\mathcal{G}(X)$. 
The first claim follows by construction of $\hat{\Pi}_i$. We have an explicite 
representation for the coupling $\hat{\pi}_i$ that is the same coupling as contructed by Sturm in \cite{stugeo2} (more precisely, this is $\bar{q}^r$ on page 154). 
It is an almost optimal coupling between $\mu_0$ and $\mu_1$ and the error becomes small if $i$ is large. 
Therefore, since $\hat{\pi}_i\rightarrow \pi$ weakly and since the the Wasserstein distance is l.s.c. with respect to weak convergence, 
$\pi$ is optimal for $\mu_0$ and
$\mu_1$.
\smallskip\\
For the second claim we decompose $\hat{\Pi}_i$ ($i\geq i_0$) with resepect to $X^{\epsilon}$. This can be done similar as in the construction of $\hat{P}$ from above. 
Consider 
\begin{align*}
\int\left((\Xi_{i,\gamma})_{\star}\hat{P}_{\gamma_0,\gamma_1}\right)(d\sigma)\Pi_i(d\gamma)=\tilde{\Pi}_i(\sigma) \ \mbox{ and } \ \Pi_i-\tilde{\Pi}_i=\overline{\Pi}_i
\end{align*}
By construction $\tilde{\Pi}_i$ is supported on $\mathcal{G}^{4\epsilon}(X)$ that is compact. Therefore, we can consider another subsequence of $\Pi_i$ such that $\tilde{\Pi}_i$ converges to a
measure $\tilde{\Pi}$ supported on $\mathcal{G}^{4\epsilon}(X)$. We can conclude that also $\overline{\Pi}_i\rightarrow \Pi-\tilde{\Pi}$ weakly and $\overline{\Pi}_i(\mathcal{G}^{4\epsilon+\diam_{\sX}}(X))\leq 4M{\epsilon}$ for $i\geq i_0$. 
Thus $(\Pi-\tilde{\Pi})(\mathcal{G}^{4\epsilon+\diam_{\sX}}(X))\leq 4M{\epsilon}$. By a diagonal argument we obtain $(\Pi-\tilde{\Pi})(\mathcal{G}^{4\epsilon+\diam_{\sX}}(X))\leq \epsilon$ and $\supp\tilde{\Pi}\subset \mathcal{G}^{4\epsilon}(X)$ 
for any $\epsilon>0$. Hence $\Pi=\tilde{\Pi}$ and it is supported on $\mathcal{G}(X)$.
\medskip\\
Together with the convergence of $Q^i(\mu_{t,i})$ to $\mu_t$ (see the beginning of step \textbf{2.}), the curvature-dimension condition on $X_i$ and lower semi-continuity of $S_N$, we get
\begin{align}\label{rainy}
S_N(\mu_{t}|\m_{\sX})\leq -\int \left[a^{-}(\sigma)\varrho_0(\sigma_0)^{-\frac{1}{N}}+ a^{+}(\sigma)\varrho_1(\sigma_1)^{-\frac{1}{N}}\right]{\Pi}(d\sigma).
\end{align}
Since $\eta$ was arbitrary, application of another compactness argument yields the inequality for $\VK$ instead $\VK-\eta$. 
\smallskip\\
\textbf{6.}
In the last step we want to remove the remaining assumptions, namely continuity of $\VK$ and boundedness of $\varrho_j$ and $a^{-/+}$.
\smallskip\\
We consider general absolutely continuous probability measures $\mu_i=\varrho_id\m_{\sX}\in \mathcal{P}_2(X,\m_{\sX})$ and an arbitrary optimal coupling $\pi$ of them. We define
\begin{align*}
E_r:=\left\{(x_0,x_1)\in X^2:\varrho_i(x_i)\leq r \mbox{ for } i=0,1\right\}
\end{align*}
and for $i=0,1$ 
$$
\mu_i^r=(p_0)_{\star}\left({\pi(E_r)}^{-1}\pi|_{E_r}\right).
$$
Then $\mu_i^r$ has bounded density and we have $W_2(\mu_i,\mu_i^r)<\epsilon$ for $r>0$ sufficiently large. 
If $\VK$ is lower semi-continuous, we take monotone sequence of continuous functions $\VK_n$ that approximates $\VK$ from below. 
Since we can repeat all the previous steps,
for any pair $(r,n)$ we obtain an optimal dynamical coupling $\Pi_{(r,n)}$ and a Wasserstein geodesic $\mu_t^{(r,n)}$ such that (\ref{rainy}) holds with $\VK$ replaced by $\VK_n$.
The right hand side of (\ref{rainy}) is monotone with respect to $r$ and $\VK_n$. Therefore, we obtain 
\begin{align*}
&S_N(\mu_{t}^{r}|\m_{\sX})
\leq-\pi(E_{r})^{\frac{1}{N}}\int \left[a^{-}_{\VK_{n}}(\gamma)(\varrho_0(\gamma_0)\wedge r)^{-\frac{1}{N}}+ a^{+}_{\VK_{n}}(\gamma)(\varrho_1(\gamma_1)\wedge r)^{-\frac{1}{N}}\right]{\Pi}^{r}(d\gamma)
\\
&\leq-\pi(E_{{r}})^{\frac{1}{N}}\int \left[a^{-}_{\VK_{\hat{n}}}(\gamma)(\varrho_0(\gamma_0)\wedge \hat{r})^{-\frac{1}{N}}+ a^{+}_{\VK_{\hat{n}}}(\gamma)(\varrho_1(\gamma_1)(\varrho_1(\gamma_1)\wedge \hat{r})^{-\frac{1}{N}}\right]{\Pi}^{{r}}(d\gamma).
\end{align*}
for $(r,n)\geq (\hat{r},\hat{n})$. 
Compactnes yields converging subsequences $\Pi_{(r_i,n_i)}$ and $\mu_t^{(r_i,n_i)}$ for $i\rightarrow\infty$
and by the definition of weak convergence the limits of $\Pi$ and $\mu_t$
satisfy 
\begin{align*}
S_N(\mu_{t}|\m_{\sX})
\leq-\int \left[a^{-}_{\VK_{\hat{n}}}(\gamma)(\varrho_0(\gamma_0)\wedge \hat{r})^{-\frac{1}{N}}+ a^{+}_{\VK_{\hat{n}}}(\gamma)(\varrho_1(\gamma_1)\wedge \hat{r})^{-\frac{1}{N}}\right]{\Pi}(d\gamma).
\end{align*}
This follows since $a^{-/+}_{\VK_{\hat{n}}}$ is bounded and continuous and the densities $\varrho_i\wedge \hat{r}$
can be approximated by functions $\psi\in C_b(X)$ (compare with the proof of Lemma \ref{therethere}). 
We let $\hat{r},\hat{n}\rightarrow \infty$. Then the theorem of monotone convergence yields the estimate
\begin{align}\label{rainy3}
S_N(\mu_{t}|\m_{\sX})\leq -\int \left[a^{-}_{\VK}(\sigma)\varrho_0(\sigma_0)^{-\frac{1}{N}}+ a^{+}_{\VK}(\sigma)\varrho_1(\sigma_1)^{-\frac{1}{N}}\right]{\Pi}(d\sigma).
\end{align}
Finally, we let $C\nearrow \infty$. Then 
\begin{align*}
a^{-/+}(\gamma)\nearrow \tau_{\sk_{\gamma}^{-/+},\sN}^{\sscr{(t)}}(|\dot{\gamma}|)\in\mathbb{R}\cup\left\{\infty\right\} 
\end{align*}
for any $\gamma\in \mathcal{G}(X)$ and again by the monotone congergence theorem the left hand side in (\ref{rainy3}) converges to 
\begin{align}\label{rainy2}
S_N(\mu_{t}|\m_{\sX})\leq -\int \left[\tau_{\sk_{\gamma}^{-},\sN}^{\sscr{(1-t)}}(|\dot{\gamma}|)\varrho_0(\gamma_0)^{-\frac{1}{N}}+ \tau_{\sk_{\gamma}^{+},\sN}^{\sscr{(t)}}(|\dot{\gamma}|)\varrho_1(\gamma_1)^{-\frac{1}{N}}\right]{\Pi}(d\gamma).
\end{align}
This finishes the proof.
\qed
\end{proofoftheorem}
\begin{corollary}
Let $(M_i,g_{\sM_i})_{i\in\mathbb{N}}$ be a family of compact Riemannian manifolds such that $$\ric_{\sM_i}\geq \VK_i\ \& \ \dim_{\sM_i}\leq N$$ where $\VK_i:M_i\rightarrow \mathbb{R}$ is a family of equi-continuous functions such that
$\VK_i\geq -C$ for some $C>0$. 
There exists subsequence of $(M_i,\de_{\sM_i},\vol_{\sM_i})$ that converges in 
measured Gromov-Hausdorff sense to $(X,\de_{\sX},\m_{\sX})$, and there exists a subsequence of $\VK_i$ such that $\lim \VK_i= \VK$.
Then $X$ satisfies the condition $CD(\VK,N)$.
\end{corollary}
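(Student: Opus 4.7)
The plan is to combine Gromov's precompactness theorem with Arzel\`a--Ascoli (adapted to the varying domains $M_i$), the smooth identification in Theorem \ref{smoothcase}, and the stability theorem (Theorem \ref{stabilitytheorem}).

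First I would produce the limit space. By Theorem \ref{smoothcase}, each $(M_i,\de_{\sM_i},\vol_{\sM_i})$ satisfies $CD(\VK_i,N)$. Since $\VK_i\geq -C$ and $\dim M_i\leq N$, one has the Bishop--Gromov volume comparison uniformly, hence a uniform doubling constant on balls of bounded radius (via the analogue of Theorem \ref{useful}). Together with the hypothesis of compactness (and, as in Theorem \ref{stabilitytheorem}, an implicit uniform diameter bound) Gromov's precompactness theorem furnishes a subsequence $(M_i,\de_{\sM_i},\vol_{\sM_i})$ converging in measured Gromov--Hausdorff sense to some $(X,\de_{\sX},\m_{\sX})$, with associated $\epsilon_i$-isometries $f_i:M_i\to X$ and $\epsilon_i\to 0$.

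Next I would extract the limit curvature function. Think of each $\VK_i$ as a bounded equi-continuous function on $M_i\hookrightarrow Z_i$, where $Z_i$ is the common embedding space from Remark \ref{ao}. Equi-continuity means there is a common modulus of continuity $\omega$ with $\omega(0^+)=0$ independent of $i$. Define $\tilde\VK_i:X\to\mathbb{R}$ by pulling back along a measurable approximate inverse of $f_i$ (or equivalently by $\tilde\VK_i(x)=\VK_i(x_i)$ for any $x_i\in M_i$ with $\de_{\sZ}(x,x_i)\leq\epsilon_i$); any two choices differ by at most $\omega(2\epsilon_i)$, so $\tilde\VK_i$ is equi-continuous on the compact space $X$ up to an error $o(1)$, and uniformly bounded. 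Arzel\`a--Ascoli then provides a further subsequence such that $\tilde\VK_i\to \VK$ uniformly on $X$ for some continuous $\VK:X\to\mathbb{R}$ (in particular $\VK$ is admissible).

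The third step is to check the hypothesis $\liminf_{i\to\infty}\VK_i\geq \VK$ from Definition \ref{\VK}. Fix $\eta>0$; choose $\delta>0$ with $\omega(\delta)<\eta/3$, and pick $i$ large enough so that $\epsilon_i<\delta$ and $\|\tilde\VK_i-\VK\|_\infty<\eta/3$. For $x\in M_i$, $\de_{\sZ}(x,f_i(x))=\epsilon_i<\delta$, so $|\VK_i(x)-\tilde\VK_i(f_i(x))|\leq\omega(\delta)<\eta/3$, whence
\begin{equation*}
\VK_i(x)\geq \tilde\VK_i(f_i(x))-\tfrac{\eta}{3}\geq \VK(f_i(x))-\tfrac{2\eta}{3}\geq \VK(f_i(x))-\eta,
\end{equation*}
which is precisely the condition in Definition \ref{\VK}. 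Combined with $\dim M_i\leq N$ and the diameter bound, all hypotheses of Theorem \ref{stabilitytheorem} are met, and we conclude that $(X,\de_{\sX},\m_{\sX})$ satisfies $CD(\VK,N)$.

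The only real difficulty is the second step: the $\VK_i$ live on different spaces, so equi-continuity must be transported to the common limit $X$ using the $\epsilon_i$-isometries. The key observation is that the modulus of continuity is preserved up to an additive error $\omega(2\epsilon_i)\to 0$, which lets standard Arzel\`a--Ascoli produce a continuous limit on $X$; everything else is routine assembly.
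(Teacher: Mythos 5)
Your proposal is correct and follows essentially the same route as the paper: the paper invokes Gromov's precompactness theorem for the spaces, ``Gromov's Arzel\`a--Ascoli theorem'' (the varying-domain version you reconstruct from the $\epsilon_i$-isometries and the common modulus of continuity) for the curvature functions, and then Theorem \ref{stabilitytheorem}. You simply unpack the Arzel\`a--Ascoli step and the verification of $\liminf_{i}\VK_i\geq\VK$, which the paper leaves implicit, and you correctly flag that a uniform diameter bound is needed for Gromov compactness (a hypothesis the paper also uses tacitly).
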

\begin{proof}
Since there is uniform lower bound for the Ricci curvature, Gromov's compactness theorem yields a converging subsequnce. Then, Gromov's Arzela-Ascoli theorem also yields a uniformily converging subsequence of $\VK_i$ with limit $\VK$.
Finally, if we apply the previous stability theorem, we obtain the result. 
\end{proof}
\begin{remark}
One can also prove the stability of the condition $CD(\VK,N)$ with respepct to \textit{pointed measured Gromov-Hausdorff convergence}. 
For instance, one can follow the proof of Theorem 29.24 in \cite{viltot}.
\end{remark}

\section{Non-branching spaces and tensorization property}
\begin{lemma}
Let $(X,\de_{\sX},\m_{\sX})$ be a non-branching metric measure space that satifies $CD(\VK,N)$. Then, for every $x\in\supp\m_{\sX}$ there exists a unique geodesic between $x$ and $\m_{\sX}$-a.e. $y\in X$.
Consequently, there exists a measurable map $\Psi: X^2\rightarrow \mathcal{G}(X)$ such that $\Psi(x,y)$ is the unique geodesic between $x$ and $y$ $\m_{\sX}\otimes \m_{\sX}$-a.e.\ . 
\end{lemma}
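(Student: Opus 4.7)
The strategy is to use the $CD(\VK,N)$ condition to construct, for each $x\in\supp\m_{\sX}$, a dynamical optimal coupling from $\delta_x$ to a generic absolutely continuous target $\mu_1$, then exploit the interaction of non-branching with the absolute continuity of intermediate Wasserstein interpolants (forced by the CD inequality) to deduce uniqueness of geodesics $\mu_1$-almost everywhere, and finally apply a measurable selection theorem to obtain the Borel map $\Psi$.

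Fix $x\in\supp\m_{\sX}$; if $\supp\m_{\sX}=\{x\}$ the claim is vacuous, so by Theorem \ref{useful} we may assume $\m_{\sX}(\{x\})=0$. Set $\mu_0^\epsilon:=\m_{\sX}(\bar{B}_\epsilon(x))^{-1}\m_{\sX}|_{\bar{B}_\epsilon(x)}$, which is an absolutely continuous probability measure with bounded density (well defined by Theorem \ref{useful}) converging weakly to $\delta_x$ as $\epsilon\downarrow 0$. For an arbitrary $\mu_1=\varrho_1\m_{\sX}$ with bounded support and density, $CD(\VK,N)$ furnishes dynamical optimal couplings $\Pi_\epsilon$ and interpolants $\mu_t^\epsilon$ satisfying the CD inequality. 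Theorem \ref{useful} provides properness together with compactness of $\mathcal{G}(\bar{B}_R(x))$ for any $R$ bounding $\{x\}\cup\supp\mu_1$, so $(\Pi_\epsilon)$ is tight and a weakly convergent subsequence produces $\Pi\in\mathcal{P}(\mathcal{G}(X))$ with $(e_0)_\star\Pi=\delta_x$ and $(e_1)_\star\Pi=\mu_1$. Combining the CD inequality for each $\Pi_\epsilon$ with the lower semi-continuity of Lemma \ref{therethere} in the limit $\epsilon\to 0$ gives the CD inequality for $\Pi$ itself, and consequently $\mu_t:=(e_t)_\star\Pi$ is absolutely continuous for every $t\in(0,1)$.

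Now I would invoke non-branching. Disintegrate $\Pi=\int\Pi_y\,d\mu_1(y)$, with $\Pi_y$ supported on geodesics from $x$ to $y$. The non-branching hypothesis says that whenever $z$ is a midpoint of $(x,y_1)$ and of $(x,y_2)$ one has $y_1=y_2$; equivalently, the endpoint of a geodesic issuing from $x$ is determined by any one of its midpoints. Iterating this through a dense set of dyadic times in $(0,1)$ and using the absolute continuity of each $\mu_t$ just established, one concludes, along the lines of the standard argument of Sturm \cite{stugeo2} in the constant-curvature case (the argument is purely topological and transfers verbatim to the variable-curvature setting once Theorem \ref{useful} and Lemma \ref{therethere} are available), that $\Pi_y$ is a Dirac mass on a single geodesic for $\mu_1$-a.e.\ $y\in X$. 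Letting $\mu_1$ range over a countable family of absolutely continuous probabilities whose supports exhaust $X$ (for instance, normalized restrictions of $\m_{\sX}$ to a countable basis of bounded Borel sets), the exceptional set
\[
N_x:=\{\,y\in X:\text{more than one geodesic from }x\text{ to }y\,\}
\]
is seen to satisfy $\m_{\sX}(N_x)=0$.

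Finally, Fubini gives $(\m_{\sX}\otimes\m_{\sX})(\{(x,y):y\in N_x\})=0$, and the closed-valued multifunction $(x,y)\mapsto G(x,y):=\{\gamma\in\mathcal{G}(X):e_0(\gamma)=x,\ e_1(\gamma)=y\}$ is non-empty (by properness) and Borel measurable, so the Kuratowski--Ryll-Nardzewski selection theorem produces a Borel selector $\Psi:X^2\to\mathcal{G}(X)$; by the previous paragraph, $\Psi(x,y)$ is the unique geodesic between $x$ and $y$ for $\m_{\sX}\otimes\m_{\sX}$-a.e.\ $(x,y)$. The main obstacle lies in the second-to-last step: non-branching alone does not preclude multiple geodesics between fixed endpoints (standard spherical examples satisfy the non-branching definition used here yet admit multiple geodesics between antipodes), and it is precisely the absolute continuity of all intermediate interpolants $\mu_t$ --- forced by the CD inequality via the limiting procedure --- that rules out an $\m_{\sX}$-positive branching locus.
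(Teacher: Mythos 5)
The paper's proof of this lemma is a one-line reduction: since Theorem \ref{useful} gives properness (and $\VK$ is lower semi-continuous), $\VK$ is bounded below by a constant on every ball $\bar B_R(x)$; one then restricts to convex neighbourhoods inside such a ball (using Proposition \ref{htht}(i) to pass from $\VK\geq\underline{K}$ to the constant lower bound) and invokes Sturm's Lemma~4.1 from \cite{stugeo2} verbatim in the constant-curvature case. Your argument, by contrast, attempts to reproduce Sturm's construction directly in the variable-curvature setting: build a dynamical coupling from $\delta_x$ by approximation, pass to the limit using a semicontinuity lemma, obtain absolutely continuous interpolants, then invoke non-branching. The end point is the same, and the final ``non-branching \emph{plus} absolute continuity of interpolants implies unique geodesics'' step is deferred to Sturm in both cases, so your route is genuinely longer for no gain in generality.

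There is also a concrete gap in your limiting step. You cite Lemma~\ref{therethere} to pass the CD inequality through the limit $\epsilon\to 0$, but that lemma is stated for a sequence of dynamical couplings $\Pi_n$ with the \emph{same} fixed marginals $\mu_0,\mu_1$, and the whole point of your construction is that the source marginal $\mu_0^\epsilon=\m_{\sX}(\bar B_\epsilon(x))^{-1}\m_{\sX}|_{\bar B_\epsilon(x)}$ varies with $\epsilon$. One would need either a variant of Lemma~\ref{therethere} allowing varying marginals, or a separate argument (e.g.\ exploiting that the first term $\tau^{(1-t)}\m_{\sX}(\bar B_\epsilon(x))^{1/N'}$ in the CD inequality vanishes as $\epsilon\to 0$, together with lower semi-continuity of $\gamma\mapsto\tau^{(t)}_{\VK^+_\gamma,N'}(|\dot\gamma|)$ and of $S_N$) to justify taking the limit. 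This can be patched, but as written it does not follow from the lemma you cite. The paper avoids the issue entirely by the local reduction to the constant-curvature case.
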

\begin{proof}
Since $\VK$ is bounded from below on any ball $B_R(x)$ by Theorem \ref{useful},
one can adapt the proof of Lemma 4.1 in \cite{stugeo2}.
\end{proof}

\begin{proposition}\label{branigan}
Let $\VK:X\rightarrow \mathbb{R}$ be admissible, $N\geq 1$ and $(X,\de_{\sX},\m_{\sX})$ be a metric measure space that is non-branching. Then the following statements are equivalent
\begin{itemize}
\item[(i)] $(X,\de_{\sX},\m_{\sX})$ satisfies the curvature-dimension condition $CD(\VK,N)$. 
\item[(ii)] For each pair $\mu_0,\mu_1\in\mathcal{P}_2(X,\m_{\sX})$ there exists an optimal dynamical transference plan $\Pi$ such that
\begin{align}\label{something}
\varrho_t(\gamma_t)^{-\frac{1}{N}}\geq \tau_{\sk^-_{\gamma},\sN'}^{\sscr{(1-t)}}(|\dot{\gamma}|)\varrho_0(\gamma_0)^{-\frac{1}{N}}+\tau_{\sk^+_{\gamma},\sN}^{\sscr{(t)}}(|\dot{\gamma}|)\varrho_1(\gamma_1)^{-\frac{1}{N}}.
\end{align}
for all $t\in[0,1]$ and $\Pi$-a.e. $\gamma\in\mathcal{G}(X)$. Here $\varrho_t$ is the density of the push-forward of $\Pi$ under the map $\gamma\mapsto \gamma_t$. That is determined by
\begin{align*}
 \int_Xu(y)\varrho_t(y)d\m_{\sX}(y)=\int u(\gamma_t)d\Pi(\gamma).
\end{align*}
for all bounded measurable functions $u:X\rightarrow \mathbb{R}$.
\end{itemize}
\end{proposition}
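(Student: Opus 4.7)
The plan is to follow the pattern of Sturm's argument for Proposition 4.2 in \cite{stugeo2}, adapted to the variable curvature bound $\VK$ and exploiting non-branching in the same essential way.

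For the easy direction $(ii) \Rightarrow (i)$, assume $\Pi$ satisfies the pointwise inequality (\ref{something}). Setting $\mu_t = (e_t)_\star \Pi = \varrho_t \m_{\sX}$, the push-forward identity gives
\[
S_{N}(\mu_t|\m_{\sX}) = -\int_X \varrho_t^{1-\frac{1}{N}}\, d\m_{\sX} = -\int \varrho_t(\gamma_t)^{-\frac{1}{N}}\, d\Pi(\gamma).
\]
Integrating (\ref{something}) against $\Pi$ yields $S_{N}(\mu_t|\m_{\sX}) \leq T^{\sscr{(t)}}_{\VK,N}(\Pi|\m_{\sX})$, which is (\ref{curvaturedimension}) for $N' = N$; the corresponding statement for $N' > N$ follows from Proposition \ref{htht}(i).

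The direction $(i) \Rightarrow (ii)$ is more delicate and uses non-branching essentially. My plan has three steps. First, given the dynamical optimal plan $\Pi$ from (i), I would show that every normalized restriction $\Pi^B := \Pi(B)^{-1} \Pi|_B$ (for Borel $B \subset \mathcal{G}(X)$ with $\Pi(B) > 0$) is itself a dynamical optimal plan between its marginals $\mu^B_j := (e_j)_\star \Pi^B = \varrho^B_j\, \m_{\sX}$, with the density compatibility
\[
\varrho^B_t(y) \cdot \Pi(B) = \varrho_t(y) \quad \mbox{for $\m_{\sX}$-a.e.\ $y \in e_t(B)$.}
\]
Non-branching enters here: it forces the evaluation map $e_t$ (for $t \in (0,1)$) to be injective on $\supp \Pi$ modulo $\Pi$-null sets, which is what allows the decomposition of $\varrho_t$ along different tubes of geodesics. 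Second, apply $CD(\VK,N)$ to the restricted data $(\mu_0^B, \mu_1^B, \Pi^B)$ and rewrite in terms of the ambient densities $\varrho_j$ via the compatibility above; after multiplication by $\Pi(B)^{1-\frac{1}{N}}$, this yields the localized inequality
\[
-\int_B \varrho_t(\gamma_t)^{-\frac{1}{N}}\, d\Pi(\gamma) \leq -\int_B \Big[\tau_{\sk^-_\gamma,N}^{\sscr{(1-t)}}(|\dot{\gamma}|)\varrho_0(\gamma_0)^{-\frac{1}{N}} + \tau_{\sk^+_\gamma,N}^{\sscr{(t)}}(|\dot{\gamma}|)\varrho_1(\gamma_1)^{-\frac{1}{N}}\Big]\, d\Pi(\gamma).
\]
Third, a Lebesgue differentiation argument along a countable algebra generating the Borel $\sigma$-algebra of $\mathcal{G}(X)$ upgrades this inequality, valid for arbitrary Borel $B$, to the pointwise statement (\ref{something}) at $\Pi$-a.e.\ geodesic.

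The main technical obstacle will be step one: establishing the density compatibility $\varrho^B_t \cdot \Pi(B) = \varrho_t \mathbf{1}_{e_t(B)}$. This is exactly where non-branching does its work, since it guarantees that no two geodesics in $\supp \Pi$ can meet at any interior time $t$, so that $\varrho_t$ admits a clean decomposition according to the carrying geodesic. Once this is in place, the presence of the variable coefficients $\tau_{\sk^{\pm}_\gamma, N}^{\sscr{(\cdot)}}(|\dot{\gamma}|)$ introduces no additional difficulty, since by Proposition \ref{contin} they depend lower semi-continuously on $\gamma \in \mathcal{G}(X)$ and thus integrate well against $\Pi^B$ and behave well under the subsequent Lebesgue differentiation.
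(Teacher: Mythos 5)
Your direction $(ii)\Rightarrow(i)$ has a circularity: you integrate (\ref{something}) with $N'=N$ and then invoke Proposition~\ref{htht}(i) to cover $N'>N$, but that proposition \emph{assumes} $CD(\VK,N)$, which is precisely what you are proving. The correct move—and what the paper does—is to lift the pointwise inequality (\ref{something}) from $N$ to arbitrary $N'\geq N$ \emph{before} integrating, using H\"older's inequality together with Lemma~\ref{gr} (specifically $\tau_{\sk^{-}_{\gamma},\sN}^{\sscr{(1-t)}}(|\dot{\gamma}|)^{N/N'}(1-t)^{1-N/N'}\geq \tau_{\sk^-_{\gamma},\sN'}^{\sscr{(1-t)}}(|\dot{\gamma}|)$), so that one and the same $\Pi$ works for every $N'$. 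This is a small fix, but the argument as written does not go through.

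The more serious gap is in step two of your $(i)\Rightarrow(ii)$ direction. After forming the restriction $\Pi^B$, you apply $CD(\VK,N)$ to the pair $(\mu_0^B,\mu_1^B)$. The definition of $CD(\VK,N)$ only asserts that \emph{some} optimal dynamical transference plan between $\mu_0^B$ and $\mu_1^B$ satisfies (\ref{curvaturedimension}); it does not say that $\Pi^B$ does. Non-branching gives you injectivity of $e_t$ on $\supp\Pi$ at interior times, which does secure your density compatibility $\varrho_t^B\cdot\Pi(B)=\varrho_t\mathbf 1_{e_t(B)}$, but it does not by itself guarantee that the plan produced by the $CD$ condition coincides with $\Pi^B$. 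The paper closes this gap by first deriving a local measure contraction property from $CD(\VK,N)$ (using that $\VK$ is bounded below on balls, Theorem~\ref{useful}) and then invoking the Cavalletti--Huesmann theorem, which yields that every optimal coupling between absolutely continuous marginals on a non-branching space satisfying a local MCP is unique and map-induced; only after that can one identify the $CD$-produced plan with the restriction $\Pi^B$ and run the Lebesgue differentiation argument you describe. Without this uniqueness input your step two is not justified.

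Once the uniqueness ingredient is supplied, the rest of your outline (restriction, density compatibility, Lebesgue differentiation over a countable generating algebra) does match the route the paper takes, which is essentially Sturm's argument from \cite{stugeo2} adapted to the variable $\VK$; you are also right that the variable coefficients add nothing new at that stage because of Proposition~\ref{contin}.
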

\begin{proof}
``$\Leftarrow$'': Let $N'>N$ and $\varrho_id\m_{\sX}=\mu_i\in \mathcal{P}_2(X,\m_{\sX})$ for $i=0,1$. H\"older's inequality yields
\begin{align*}
\varrho_t(\gamma_t)^{-\frac{1}{N'}}&\geq \left(\tau_{\sk^-_{\gamma},\sN'}^{\sscr{(1-t)}}(|\dot{\gamma}|)\varrho_0(\gamma_0)^{-\frac{1}{N}}+\tau_{\sk^+_{\gamma},\sN}^{\sscr{(t)}}(|\dot{\gamma}|)\varrho_1(\gamma_1)^{-\frac{1}{N}}\right)^{\frac{N}{N'}}\\
&\geq \tau_{\sk^-_{\gamma},\sN'}^{\sscr{(1-t)}}(|\dot{\gamma}|)^{\frac{N}{N'}}(1-t)^{(1-\frac{N}{N'})}\varrho_0(\gamma_0)^{\frac{-1}{N'}}+\tau_{\sk^+_{\gamma},\sN'}^{\sscr{(t)}}(|\dot{\gamma}|)^{\frac{N}{N'}}t^{(1-\frac{N}{N'})}\varrho_1(\gamma_1)^{\frac{-1}{N'}}
\end{align*}
Additionally, Lemma \ref{gr} yields the estimate $$\tau_{\sk^{-}_{\gamma},\sN}^{\sscr{(1-t)}}(|\dot{\gamma}|)^{\frac{N}{N'}}(1-t)^{1-\frac{N}{N'}}\geq \tau_{\sk^-_{\gamma},\sN'}^{\sscr{(1-t)}}(|\dot{\gamma}|)$$ and similar for the term
involving $\VK^+_{\gamma}$.
Finally, integrating the previous inequality with respect to $\Pi$ yields the condition $CD(\VK,N)$.
\smallskip\\
``$\Rightarrow$'': Consider probability measures $\mu_i=\varrho_id\m_{\sX}$ for $i=0,1$. Let $\Pi$ be an optimal dynamical coupling.
Since for $\m_{\sX}\otimes \m_{\sX}$-a.e. pair $(x,y)$ there exists a unique geodesic $\gamma_{x,y}$, 
there exist an optimal coupling $\pi$ such that $\Pi$ can be written in the form $\delta_{\gamma_{x,y}}d\pi(x,y)$. 
We consider closed balls of increasing radius $R$ for some fixed point $x_0$. $\VK$ is bounded from below by a constant $\underline{\VK}$ on each Ball, and therefore 
one can follow \cite{stugeo2} and prove a local measure contraction property (in the sense of \cite{stugeo2}) that holds in each ball (for instance see \cite{stugeo2}). 
Hence, we can apply the main result of Cavalletti and Huesmann in \cite{cavallettihuesmann}. It tells us that, if a measure contraction property holds locally on a non-branching space, 
each optimal coupling between absolutely continuous probability 
measures is unique and induced 
by a measurable map. Therefore, the curvature-dimension condition for $\mu_0$ and $\mu_1$ becomes
\begin{align*}
&\int_X\int\varrho_t(\gamma_t)^{-\frac{1}{N}}\delta_{\gamma_{x,y}}(d\gamma)d\pi(x,y)\\
&\geq \int_{X^2}\int\left[\tau_{\sk^-_{\gamma},\sN}^{\sscr{(1-t)}}(|\dot{\gamma}|)\varrho_0(\gamma_0)^{-\frac{1}{N}}+\tau_{\sk^+_{\gamma},\sN}^{\sscr{(t)}}(|\dot{\gamma}|)\varrho_1(\gamma_1)^{-\frac{1}{N}}\right]\delta_{\gamma_{x,y}}(d\gamma)d\pi(x,y).
\end{align*}
Now, we can follow exactly the proof of the corresponding result in \cite{stugeo2}.
\end{proof}
\begin{proposition}\label{prop73}
Let $(X,\de_{\sX},\m_{\sX})$ be a non-branching metric measure space that satisfies $CD(\VK,N)$, let $\VK':X\rightarrow \mathbb{R}$ be lower semi-continuous and let $V:X\rightarrow [0,\infty)$ be strongly $\VK'V$-convex in the sense of Definition \ref{ahr}.
Then $(X,\de_{\sX},V^{N'}\m_{\sX})$ satisfies the condition $CD(\VK+\VK',N+N')$.
\end{proposition}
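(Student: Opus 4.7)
The plan is to exploit the density-wise characterization of $CD(\VK,N)$ on non-branching spaces given by Proposition~\ref{branigan}. Since non-branching is a purely metric property, $(X,\de_{\sX},V^{N'}\m_{\sX})$ is still non-branching, and it suffices to verify the pointwise density inequality for the new reference measure. Take $\tilde\mu_i=\tilde\varrho_iV^{N'}\m_{\sX}\in\mathcal{P}_2(X,V^{N'}\m_{\sX})$, whose $\m_{\sX}$-densities are $\varrho_i:=\tilde\varrho_iV^{N'}$. Proposition~\ref{branigan} applied to $\varrho_0\,d\m_{\sX},\varrho_1\,d\m_{\sX}$ under the hypothesis $CD(\VK,N)$ furnishes an optimal dynamical plan $\Pi$ such that, writing $\theta=|\dot\gamma|$ and letting $\varrho_t$ be the $\m_{\sX}$-density of $(e_t)_\star\Pi$ (so the new density is $\tilde\varrho_t=\varrho_t/V^{N'}$), for $\Pi$-a.e.\ $\gamma$
\[
\varrho_t(\gamma_t)^{-1/N}\geq \tau^{\sscr{(1-t)}}_{\VK^-_\gamma,N}(\theta)\,\varrho_0(\gamma_0)^{-1/N}+\tau^{\sscr{(t)}}_{\VK^+_\gamma,N}(\theta)\,\varrho_1(\gamma_1)^{-1/N}.
\]

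The second ingredient is the $\VK'V$-convexity of $V$ (Definition~\ref{ahr}), which directly produces $V(\gamma_t)\geq\sigma^{\sscr{(1-t)}}_{\VK'^-_\gamma}(\theta)V(\gamma_0)+\sigma^{\sscr{(t)}}_{\VK'^+_\gamma}(\theta)V(\gamma_1)$; using the monotonicity of distortion coefficients in Proposition~\ref{monotonicity} to pass from the raw-$\kappa$ $\sigma$'s of Definition~\ref{ahr} to the $CD$-normalized $\sigma_{\kappa,N'}$ appearing in Corollary~\ref{somethingmore}, one recovers along the same geodesic
\[
V(\gamma_t)\geq\sigma^{\sscr{(1-t)}}_{\VK'^-_\gamma,N'}(\theta)\,V(\gamma_0)+\sigma^{\sscr{(t)}}_{\VK'^+_\gamma,N'}(\theta)\,V(\gamma_1).
\]
I would then combine the two displays via the concavity inequality $(a+b)^p(c+d)^{1-p}\geq a^pc^{1-p}+b^pd^{1-p}$ (for $a,b,c,d\geq 0$ and $p\in[0,1]$) with $p=N/(N+N')$. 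Since $\tilde\varrho_t(\gamma_t)^{-1/(N+N')}=[\varrho_t(\gamma_t)^{-1/N}]^{N/(N+N')}V(\gamma_t)^{N'/(N+N')}$, each summand of the resulting lower bound carries a coefficient of the form $[\tau^{\sscr{(\cdot)}}_{\VK^\pm_\gamma,N}(\theta)]^{N/(N+N')}[\sigma^{\sscr{(\cdot)}}_{\VK'^\pm_\gamma,N'}(\theta)]^{N'/(N+N')}$, which by Corollary~\ref{somethingmore} is bounded below by $\tau^{\sscr{(\cdot)}}_{(\VK+\VK')^\pm_\gamma,N+N'}(\theta)$, multiplying $\tilde\varrho_{0/1}(\gamma_{0/1})^{-1/(N+N')}$.

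This is precisely the density inequality of Proposition~\ref{branigan} for $(X,\de_{\sX},V^{N'}\m_{\sX})$ with parameters $(\VK+\VK',N+N')$, and the reverse direction of that proposition concludes $CD(\VK+\VK',N+N')$. I expect the main technical obstacle to be the first step after invoking the $V$-convexity: the two normalizations of $\sigma$ disagree, since Definition~\ref{ahr} is formulated with ODE coefficient $\VK'$ while Corollary~\ref{somethingmore} is formulated with $\VK'/N'$. Monotonicity of $\sigma^{\sscr{(t)}}_\kappa$ in $\kappa$ provides the required one-sided comparison, but one must check that the comparison points in the direction consistent with the signs of $\VK'$ and with the structure of the $V$-convexity inequality; after that is settled, the remaining manipulations are a routine combination of the CD-inequality, the $V$-inequality, the concavity-of-weighted-geometric-means, and Corollary~\ref{somethingmore}.
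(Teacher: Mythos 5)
Your strategy --- Proposition~\ref{branigan} for the density-level characterization on a non-branching space, the $\VK'V$-convexity of $V$, the weighted geometric-mean/H\"older inequality, Corollary~\ref{somethingmore}, and the converse direction of Proposition~\ref{branigan} --- is precisely the combination cited in the paper's one-line proof, so on the level of the plan you and the paper agree.

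The caveat you flag, though, is more than a sign check and cannot be patched by Proposition~\ref{monotonicity}. Definition~\ref{ahr}'s $\VK'V$-convexity produces the inequality with $\sigma^{\sscr{(t)}}_{\VK'_\gamma}$ (ODE coefficient $\VK'$), while Corollary~\ref{somethingmore} requires $\sigma^{\sscr{(t)}}_{\VK'_\gamma,N'}=\sigma^{\sscr{(t)}}_{\VK'_\gamma/N'}$ (coefficient $\VK'/N'$). Monotonicity in $\kappa$ replaces the former by the latter only where $\VK'\ge\VK'/N'$, that is, where $\VK'\ge 0$ (for $N'>1$); on the set $\left\{\VK'<0\right\}$ the inequality runs the opposite way and the substitution is simply false. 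If instead you keep the raw $\sigma_{\VK'_\gamma}$ and apply Lemma~\ref{gr} with the exponents $\frac{N-1}{N+N'-1}$ and $\frac{N'}{N+N'-1}$, what you obtain is the coefficient $\bigl[\sigma^{\sscr{(t)}}_{(\VK_\gamma+N'\VK'_\gamma)/(N+N'-1)}\bigr]^{N+N'-1}$, i.e.\ the computation actually yields $CD(\VK+N'\VK',N+N')$ rather than $CD(\VK+\VK',N+N')$. For the conclusion as stated the hypothesis has to be read as ``$V$ is $\frac{\VK'}{N'}V$-convex'' (equivalently, $-N'\log V$ is $(\VK',N')$-convex in the sense of Definition~\ref{ahr}); then $\sigma^{\sscr{(t)}}_{\VK'_\gamma,N'}$ appears directly, no monotonicity step is needed, and Corollary~\ref{somethingmore} gives the target distortion coefficient. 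With that normalization the rest of what you wrote closes the argument: $\tilde\varrho_t=\varrho_t/V^{N'}$, the inequality $(a+b)^p(c+d)^{1-p}\ge a^pc^{1-p}+b^pd^{1-p}$ with $p=N/(N+N')$, Corollary~\ref{somethingmore}, and the converse of Proposition~\ref{branigan}.
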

\begin{proof}
The proof is a straighfoward calculation using the characterization of $CD(\VK,N)$ for non-branching spaces, Corollary \ref{somethingmore} and H\"older's inequality.
\end{proof}

\begin{theorem}\label{tensorization}
Let $(X_i,\de_{\sX_i},\m_{\sX_i})$ be non-branching metric measure spaces for $i=1,\dots,k$ statisfying the condition $CD(\VK_i,N_i)$ for admissible functions $\VK_i:X_i\rightarrow \mathbb{R}$ and $N_i\geq 1$. 
Then the metric measure space
\begin{align*}
\left(\Pi_{i=1}^kX_i,\ \sqrt{\sum\nolimits_{i=1}^k\de_{\sX_i}^2},\ \bigotimes\nolimits_{i=1}^k\m_{\sX_i}\right)=(Y,\de_{\sY},\m_{\sY})
\end{align*}
satisfies the condition 
\begin{align*}
CD\left(\min_{i=1,\dots,k}\VK_i,\max_{i=1,\dots,k}N\right)
\end{align*}
where $(\min_{i=1,\dots, k}\VK_i)(x_1,\dots,x_k)=\min\left\{\VK_i(x_i):i=1,\dots,k\right\}$.
\end{theorem}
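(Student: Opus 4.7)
The plan is to reduce by induction to $k=2$ and then apply the pointwise characterization of $CD(\VK,N)$ on non-branching spaces from Proposition~\ref{branigan}. Write $Y=X_1\times X_2$. Every geodesic $\gamma$ in $Y$ splits as $\gamma=(\gamma^1,\gamma^2)$ with $\gamma^i$ a constant-speed geodesic in $X_i$ and $|\dot\gamma|^2=|\dot\gamma^1|^2+|\dot\gamma^2|^2$, so $Y$ is non-branching. Theorem~\ref{useful} applied to each factor yields the local measure contraction property on each $X_i$, hence by the Cavalletti--Huesmann theorem invoked in the proof of Proposition~\ref{branigan} the optimal coupling between absolutely continuous measures on each $X_i$ is unique and induced by a map.

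Fix $\mu_0=\varrho_0\m_{\sY},\mu_1=\varrho_1\m_{\sY}\in\mathcal{P}_2(Y,\m_{\sY})$ with bounded densities and bounded support. Since $\de_{\sY}^2=\de_{\sX_1}^2+\de_{\sX_2}^2$, the (unique) optimal coupling $\pi$ on $Y\times Y$ projects under $(P_i\otimes P_i)$ to optimal couplings $\pi^{(i)}$ on $X_i$, and a disintegration of $\pi$ along $\pi^{(1)}$ yields, for $\pi^{(1)}$-a.e.~$(x_1,y_1)$, an optimal coupling in $X_2$ between the conditional measures $\mu_0^{2|1}(x_1,\cdot)$ and $\mu_1^{2|1}(y_1,\cdot)$ obtained by disintegrating $\mu_0,\mu_1$ along $P_1$. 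Define $\Pi=\Psi_\star\pi$ where $\Psi(x,y)=(\gamma^1_{x_1,y_1},\gamma^2_{x_2,y_2})$ is the measurable selection of product geodesics; factorwise uniqueness then identifies $(P_i)_\star\Pi$ and the conditional push-forwards with the unique optimal dynamical plans in each factor, so Proposition~\ref{branigan}\,(ii) applies to them.

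Writing $\varrho_j=\varrho_j^{(1)}\cdot\varrho_j^{2|1}$ for $j\in\{0,t,1\}$ and applying the factor pointwise inequalities along $\gamma^1$ and $\gamma^2$, put $A_i=\tau^{(1-t)}_{(\VK_i)_{\gamma^i}^-,N_i}(|\dot\gamma^i|)$, $B_i=\tau^{(t)}_{(\VK_i)_{\gamma^i}^+,N_i}(|\dot\gamma^i|)$, and set $N:=N_1+N_2$. Super-additivity of the homogeneous concave function $(a,b)\mapsto a^{N_1/N}b^{N_2/N}$ combines the two factor inequalities into
\begin{align*}
\varrho_t(\gamma_t)^{-1/N}\geq A_1^{N_1/N}A_2^{N_2/N}\,\varrho_0(\gamma_0)^{-1/N}+B_1^{N_1/N}B_2^{N_2/N}\,\varrho_1(\gamma_1)^{-1/N}.
\end{align*}
It remains to show $A_1^{N_1/N}A_2^{N_2/N}\geq \tau^{(1-t)}_{\min(\VK_1,\VK_2)_\gamma^-,N}(|\dot\gamma|)$, and likewise for $B$. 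Using $\tau^N=t\,\sigma^{N-1}$ together with $\sigma_{\kappa_0}^{(1-t)}(1)=1-t$ when $\kappa_0\equiv 0$, and with $\kappa_i(s)=\theta_i^2\VK_i(\gamma^i(s))$ for $i=1,2$, this amounts to
\begin{align*}
\sigma_{\kappa_0}^{(1-t)}(1)\,\sigma_{\kappa_1/(N_1-1)}^{(1-t)}(1)^{N_1-1}\,\sigma_{\kappa_2/(N_2-1)}^{(1-t)}(1)^{N_2-1}\geq \sigma_{\theta^2\min(\VK_1,\VK_2)\circ\gamma/(N-1)}^{(1-t)}(1)^{N-1}.
\end{align*}
The log-convexity of $\kappa\mapsto \log\sigma_\kappa$ from Lemma~\ref{gr}, applied with weights $\tfrac{1}{N-1},\tfrac{N_1-1}{N-1},\tfrac{N_2-1}{N-1}$ (which sum to $1$), bounds the left side below by $\sigma^{N-1}$ for the averaged function $[\theta_1^2\VK_1\circ\gamma^1+\theta_2^2\VK_2\circ\gamma^2]/(N-1)$; monotonicity of $\sigma$ in the curvature (Proposition~\ref{monotonicity}) together with the pointwise estimate $\theta_1^2\VK_1+\theta_2^2\VK_2\geq (\theta_1^2+\theta_2^2)\min(\VK_1,\VK_2)=\theta^2\min(\VK_1,\VK_2)\circ\gamma$ then closes the inequality.

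The main obstacle is the conditioning step: in order to invoke $CD(\VK_2,N_2)$ on $X_2$ with the conditional measures $\mu_0^{2|1}(x_1,\cdot),\mu_1^{2|1}(y_1,\cdot)$ one must identify the disintegrated plans on $X_2$ as \emph{the} unique optimal dynamical plans between those conditional marginals; this uses both the product-cost identity $\de_{\sY}^2=\de_{\sX_1}^2+\de_{\sX_2}^2$ and, crucially, uniqueness of optimal plans on non-branching $CD$-spaces via the Cavalletti--Huesmann theorem. A secondary algebraic subtlety is the exponent bookkeeping: each factor's $\tau$-coefficient reduces to a $\sigma$ in dimension $N_i-1$, so combining two of them only gives a $\sigma$ in dimension $N_1+N_2-2$, and the missing unit weight needed to reach dimension $N-1$ is precisely the $(1-t)=\sigma_0^{(1-t)}(1)$ factor absorbed through the identity $\tau^N=t\,\sigma^{N-1}$.
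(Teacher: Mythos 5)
Your proof is correct in substance and lands on the same two key ingredients as the paper's: the pointwise characterization of $CD(\VK,N)$ on non-branching spaces from Proposition~\ref{branigan}\,(ii), and the concatenation inequality for distortion coefficients, which you derive from the log-convexity in Lemma~\ref{gr} and monotonicity (Proposition~\ref{monotonicity}), whereas the paper cites the packaged statement Corollary~\ref{somethingmore} — the two are the same fact. The genuine difference lies in how arbitrary (non-product) measures are treated. The paper first restricts to $\mu_0=\mu_0^{(1)}\otimes\mu_0^{(2)}$, $\mu_1=\mu_1^{(1)}\otimes\mu_1^{(2)}$, where the dynamical plan is literally $\Pi^{(1)}\otimes\Pi^{(2)}$ and the density along the geodesic factors trivially, and then outsources the general case to \cite{bast} and \cite{dengsturm}. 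You instead handle general $\mu_0,\mu_1$ directly by disintegrating the unique optimal coupling $\pi$ along its $X_1$-marginal $\pi^{(1)}$. The step that makes this legal — and that should be stated explicitly as the hinge of your argument rather than as an afterthought — is that $\pi^{(1)}$ is not merely optimal but is induced by an (essentially invertible) map via the Cavalletti--Huesmann theorem; only then does conditioning on $(x_1,y_1)$ collapse to conditioning on $x_1$ alone, which is what identifies the marginals of $\pi_{(x_1,y_1)}$ with $\mu_0^{2|1}(x_1,\cdot)$ and $\mu_1^{2|1}(y_1,\cdot)$. Likewise the factorization $\varrho_t=\varrho_t^{(1)}\cdot\varrho_t^{2|1}$ at interior times $t$ uses that the $X_1$-geodesics do not cross (non-branching plus map-induced optimality) so the fiber over $\gamma^1_{x_1,y_1}(t)$ is well-defined; this is implicit in your writeup but worth flagging. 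Your route is more self-contained but trades that for more probabilistic bookkeeping; the paper's route makes the core algebraic combination transparent on product measures and leans on \cite{bast} for the conditioning machinery. Finally, note that both arguments produce dimension $N_1+\dots+N_k$ (your $N=N_1+N_2$ and the paper's ``Claim'' ending in $\tau^{\scriptscriptstyle (t)}_{\cdot,N_1+N_2}$), so the $\max_{i}N$ in the stated theorem should be read as $\sum_i N_i$; you are consistent with the proof, not the theorem's typo.
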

\begin{proof}
It is enough to consider $k=2$ and measures of $\mu_0$ and $\mu_1$ in $\mathcal{P}_2(Y,\m_{\sY})$ of the form 
$
\mu_0=\mu_0^{\sscr{(1)}}\otimes \mu_0^{\sscr{(2)}} \mbox{ and } \mu_1=\mu_1^{\sscr{(1)}}\otimes \mu_1^{\sscr{(2)}}.
$
Then general case follows in the same way as in \cite{bast} for instance. Consider dynamical optimal couplings $\Pi^{\sscr{(i)}}$ for $\mu_0^{\sscr{(i)}}$ and $\mu_0^{\sscr{(i)}}$ such that (\ref{something}) 
holds according to our curvature assmuption. Let $(e_0,e_1)_{\star}\Pi^{\sscr{(i)}}=\pi^{\sscr{(i)}}$.
The pushforward of $\pi^{\sscr{(1)}}\otimes \pi^{\sscr{(2)}}$ with respect to 
\begin{align*}
(x_0^{\sscr{(1)}}, x_1^{\sscr{(1)}},x_0^{\sscr{(2)}}, x_1^{\sscr{(2)}})\mapsto (x_0^{\sscr{(1)}},x_0^{\sscr{(2)}}, x_1^{\sscr{(1)}}, x_1^{\sscr{(2)}})
\end{align*}
becomes an optimal coupling $\pi$ between $\mu_0$ and $\mu_1$. There is also a measurable map $(\gamma^{\sscr{(1)}},\gamma^{\sscr{(2)}})\in \mathcal{G}(X_1)\times \mathcal{G}(X_2)\mapsto (\gamma^{\sscr{(1)}},\gamma^{\sscr{(2)}})\in \mathcal{G}(Z)$.
Therefore, we can consider the pushforward $\Pi$ of $\Pi^{\sscr{(1)}}\times \Pi^{\sscr{(2)}}$ with respect to this map. Since $(e_0,e_1)_{\star}\Pi=\pi$, $\Pi$ is an optimal dynamical plan for $\mu_0$ and $\mu_1$.
\smallskip\\
\textit{Claim: For geodesics $\gamma^{\sscr{(1)}}\in\mathcal{G}(X_1)$ and $\gamma^{\sscr{(2)}}\in\mathcal{G}(X_2)$ consider $\gamma=(\gamma^{\sscr{(1)}},\gamma^{\sscr{(2)}})\in\mathcal{G}(Y)$, then we have}
\begin{align*}
\tau^{\sscr{(t)}}_{\sk_{1,\gamma},\sN_1}(|\dot{\gamma}^{\sscr{(1)}}|)^{\sN_1}\cdot \tau^{\sscr{(t)}}_{\sk_{2,\gamma},\sN_2}(|\dot{\gamma}^{\sscr{(2)}}|)^{\sN_2}\geq \tau^{\sscr{(t)}}_{\VK_{\gamma},\sN_1+\sN_2}\left(|\dot{\gamma}|\right)^{\sN_1+\sN_2}
\end{align*}
The claim follows immediately from Corollary \ref{somethingmore} combined with the observations that $\tau^{\sscr{(t)}}_{\sk_{\gamma},\sN}(|\dot{\gamma}|)=\tau^{\sscr{(t)}}_{\sk_{\gamma}|\dot{\gamma}|^2,\sN}(1)$, that $|\dot{\gamma}|^2=|\dot{\gamma}^{\sscr{(1)}}|^2+|\dot{\gamma}^{\sscr{(2)}}|^2$, and that 
\begin{align*}
\VK_i\circ \bar{\gamma}^{\sscr{(i)}}(t|\dot{\gamma}^{\sscr{(i)}}|)
=\VK_i\circ\gamma^{\sscr{(i)}}(t)\geq \min_{i=1,2}\left\{\VK_i\circ\gamma(t)\right\}
=\left(\min\nolimits_{i=1,2}\VK_i\circ\bar{\gamma}\right)(t|\dot{\gamma}|).
\end{align*}
for $i=1,2$.
The rest of the proof works exactly like the proof of the corresponding result in \cite{dengsturm}.
%
%
%
\end{proof}
\section{Globalization of the reduced curvature-dimension condition}
\begin{definition}
If we replace in Definition \ref{bigg}
$$\tau_{\VK^{\sscr{-/+}}_{\gamma},N'}^{\sscr{(1-t)/(t)}}(|\dot{\gamma}|)\mbox{ by }\sigma_{\VK^{\sscr{-/+}}_{\gamma},N'}^{\sscr{(1-t)/(t)}}(|\dot{\gamma}|).$$
we say $(X,\de_{\sX},\m_{\sX})$ satisfies the \textit{reduced curvature-dimension condition} $CD^*(\VK,N)$. 
Obviously, we always have that $CD(\VK,N)$ implies $CD^*(\VK,N)$.  
\smallskip\\
We say that $(X,\de_{\sX},\m_{\sX})$ satisfies the the curvature-dimension condition \textit{locally} - denoted by $CD_{loc}(\VK,N)$ - if for 
any point $x$ there exists a neighborhood $U_x$ such that for each pair $\mu_0,\mu_1\in \mathcal{P}_2(X,\m_{\sX})$ with bounded support in $U_x$, one can find
a geodesic $\mu_t\in \mathcal{P}_2(X,\m_{\sX})$ and an optimal dynamical coupling $\Pi\in \mathcal{P}(\mathcal{G}(X))$ such that (\ref{curvaturedimension}) holds. 
Similar, we define $CD^*_{loc}(\VK,N)$.
\end{definition}
\begin{remark}
All the previous results of this article also hold for the condition $CD^*(\VK,N)$ though constants and estimates are in general not sharp. 
\end{remark}
\begin{theorem}\label{globalization}
Let $(X,\de_{\sX},\m_{\sX})$ be a non-branching and geodesic metric measure space with $\supp\m_{\sX}=X$. Let $\VK:X\rightarrow \mathbb{R}$ be admissible.
Then the curvature dimension condition $CD^*(\VK,N)$ holds if and only if it holds locally.
\end{theorem}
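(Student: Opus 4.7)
The plan is to adapt the Bacher--Sturm local-to-global argument for the constant-curvature reduced condition to the variable-curvature setting, using the ODE characterization of $\kappa u$-concavity from Corollary \ref{central2} (equivalently Proposition \ref{central}) as the bridge that glues local inequalities into a global one. The direction $CD^*(\VK,N)\Rightarrow CD^*_{loc}(\VK,N)$ is immediate from the definitions, so I focus on the converse.

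First I would establish the $CD^*$ analogue of Proposition \ref{branigan}: under non-branching, $CD^*(\VK,N)$ is equivalent to the existence, for every pair $\mu_0=\varrho_0\m_{\sX},\mu_1=\varrho_1\m_{\sX}\in\mathcal{P}_2(X,\m_{\sX})$, of an optimal dynamical coupling $\Pi$ such that
\begin{align*}
\varrho_t(\gamma_t)^{-\frac{1}{N}}\ge \sigma^{\sscr{(1-t)}}_{\VK^{-}_{\gamma},N}(|\dot\gamma|)\,\varrho_0(\gamma_0)^{-\frac{1}{N}}+\sigma^{\sscr{(t)}}_{\VK^{+}_{\gamma},N}(|\dot\gamma|)\,\varrho_1(\gamma_1)^{-\frac{1}{N}}
\end{align*}
for all $t\in[0,1]$ and $\Pi$-a.e.\ $\gamma\in\mathcal{G}(X)$. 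The argument is verbatim that of Proposition \ref{branigan}: $CD^*_{loc}$ provides a local measure contraction property (via the $CD^*$-version of Theorem \ref{useful}), so the Cavalletti--Huesmann uniqueness theorem for optimal couplings on non-branching spaces \cite{cavallettihuesmann} still applies.

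Now fix $\mu_0,\mu_1\in\mathcal{P}_2(X,\m_{\sX})$ with bounded support, let $\Pi$ be the corresponding optimal dynamical coupling, and $\varrho_t\m_{\sX}$ the induced $W_2$-geodesic. For $\Pi$-a.e.\ $\gamma$ set $u_\gamma(t):=\varrho_t(\gamma_t)^{-1/N}$. Given $t_0\in(0,1)$, I pick a neighborhood $U$ of $x_0:=\gamma_{t_0}$ on which $CD^*_{loc}$ applies and $\epsilon>0$ so small that $\gamma([t_0-\epsilon,t_0+\epsilon])\subset U$. Restricting $\Pi$ to the set of geodesics passing through a small ball around $x_0$ at time $t_0$ and renormalizing produces, by non-branching and uniqueness, an optimal dynamical coupling between absolutely continuous sub-marginals supported in $U$. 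Applying $CD^*_{loc}$ to this restriction and invoking the pointwise reformulation above, I obtain the pointwise $\sigma$-inequality for $u_\gamma$ on $[t_0-\epsilon,t_0+\epsilon]$. By Corollary \ref{central2} (applied with the metric space $[0,1]$ and $\kappa=\VK\circ\gamma/N$), this is equivalent to the distributional differential inequality
\begin{align*}
u_\gamma''(t)+\frac{\VK(\gamma(t))}{N}|\dot\gamma|^2\,u_\gamma(t)\le 0 \qquad\text{on }(t_0-\epsilon,t_0+\epsilon).
\end{align*}
Since $t_0$ is arbitrary, this holds distributionally on all of $(0,1)$, and Corollary \ref{central2} in the converse direction applied to the full interval yields the global $\sigma$-concavity of $u_\gamma$, which by the pointwise reformulation is exactly $CD^*(\VK,N)$.

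The main obstacle will be the faithful restriction step: I must verify that the sub-coupling of $\Pi$ concentrated on geodesics passing through $U$ at time $t_0$, once renormalized, truly qualifies as a test coupling for $CD^*_{loc}$, and that its intermediate density along $\gamma$ coincides with $\varrho_t$ up to the normalization constant so that $\sigma$-concavity transfers cleanly to $u_\gamma$. Both points depend essentially on the non-branching hypothesis, which rules out the splitting and recombination of geodesic masses that would otherwise spoil the restriction. The ultimate reason the argument succeeds is the additive, exponential-free structure of the $\sigma$-coefficients encoded in Corollary \ref{central2}: local differential inequalities glue into a global one without the multiplicative loss that plagues $\tau$, and this is precisely why only the reduced condition $CD^*$ admits a globalization property here.
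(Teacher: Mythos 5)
Your proposal follows essentially the same route as the paper: reduce to the pointwise density reformulation via non-branching and Cavalletti--Huesmann uniqueness, restrict the optimal dynamical plan to short time intervals and small spatial pieces where $CD^*_{loc}$ applies, and glue the resulting local $\sigma$-concavity into the global one via the ODE characterization of Corollary \ref{central2}. The only cosmetic difference is that the paper localizes the plan by a finite disjoint cover $L_1,\dots,L_k$ at the left endpoint $\bar t$ of each subinterval (which cleanly handles the quantifier order on $\gamma$, $t_0$, and the neighborhood), whereas you localize by the position at an interior time $t_0$ — the covering/measurability bookkeeping you flag as the ``main obstacle'' is precisely what the paper's $L_i$--$M_i$ decomposition is designed to settle.
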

\begin{proof}
We only have to show the implication $CD^*_{loc}(\VK,N)$ implies $CD^*(\VK,N)$. Let us assume the curvature-dimension condition holds locally. Therefore, a Bishop-Gromov volume growth result holds locally, and it implies the space is locally compact.
Then the metric Hopf-Rinow theorem implies that $X$ is proper.
Hence, we can assume that $X$ is compact. Otherwise, we choose an exhaustion of $X$ with compact balls $\overline{B_R(o)}$ such that the optimal transport between measures supported in $B_R(o)$ does not leave $\overline{B_{2R}(o)}$. 
For instance, compare with the proof of Theorem 5.1 in \cite{bast}. Similar as in the proof of Proposition \ref{branigan} one can also see that a measure contraction property holds locally. Then, the result of \cite{cavallettihuesmann} implies
uniqueness of $L^2$-Wasserstein geodesics.
\smallskip\\
By compactness of $X$ there is $\lambda \in (0,\diam_{\sX})$, finitely many disjoints sets
$L_1,\dots, L_k$ that cover $X$ and have non-zero measure, and finitely many open sets $M_1,\dots, M_k$ such that $B_{\lambda}(L_i)\subset M_i$ for $i\in \left\{1,\dots k\right\}$ and such that
(\ref{curvaturedimension}) holds in $M_i$ for each $i$ (for instance, see the proof of Theorem 5.1 in \cite{bast}). 
\smallskip\\
Let $\mu_0,\mu_1\in \mathcal{P}_2(X,\m_{\sX})$ be arbitrary and let $\mu_t$ be the $L^2$-Wasserstein geodesic between $\mu_0$ and $\mu_1$. Consider $\mu_{\bar{t}}$ and $\mu_{\bar{s}}$ such that 
$\bar{s}-\bar{t}\leq \lambda/\diam_{\sX}$. We define $\nu_{\tau}=\mu_{(1-\tau)\bar{t}+\tau \bar{s}}$ is a geodesic between $\mu_{\bar{t}}$ and $\mu_{\bar{s}}$, and any transport geodesic has length less than
$\lambda$. $\Pi$ denotes the optimal dynamical transference plan that corresponds to $\nu_t$.
We decompose $\nu_0$ with respect to $(L_i)_{i=1,\dots,k}$ as follows
\begin{align*}
\nu_0=\sum_{i=1}^{k}\frac{1}{\nu_0(L_i)}\nu_0|_{L_i}=\sum_{i=1}^{k}\nu_0^i.
\end{align*}
Define $\mathcal{L}_i=\left\{\gamma\in \mathcal{G}(X):\gamma(0)\in L_i\right\}$ with $\nu_0(L_i)=\Pi(\mathcal{L}_i)$. The restriction property of optimal transport yields that $\Pi^i=\Pi(\mathcal{L}_i)^{-1}\Pi|_{\mathcal{L}_i}$ 
are optimal dynamical couplings between $\nu_0^i$ and $\nu_1^i=(e_1)_*\Pi^i$ and $\Pi^i$ induces a geodesic $\nu_{\tau}^i$ between $\nu^i_0$ and $\nu^i_1=(e_1)_*\Pi^i$.
By construction $\nu_1^i$ is supported in $M_i$. Hence, the condition $CD(\VK,N)$ implies
\begin{align*}
{\varrho}_t^i(\gamma(t))^{-\frac{1}{N}}\geq \sigma_{\sk^-_{\gamma},\sN}^{\sscr{(1-t)}}(|\dot{\gamma}|){\varrho}_0^i(\gamma(0))^{-\frac{1}{N}}+\sigma_{\sk^+_{\gamma},\sN}^{\sscr{(t)}}(|\dot{\gamma}|){\varrho}_1^i(\gamma(1))^{-\frac{1}{N}}
\end{align*}
for $\Pi^i$-a.e. $\gamma\in \mathcal{G}(X)$ where $\varrho_t^id\m_{\sX}=d\nu_t^i$. In particular, $\nu_t$ is abslutely continuous with density $\varrho_t=\sum_{i=1}^k\varrho_t^i$.
\smallskip\\
The measures $\nu_0^i$ are disjoint. Therefore, the measures $\nu_t^i$ for $i=1,\dots,k$ are disjoint for any $t\in [0,1)$ (see for instance Lemma 2.6 in \cite{bast}). 
Since any optimal transport between absolutely continuous probability measures is induced by an optimal map, we can conclude that also $\nu_1^i$ are disjoint.  
Therefore, for any $t\in [0,1]$
\begin{align}
\varrho_t(x)^{-\frac{1}{N}}= \sum_{i=1}^k \frac{1}{\Pi(\mathcal{L}_i)}\varrho_t^i(x)^{-\frac{1}{N}}
\end{align}
where $\varrho_t^i d\m_{\sX}=d\nu_t^i$. Hence
\begin{align*}
{\varrho}_t(\gamma(t))^{-\frac{1}{N}}\geq \sigma_{\sk^-_{\gamma},\sN}^{\sscr{(1-t)}}(|\dot{\gamma}|){\varrho}_0(\gamma(0))^{-\frac{1}{N}}+\sigma_{\sk^+_{\gamma},\sN}^{\sscr{(t)}}(|\dot{\gamma}|){\varrho}_1(\gamma(1))^{-\frac{1}{N}}
\ \ \mbox{for $\Pi$-a.e. $\gamma\in \mathcal{G}(X)$.}\end{align*}
In particular, the previous argument holds for each $\bar{s},\bar{t}\in [0,1]\cap \mathbb{Q}$. Thus, if $\mu_t$ is the unique geodesic between $\mu_0,\mu_1$ and $\Pi$ is the corresponding optimal dynamical plan, we showed that
\begin{align*}
{\rho}_{\tau(t)}(\gamma(\tau(t)))^{-\frac{1}{N}}\geq \sigma_{\sk^-_{\gamma},\sN}^{\sscr{(1-\tau(t))}}((s-t)|\dot{\gamma}|){\rho}_t(\gamma(t))^{-\frac{1}{N}}+\sigma_{\sk^+_{\gamma},\sN}^{\sscr{(\tau(t))}}((s-t)|\dot{\gamma}|){\rho}_s(\gamma(s))^{-\frac{1}{N}}
\end{align*}
for $\Pi$-a.e. geodesic $\gamma$ and each $\bar{t},\bar{s}\in[0,1]\cap\mathbb{Q}$ where $\tau(t)=(1-t)\bar{t}+t\bar{s}$. If we pick such a geodesic $\gamma$, the inequality holds also globally along $\gamma$ for $\rho_t$ by Corollary \ref{central2}.
Then the result follows.
\end{proof}

\small{
\bibliography{new}

\bibliographystyle{amsalpha}
}
\end{document}